\newcommand{\N}{\mathbb{N}}
\theoremstyle{definition}
\newtheorem{theorem}{Theorem}
\newtheorem{proposition}[theorem]{Proposition}
\newtheorem{corollary}[theorem]{Corollary}
\newtheorem{lemma}[theorem]{Lemma}
\newtheorem{claim}{Claim}[theorem]
\theoremstyle{definition}
\newtheorem{problem}[theorem]{Problem}
\newtheorem{conjecture}[theorem]{Conjecture}
\newcommand{\ra}{\rightarrow}
\newcommand{\dic}{\vec{\chi}}
\DeclareMathOperator{\mad}{mader}
\DeclareMathOperator{\dist}{dist}
\DeclareMathOperator{\cc}{cc}
\newenvironment{subproof}{\par\noindent {\it Subproof}.\ }{\hfill$\lozenge$\par\vspace{11pt}}
\title{Subdivisions  in digraphs of large out-degree or large dichromatic number\thanks{This work was supported by ANR under contract STINT ANR-13-BS02-0007}}
\author[1]{Pierre Aboulker}
\author[2]{Nathann Cohen}
\author[1]{Fr\'ed\'eric Havet}
\author[1,3]{William Lochet}
\author[4,3]{Phablo~F.~S.~Moura\thanks{Supported by FAPESP Proc.~2013/19179-0, 2015/11930-4}}
\author[3]{ St\'ephan Thomass\'e}
\affil[1]{Université Côte d’Azur, CNRS, Inria, I3S, France}
\affil[2]{CNRS, Université Paris Sud, France}
\affil[3]{LIP, ENS de Lyon, CNRS, Université de Lyon, France}
\affil[4]{Instituto de Matemática e Estatística, Universidade de São Paulo, Brazil}
\date{\today}
\begin{document}

\maketitle

\begin{abstract}
%

In 1985, Mader conjectured the existence of a function~$f$ such that every digraph with minimum out-degree at least $f(k)$ contains a subdivision of the transitive tournament of order $k$. 
This conjecture is still completely open, as the existence of $f(5)$ remains unknown. 
In this paper, we show that if~$D$ is an oriented path, or an in-arborescence (i.e., a tree with all edges oriented towards the root)
or the union of two directed paths from $x$ to $y$ and a directed path from $y$ to $x$, then every digraph with minimum out-degree large enough contains a subdivision of~$D$.
Additionally, we study Mader's conjecture considering another graph parameter.
The \emph{dichromatic number} of a digraph $D$ is the smallest integer $k$ such that $D$ can be partitioned into $k$ acyclic subdigraphs. 
We show that any digraph with dichromatic number greater than $4^m (n-1)$ contains every digraph with $n$ vertices  and $m$ arcs  as a subdivision. 
\end{abstract}

\section{Introduction}
Mader~\cite{Mad67} established the following.

\begin{theorem} [Mader~\cite{Mad67}]\label{thm:mader-undirected}
There exists an integer~$g(k)$ such that every graph with minimum degree at least $g(k)$ contains a subdivision of $K_k$. 
\end{theorem}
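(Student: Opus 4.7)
The plan is the classical two-step approach: first reduce the problem to finding a subdivision inside a highly connected subgraph, and then build the $K_k$-subdivision greedily using Menger's theorem.

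First, I would reduce to the case of large minimum degree. A standard averaging argument shows that any graph $G$ with average degree at least $2d$ contains a subgraph with minimum degree at least $d$: while some vertex has degree less than $d$, delete it; since each deletion removes fewer than $d$ edges for one vertex, the ratio $|E|/|V|$ never decreases, so the process terminates on a nonempty subgraph whose minimum degree is at least $d$. Hence, choosing $g(k)$ large enough, we may assume from the start that $G$ is a graph of very large minimum degree.

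Second, I would upgrade minimum degree to high connectivity. The goal is to produce a subgraph $H\subseteq G$ that is $\binom{k}{2}$-connected (still with many vertices and reasonably large minimum degree), so that we can later invoke Menger's theorem $\binom{k}{2}$ times. This can be achieved by another Mader-style extremal argument: pick a subgraph $H$ minimizing $|V(H)|$ subject to a suitable lower bound on the edge-to-vertex ratio. One shows that any small vertex cut in $H$ would yield a smaller side with the same density, contradicting minimality; hence $H$ is $\binom{k}{2}$-connected.

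Third, I would build the subdivision in $H$. Choose any $k$ candidate branch vertices $v_1,\dots,v_k$ in $H$ and process the $\binom{k}{2}$ pairs $(v_i,v_j)$ one at a time, each time adding an internally disjoint $v_iv_j$-path. When searching for the next path, the forbidden vertices are the (at most $\binom{k}{2}-1$) interior vertices of the already constructed paths together with the $k-2$ remaining branch vertices. Applying Menger's theorem to $H$ minus those forbidden vertices, one obtains a $v_iv_j$-path provided the connectivity of $H$ exceeds this forbidden-set size, which is guaranteed by our choice of connectivity bound. Large minimum degree at each $v_i$ ensures that each branch vertex has enough private neighbors to serve as initial edges of its incident paths.

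The main obstacle is the second step: controlling the trade-off between degree, connectivity, and the blow-up of $g(k)$. The edge-deletion argument used to extract a highly connected subgraph is subtle because one must choose the density parameter carefully so that minimality forces high connectivity rather than just high minimum degree. An alternative would be to first build a $K_k$-minor via a cleaner density argument (Bollob\'as--Thomason, Koml\'os--Szemer\'edi) and then convert the minor to a topological minor using the large degree of contracted branch sets; the passage from minor to topological minor is the delicate point of that route.
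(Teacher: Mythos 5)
The paper cites Theorem~\ref{thm:mader-undirected} from Mader~\cite{Mad67} without proof, so there is no in-paper argument to compare against; I will assess your sketch on its own terms. The first two steps (averaging to get large minimum degree, then Mader's extremal lemma that large average degree forces a highly connected subgraph) are standard and fine. The gap is in step three. You say the forbidden set when routing the pair $(v_i,v_j)$ consists of ``the (at most $\binom{k}{2}-1$) interior vertices of the already constructed paths'' together with the $k-2$ other branch vertices; but there are at most $\binom{k}{2}-1$ \emph{paths}, not interior vertices, and each path returned by Menger can be arbitrarily long. The forbidden set is therefore unbounded a priori, can exceed $\kappa(H)$, and the greedy one-pair-at-a-time Menger step simply fails.

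The standard repair is to abandon the sequential scheme and instead prove that sufficiently high connectivity implies high \emph{linkedness} (Larman--Mani, Jung, or, with the sharp constant, Bollob\'as--Thomason: $22k$-connected implies $k$-linked). One then picks $k$ branch vertices with pairwise disjoint private neighbourhoods of size $k-1$ each, designates one private neighbour of $v_i$ for every $j\ne i$, and uses a single $\binom{k}{2}$-linkage to route all the required internally disjoint paths simultaneously. That linkage theorem is precisely the hard ingredient and is absent from your outline. You identify step two (extracting a highly connected subgraph) as the main obstacle, but that is in fact a short, clean extremal argument; the real subtlety is the linkage step. For completeness: Mader's own 1967 proof, reproduced e.g.\ in Diestel's textbook, avoids connectivity and linkedness entirely and runs a different double induction, yielding only $g(k)\le 2^{\binom{k}{2}}$; the connectivity-plus-linkage route you sketch, once patched with the linkage theorem, is the Bollob\'as--Thomason / Koml\'os--Szemer\'edi approach that gives the optimal order $g(k)=\Theta(k^2)$.
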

 For $k\leq 4$, we have $g(k)=k-1$ as first proved by Dirac~\cite{Dirac60}; for $k=5$, we have the estimate $6\leq g(5)\leq 7$ by Thomassen~\cite{Thomassen74,Thomassen96}. In general, the order of growth of $g(k)$ is $k^2$ as shown in
\cite{BoTh98} and \cite{KoSz96}.

Similarly, it would be interesting to find analogous results for digraphs.
However, the obvious analogue that a digraph with sufficiently large minimum in- and out-degree contains a subdivision of the complete digraph of order $n$ is
false as shown by Mader~\cite{Mader85}.

Let $\gamma$ be a digraph parameter. 
A digraph $F$ is $\gamma$-maderian if there exists a least integer
$\mad_{\gamma}(F)$ such that every digraph $D$ with $\gamma(D)\geq \mad_{\gamma}(F)$ contains a subdivision of $F$. 

For a digraph $D$, $\delta^+(D)$ (resp.
$\delta^-(D)$) denote the minimum out-degree (resp.~in-degree) and $\delta^0(D)=\min\{\delta^+(D),\delta^-(D)\}$.
A natural question is to ask which digraphs $F$ are {\it $\delta^+$-maderian} (resp. {\it $\delta^0$-maderian}). 
Observe that every $\delta^+$-maderian digraph is also $\delta^0$-maderian and that
$\mad_{\delta^+}  \geq \mad_{\delta^0}$.

For each positive integer~$k$, we denote by~$[k]$ the subset of the natural numbers~$\{1, \ldots, k\}$.

\medskip

On the positive side, Mader conjectured that every acyclic digraphs is $\delta^+$-maderian.
Since every acyclic digraph is the subdigraph of the transitive tournament on the same order, it is enough to prove that transitive tournaments
are $\delta^+$-maderian.

\begin{conjecture}[Mader~\cite{Mader85}]\label{mader}
There exists a least integer $\mad_{\delta^+}(TT_k)$ such that every digraph $D$ with $\delta^+(D) \ge \mad_{\delta^+}(TT_k)$ contains a subdivision of $TT_k$.
\end{conjecture}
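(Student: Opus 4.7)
The plan is to combine an inductive attack on $k$ with the dichromatic-number theorem announced in the abstract, which guarantees a subdivision of any $n$-vertex, $m$-arc digraph once the dichromatic number exceeds $4^{m}(n-1)$. Applied to $TT_k$ this would reduce Mader's conjecture to an Erd\H{o}s--Neumann-Lara type implication: sufficiently large $\delta^+$ must force dichromatic number above $4^{\binom{k}{2}}(k-1)$. Since that implication is itself a well-known open problem, I would pursue in parallel a direct inductive proof and keep the dichromatic route as a safety net that at least handles restricted classes (e.g.\ tournaments, or digraphs of bounded independence number).

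Next I would perform the standard density reduction: by iteratively deleting vertices of small in-degree one extracts from any digraph of very large $\delta^+$ a subdigraph with both $\delta^+$ and $\delta^-$ large, since the discarded arcs account only for the lost in-degree budget. This places us in the symmetric situation where plenty of arcs are available in both directions, the natural habitat for Mader-type arguments. I would then set up induction on $k$; the cases $k \leq 4$ are known via results of Thomassen. For the inductive step, assume a subdivision of $TT_{k-1}$ has been located with branch vertices $v_1,\dots,v_{k-1}$, and try to attach a new top branch vertex $v_0$ with $k-1$ internally disjoint directed paths $v_0 \to v_i$ avoiding the internal vertices of the existing subdivision. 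I would build $v_0$ and these paths by rooting an out-branching at a vertex of high in-degree and applying a Ramsey-type argument on its leaves to select $k-1$ whose ancestors back to the root are pairwise internally disjoint and terminate at the prescribed $v_i$.

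The main obstacle, and the reason the conjecture has resisted for more than thirty years, is the lack of a directed analogue of the classical link between minimum degree and connectivity: a graph of large minimum degree contains a highly connected subgraph, whence Menger yields the internally disjoint paths demanded above, but no such statement is available in the digraph setting, and large $\delta^+$ does not force large strong connectivity in any subdigraph. Consequently the extension step, already nontrivial for $k=5$, is precisely where a direct proof stalls. My proposal therefore places the real burden on developing such a \textbf{directed Mader connectivity lemma}: showing that every digraph of sufficiently large $\delta^+$ contains a subdigraph in which many internally disjoint directed paths exist between any prescribed source and a prescribed set of targets. Any partial result in this direction (even for $k=5$, or restricted to tournaments) would already be a substantial advance, and would most likely supply the missing piece of the induction.
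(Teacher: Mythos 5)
This is a conjecture (Mader, 1985), not a theorem: the paper does not prove it, and explicitly says it is completely open, with even the existence of $\mad_{\delta^+}(TT_5)$ unknown. So there is no ``paper's proof'' to compare against, and what you have written is, as you acknowledge, a research plan rather than a proof. That said, one of the steps you treat as routine is in fact false, and it is precisely the obstruction that makes the directed problem so much harder than the undirected one.

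Your ``standard density reduction'' claims that from any digraph with very large $\delta^+$ one can extract, by repeatedly deleting vertices of small in-degree, a subdigraph with both $\delta^+$ and $\delta^-$ large. This is not true, and the reason is exactly the asymmetry the paper highlights when distinguishing $\delta^+$-maderian from $\delta^0$-maderian. In the undirected setting, deleting a vertex of degree below half the average does not decrease the average degree, so iteration terminates with a subgraph of large minimum degree. In the directed setting, deleting a vertex of small in-degree destroys out-arcs of its in-neighbours, so $\delta^+$ is not preserved; the analogous averaging argument only gives a subdigraph in which every vertex has large in-degree \emph{or} large out-degree, not both. If your reduction were correct, Conjecture~\ref{mader} would immediately follow from Conjecture~\ref{mader-bis}, which is trivial for oriented trees (a simple greedy argument places any oriented tree in a digraph with $\delta^0\geq k$) and known in several further cases; the paper instead shows the two conjectures are equivalent only by the non-obvious trick of pasting $D$ onto its converse, and explicitly stresses that this does \emph{not} show a $\delta^0$-maderian digraph is $\delta^+$-maderian. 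Likewise, your ``directed Mader connectivity lemma'' cannot exist in the form you want: large $\delta^+$ (or even large $\delta^0$) does not force a subdigraph of large strong connectivity, as Mader himself showed and as the paper records via the counterexamples of Thomassen and DeVos et al.\ (digraphs of large $\delta^0$ without four internally disjoint paths of prescribed orientation between any vertex pair). Any serious attack on the conjecture has to route around these facts rather than through them.
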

Mader proved that $\mad_{\delta^+}(TT_4) = 3$, but even the existence of $\mad_{\delta^+}(TT_5)$ is still open.
 
This conjecture implies directly that transitive tournaments (and thus all acyclic digraphs) are $\delta^0$-maderian.

\begin{conjecture}\label{mader-bis}
There exists a least integer $\mad_{\delta^0}(TT_k)$ such that every digraph $D$ with $\delta^0(D) \ge \mad_{\delta^0}(TT_k)$ contains a subdivision of $TT_k$.
\end{conjecture}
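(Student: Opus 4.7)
The plan is to observe that Conjecture \ref{mader-bis} is a formal consequence of Conjecture \ref{mader}: since $\delta^0(D) \geq x$ implies $\delta^+(D) \geq x$, any digraph satisfying the hypothesis of \ref{mader-bis} also satisfies that of \ref{mader}. Hence $\mad_{\delta^0}(TT_k) \leq \mad_{\delta^+}(TT_k)$ whenever the right-hand side is well defined, so the existence of the former is inherited from the existence of the latter. This derivation requires essentially nothing beyond unpacking the definitions, and I would not expect more than a one-line justification to be needed.

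For an unconditional attack that does not pass through Mader's conjecture, the natural route is induction on $k$. Supposing $\mad_{\delta^0}(TT_{k-1})$ exists, take a digraph $D$ with $\delta^0(D)$ very large, extract a subdivision $H$ of $TT_{k-1}$, and try to augment $H$ to a subdivision of $TT_k$ by appending a new terminal branch vertex $v$ together with $2(k-1)$ internally disjoint directed paths linking $v$ to and from the existing branch vertices of $H$. The lower bound on $\delta^-$, absent from Mader's original formulation, is precisely what provides a large pool of candidate in-neighbours of $v$ and thus enough flexibility to route paths \emph{into} $v$; this is the concrete leverage that an independent proof of \ref{mader-bis} could exploit. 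An orthogonal route is through the dichromatic number: the theorem announced in the abstract gives $\mad_{\dic}(TT_k) \leq 4^{\binom{k}{2}}(k-1)$, so it would suffice to show that large $\delta^0$ forces a subdigraph of large $\dic$. This reduction does not obviously help, however, since the implication \emph{large $\delta^0 \Rightarrow$ large $\dic$} is itself open and likely at least as hard as Conjecture \ref{mader-bis}.

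The main obstacle is of course the status of Mader's conjecture: since even $\mad_{\delta^+}(TT_5)$ is unknown, an unconditional proof of \ref{mader-bis} for all $k$ would constitute genuine progress on \ref{mader} itself. For that reason, I expect the authors record \ref{mader-bis} only as an immediate corollary of \ref{mader}, with no independent argument, and devote the paper to the subclasses $D$ of acyclic digraphs (oriented paths, in-arborescences, etc.) that are announced in the abstract as being $\delta^+$-maderian unconditionally.
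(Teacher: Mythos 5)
You correctly recognize that this statement is a conjecture, not a theorem, and that the paper merely records it as an immediate consequence of Conjecture~\ref{mader} via the trivial implication that $\delta^0(D) \geq x$ forces $\delta^+(D) \geq x$, so $\mad_{\delta^0}(TT_k) \leq \mad_{\delta^+}(TT_k)$. The only additional content in the paper---not needed for your derivation---is the reverse implication (hence equivalence of the two conjectures), obtained by joining disjoint copies of a digraph $D$ and its converse $\overline{D}$ with all arcs directed from $\overline{D}$ to $D$.
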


In fact, Conjecture~\ref{mader-bis} is equivalent to Conjecture~\ref{mader} because if transitive tournaments are $\delta^0$-maderian, 
then $\mad_{\delta^+}(TT_k)\leq \mad_{\delta^0}(TT_{2k})$ for all $k$. 
Indeed, let $D$ be a digraph with minimum out-degree $\mad_{\delta^0}(TT_{2k})$, and let $D'$ be the digraph obtained from disjoint copies 
of $D$ and its converse (the digraph obtained by reversing all arcs) $\overline{D}$  by adding all arcs from $\overline D$ to $D$.
Clearly, $\delta^0(D')\geq \mad_{\delta^0}(TT_{2k})$. Therefore $D'$ contains a subdivision of $TT_{2k}$.
Hence, either $D$ or $\overline{D}$ (and so $D$) contains a subdivision of~$TT_k$.

Both conjectures are equivalent, but the above reasoning does not prove that a $\delta^0$-maderian digraph is also $\delta^+$-maderian. 
The case of {\it oriented trees} (i.e. orientations of undirected trees) is typical. 
Using a simple greedy procedure, one can easily find every oriented tree of order~$k$ in every digraph with minimum in- and out-degree~$k$ 
(so $\mad_{\delta^0}(T)=|T|-1$  for any oriented tree $T$).
On the other hand, it is still open whether oriented trees are $\delta^+$-maderian and a natural important step towards 
Conjecture~\ref{mader} would be to prove the following weaker one.

\begin{conjecture}\label{conj:tree}
Every oriented tree is $\delta^+$-maderian.
\end{conjecture}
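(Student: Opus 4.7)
The plan is to prove Conjecture~\ref{conj:tree} by induction on the order $n$ of the oriented tree $T$, building on the two base cases announced in the abstract: oriented paths and in-arborescences are already $\delta^+$-maderian. Pick a leaf $v$ of $T$, let $u$ be its unique neighbor, and set $T' = T - v$. By induction there is a threshold $f(T')$ such that every digraph $D$ with $\delta^+(D)\geq f(T')$ contains a subdivision of $T'$.

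\emph{Easy case: $v$ is a sink-leaf (arc $uv$ in $T$).} Taking $\delta^+(D)\geq f(T')+n$ ensures that after finding a subdivision $S$ of $T'$ in $D$, the image of $u$ still has an out-neighbor outside $V(S)$; using it as the image of $v$ yields a subdivision of $T$. Consequently we may assume $T$ has \emph{no} sink-leaves, i.e.\ every leaf of $T$ is a source-leaf. This is the hard case, because re-attaching a source-leaf $v$ requires producing a fresh \emph{in-neighbor} (or, more generally, a fresh in-path) of the image of $u$, and the hypothesis controls only the out-degree.

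\textbf{Strengthened inductive hypothesis.} To handle the hard case, I would replace the bare hypothesis by the following: for every oriented tree $T'$, every distinguished vertex $w\in V(T')$, and every integer $k$, there exists $g(T',w,k)$ such that every digraph $D$ with $\delta^+(D)\geq g(T',w,k)$ contains a subdivision $S$ of $T'$ together with $k$ internally vertex-disjoint paths ending at the image of $w$, each starting outside $V(S)$ and otherwise avoiding $S$. Granting this strengthening, attach the missing source-leaf $v$ via one of the $k$ reserved in-paths at the image of $u$; the scheme extends at once to the attachment of an entire in-arborescence of source-leaves in one step, which is precisely what remains to do after pruning all easy leaves.

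\textbf{Main obstacle.} The serious difficulty is propagating the strengthened hypothesis through the induction. The in-arborescence result corresponds to the case $T'=\{w\}$, but it produces a large in-arborescence \emph{somewhere} in $D$, not rooted at a prescribed vertex. Bridging this gap is the heart of the problem. A plausible route is a two-layer scheme: first apply the in-arborescence result to obtain a large in-arborescence $A_0\subseteq D$ rooted at some vertex $r$; then locate a dense subdigraph $D^*$ sitting ``above'' $r$ (reachable through $A_0$) on which one recursively builds a subdivision of $T'$ with $w$'s image coerced to be $r$, while reserving $k$ branches of $A_0$ as the required in-paths. Controlling the out-degree budget lost at each extraction, and forcing the image of $w$ to land on the chosen root without sacrificing the rest of the subdivision, are where the technical work---and the principal obstruction---will concentrate.
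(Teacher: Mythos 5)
The statement you were asked to prove is labelled \emph{Conjecture}~\ref{conj:tree} in the paper; the paper does not prove it, and to the best of my knowledge it is open. The paper only offers ``evidences'': oriented paths are $\delta^+$-maderian (Theorem~\ref{paths}, with the sharp value $\mad_{\delta^+}(P)=|V(P)|-1$), out-arborescences are $\delta^+$-maderian by the trivial greedy argument, and in-arborescences are $\delta^+$-maderian (Theorem~\ref{inTrees}, via a delicate packing/Menger argument). There is therefore no ``paper's proof'' to compare against.

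Your write-up is honest on this point: you do not claim a proof, and you correctly identify the genuine obstruction. Your analysis of the two leaf types is exactly right and aligns with the paper's own framing. A sink-leaf is attachable for free because the out-degree hypothesis directly supplies a fresh out-neighbour of the image of $u$ (this is why out-arborescences are trivial). A source-leaf is the problem, because the hypothesis gives no control over in-degrees; this is why the paper devotes Theorem~\ref{inTrees} to the special case where \emph{every} leaf is a source-leaf. Your proposed strengthening --- find a subdivision of $T'$ plus $k$ reserved in-paths terminating at a prescribed vertex --- is a reasonable shape for an inductive invariant, but you then point to the real gap yourself: Theorem~\ref{inTrees} produces a large in-arborescence \emph{somewhere} in $D$, with no control over where the root lands, and there is no known way to force the root onto a prescribed vertex while retaining an out-degree budget for the rest of the construction. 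That ``coercion'' step is precisely what is missing, and no argument in the paper (nor any standard tool such as Lemma~\ref{lemma:path_system}) closes it. So the proposal is a correct problem analysis and a plausible research direction, not a proof; given that the target is an open conjecture, that is the appropriate outcome, and you should be explicit in your write-up that what you have is a reduction of the conjecture to the stated strengthened hypothesis, not a resolution of it.
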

We give evidences to this conjecture.
First, in Subsection~\ref{subset:path}, we prove that every {\it oriented path} (i.e. orientation of an undirected path) $P$ is $\delta^+$-maderian and that $\mad_{\delta^+}(P)=|V(P)|-1$.
Next, in Subsection~\ref{subsec:inarb}, we  consider arborescences. An {\it out-arborescence} (resp. {\it in-arborescence}) is an  oriented tree in which all arcs are directed away from (resp. towards) a vertex called the {\it root}.
Trivially, the simple greedy procedure shows that $\mad_{\delta^+}(T)=|T|-1$ for every out-arborescence.
In contrast, the fact that in-arborecences are $\delta^+$-maderian is not obvious since we have no control on the in-degree of each vertex in a digraph of out-degree at least $k$. We show in Theorem~\ref{inTrees} that the in-arborescences are $\delta^+$-maderian.

In \cite{Mader95}, Mader gave another partial result towards Conjecture~\ref{mader}. He proved the existence of a function $f_1(k)$ such that every digraph $D$ with $\delta^+(D) \geq f_1(k)$ contains a pair
of vertices $u$ and $v$ with $k$ disjoint directed paths between $u$ and $v$. 
It is however not known if we can insist on 
these paths to be arbitrarily long. In Theorem \ref{C(k,k)}, we do the first step toward this question by proving it in the case $k = 2$.

\medskip

Conjecture~\ref{mader} states that the acyclic digraphs are $\delta^+$-maderian. However, they are not the only ones. For example, it is folklore that every digraph with minimum out-degree at least $1$ contains a directed cycle, which is a subdivision of $\vec{C}_2$, the directed $2$-cycle. More generally, one can easily show, by considering a directed path of maximum length, that every digraph
with minimum out-degree at least $k-1$ contains a directed cycle of length at least $k$. In other words, $\mad_{\delta^+}(\vec{C}_k) = k-1$, where $\vec{C}_k$ denotes the directed $k$-cycle.
Furthermore Alon~\cite{Alon96} showed that every digraph with minimum out-degree at least $64k$ contains $ k $ disjoint directed cycles, which forms a subdivision of the
disjoint union of $k$ copies of~$\vec{C}_2$.
A celebrated conjecture of Bermond and Thomassen~\cite{BeTh81} states that the bound $64k$ can be decreased to~$2k-1$.

\begin{conjecture}[Bermond and Thomassen~\cite{BeTh81}]\label{Bermond Thomassen}
For every positive integer $ k $, every digraph with minimum out-degree at least $ 2k-1 $ contains $ k $ disjoint directed cycles.
\end{conjecture}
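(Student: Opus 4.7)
The natural plan is induction on $k$. The base case $k=1$ is the classical fact that every digraph with minimum out-degree at least $1$ contains a directed cycle, obtained by following any maximal directed path to its endpoint. For the inductive step, given $D$ with $\delta^+(D) \geq 2k-1$, I would try to find a single directed cycle $C$ such that the induced subdigraph $D - V(C)$ still satisfies $\delta^+ \geq 2k-3$, and then apply the induction hypothesis on $k-1$. This translates into the requirement that every vertex outside $V(C)$ has at most two out-neighbours on $C$; call such a cycle \emph{light}.

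The key step is therefore to construct a light cycle. A natural first attempt is to take $C$ to be a shortest directed cycle: it has no chords, and one can hope that any external vertex $x$ with three or more out-neighbours on $C$ yields a strictly shorter cycle by routing through $x$ between two of its closest out-neighbours on $C$. More robustly, $C$ would be chosen to minimise $|C|$ with a secondary tiebreak on the number of ``heavy'' external vertices (those with at least three out-neighbours on $C$), and each possible offending configuration would then be ruled out by a local modification argument producing a strictly better cycle.

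The main obstacle is exactly this extremal step, and it is what has kept the conjecture open for all $k \geq 4$; only the cases $k \leq 2$ (Thomassen) and $k=3$ (Lichiardopol and coauthors, by a long case analysis) are currently known. The rerouting idea fails in general because the shortcut $v_a \to x \to v_b$ need not shorten $C$ when $x$'s out-neighbours are spread around $C$, and worse, one can exhibit sparse configurations where every short cycle has many external vertices sending three out-arcs in. A plausible escape route is a two-regime hybrid: use an Alon-type bound $\delta^+(D) \geq ck$ in a ``dense'' regime to extract many cycles at once via counting, and in the complementary ``sparse'' regime exploit the sparsity---perhaps through a directed Erd\H{o}s--P\'osa-style argument, or via the dichromatic number machinery developed later in this paper---to locate a light cycle and then feed the induction.
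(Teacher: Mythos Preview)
This statement is a \emph{conjecture} in the paper, not a theorem: the paper does not prove it and does not attempt to. It is quoted as a well-known open problem of Bermond and Thomassen, mentioned only to contextualise Alon's $64k$ bound. There is therefore no ``paper's own proof'' to compare your proposal against.

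Your proposal is not a proof either, and to your credit you say so explicitly: you outline the natural induction-on-$k$ scheme, identify the need for a \emph{light} cycle (one whose removal drops $\delta^+$ by at most~$2$), and then correctly point out that the extremal/shortest-cycle argument for producing such a cycle is exactly where the approach breaks down in general, with only $k\le 2$ (Thomassen) and $k=3$ (Lichiardopol, Naserasr, Sopena) settled. Your diagnosis of the obstacle is accurate; the speculative ``two-regime hybrid'' in the last paragraph is just that---speculation---and does not constitute a proof strategy. In short: the paper offers no proof, your write-up offers no proof, and both are consistent with the current status of the conjecture.
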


In~\cite{Alon96}, Alon also conjectured the following.

\begin{conjecture}[Alon~\cite{Alon96}]
There exists a function $h$ such that every digraph with minimum out-degree $h(k)$ has a partition $(V_1,V_2)$ such that, for $i=1,2$, $D\langle V_i \rangle$ has minimum out-degree $k$.
\end{conjecture}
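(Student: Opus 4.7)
The plan is to attack Alon's conjecture by a probabilistic argument based on the Lovász Local Lemma (LLL), aiming first for an $h$ of order $Ck$ with $C$ a large constant. Sample a uniform random partition $V(D)=V_1\sqcup V_2$, each vertex landing in $V_1$ or $V_2$ independently with probability $1/2$, and for each vertex $v$ let $A_v$ denote the bad event that fewer than $k$ out-neighbors of $v$ lie in the same part as $v$. If $\delta^+(D)\geq Ck$, a standard Chernoff bound gives $\Pr[A_v]\leq 2^{-\Omega(k)}$, and simultaneously avoiding all $A_v$ yields a partition with $\delta^+(D\langle V_i\rangle)\geq k$ for $i=1,2$.

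The event $A_v$ is determined by the coin flips of the vertices in $\{v\}\cup N^+(v)$, so $A_v$ and $A_u$ are mutually independent as soon as these two sets are disjoint; this fails precisely when $u\in\{v\}\cup N^+(v)\cup N^-(v)$ or when $u$ and $v$ share a common out-neighbor. By the symmetric LLL, it would suffice to bound the dependency degree of each $A_v$ by $2^{\Omega(k)}-1$.

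The main obstacle sits exactly here: the dependency degree of $A_v$ is bounded by $d^+(v)+d^-(v)+\sum_{w\in N^+(v)} d^-(w)$, but the hypothesis $\delta^+(D)\geq h(k)$ places no constraint at all on in-degrees, so a single vertex of enormous in-degree can destroy the LLL argument. The bulk of the proof effort would therefore go into a preprocessing step that replaces $D$ by a sub-digraph $D'\subseteq D$ in which in-degrees are bounded by some $\Delta(k)$ while $\delta^+(D')$ remains of order $k$; a natural attempt is to iteratively delete vertices of excessive in-degree and to control the resulting loss of out-degree by a double-counting argument, although it is not at all obvious that this succeeds. Alternatively, one could replace Chernoff by a Talagrand-type concentration inequality, or use an asymmetric LLL tuned to the true dependency structure. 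This preprocessing (or an adequate substitute for it) is exactly the point at which the directed problem diverges from the undirected Stiebitz theorem, and is, in my view, the reason the conjecture has remained open.
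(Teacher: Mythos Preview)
The statement you are attempting to prove is not a theorem in the paper but a \emph{conjecture} (due to Alon). The paper does not offer any proof; on the contrary, it explicitly remarks that ``the existence of $h(2)$ still remains open.'' So there is no paper proof to compare against, and your proposal should be evaluated on its own merits as a purported resolution of an open problem.

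On those merits, what you have written is not a proof but an honest description of why the natural probabilistic approach fails. The Chernoff-plus-LLL argument you outline is exactly the undirected Stiebitz/Thomassen strategy, and you correctly identify the fatal obstruction: the dependency degree of the bad event $A_v$ involves the in-degrees of $v$ and of its out-neighbours, quantities on which the hypothesis $\delta^+(D)\geq h(k)$ imposes no bound whatsoever. Your proposed fix --- preprocess $D$ into a subdigraph $D'$ with bounded in-degree while retaining $\delta^+(D')=\Omega(k)$ --- is itself a statement at least as strong as the conjecture for the relevant parameter range, and no such preprocessing is known. Iteratively deleting high-in-degree vertices can destroy all out-degree (consider, e.g., a bipartite digraph with all arcs from $A$ to $B$, $|A|$ huge, $|B|$ small: every vertex of $B$ has enormous in-degree, and deleting $B$ leaves an arcless graph). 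The alternative suggestions (Talagrand, asymmetric LLL) do not circumvent the issue either, since the dependency structure can genuinely be complete.

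In short, the gap you yourself flag in the last paragraph is the entire problem, not a technicality to be filled in. Your write-up is a reasonable explanation of \emph{why} the conjecture is hard, but it is not a proof, and the paper makes no claim to have one.
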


The difficulty of this question is remarkable, as the existence  of $h(2)$ still remains open. 
If true, this conjecture implies the following one.

\begin{conjecture}\label{conj:union}
If $F_1$ and $F_2$ are $\delta^+$-maderian, then the disjoint union of $F_1$ and~$F_2$ is also $\delta^+$-maderian.
\end{conjecture}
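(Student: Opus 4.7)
The plan is to deduce Conjecture~\ref{conj:union} from the immediately preceding conjecture of Alon, as suggested by the sentence ``If true, this conjecture implies the following one.'' Since Alon's conjecture is itself open (even for $k=2$), what follows is a conditional reduction rather than an unconditional proof.

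Let $F_1$ and $F_2$ be $\delta^+$-maderian and set $m_i=\mad_{\delta^+}(F_i)$ for $i=1,2$. Assume that Alon's function $h$ exists, and put $k=\max\{m_1,m_2\}$. I would show that any digraph $D$ with $\delta^+(D)\geq h(k)$ contains a subdivision of the disjoint union of $F_1$ and $F_2$. Applying Alon's conjecture to $D$ with parameter $k$, we partition $V(D)$ into $(V_1,V_2)$ with $\delta^+(D\langle V_i\rangle)\geq k\geq m_i$ for both $i$. Since $F_i$ is $\delta^+$-maderian with threshold $m_i$, each induced subdigraph $D\langle V_i\rangle$ contains a subdivision $H_i$ of $F_i$. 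By construction $V(H_1)\subseteq V_1$ and $V(H_2)\subseteq V_2$, so $H_1$ and $H_2$ are vertex-disjoint, and their union is a subdivision of the disjoint union of $F_1$ and $F_2$ sitting inside $D$. This yields the explicit bound $\mad_{\delta^+}(F_1\cup F_2)\leq h(\max\{\mad_{\delta^+}(F_1),\mad_{\delta^+}(F_2)\})$.

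The principal obstacle is that Alon's partition conjecture is itself a notorious open problem: even the existence of $h(2)$ is unknown, so this route gives no unconditional information. An Alon-free proof would need to construct the two subdivisions simultaneously inside $D$. A tempting shortcut is to first extract a subdivision $H_1$ of $F_1$ and then argue that $D-V(H_1)$ still has large minimum out-degree, but this fails because a subdivision can contain arbitrarily many internal vertices along its subdivided arcs, and deleting them may collapse the out-degree of vertices that pointed into $H_1$. A direct approach would therefore presumably have to produce the two subdivisions in parallel, which is essentially the difficulty encoded in Alon's conjecture itself.
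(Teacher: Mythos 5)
This statement is a conjecture in the paper, and the paper offers no proof of it --- the only relevant text is the sentence ``If true, this conjecture implies the following one,'' referring to Alon's partition conjecture. You correctly identified this, and your conditional reduction is exactly the (unwritten) argument the paper has in mind: apply Alon's partition with $k=\max\{\mad_{\delta^+}(F_1),\mad_{\delta^+}(F_2)\}$, find a subdivision of $F_i$ inside $D\langle V_i\rangle$, and observe that disjointness of $V_1,V_2$ makes the two subdivisions disjoint, giving $\mad_{\delta^+}(F_1\cup F_2)\le h(k)$. Your closing remarks are also on point: the conjecture remains open precisely because Alon's conjecture is open and because the naive ``extract $F_1$, then extract $F_2$ from the remainder'' strategy fails, as deleting the (possibly many) subdivision vertices can destroy out-degree. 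So this is a correct and faithful elaboration of the paper's implicit claim, not a proof of the conjecture itself.
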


Partial positive answers to this conjecture can be obtained via the Erd\H{o}s-Pos\'a Property.
For a digraph $F$ and an integer $k$, we denote by $k\times F$ the disjoint union of $k$ copies of $F$.
A digraph $F$ is said to have the {\it Erd\H{o}s-Pos\'a Property} if for every positive integer $k$, there exists $\phi(k)$, such that every digraph $D$ either contains a subdivision of $k\times F$, or has a set $S$ of size (at most) $\phi(k)$ such that $D-S$ contains no subdivision of $F$.

\begin{theorem}\label{thm:erdos_posa_property}
If $F$ is $\delta^+$-maderian and has the Erd\H{o}s-Pos\'a Property, then $k\times F$ is also $\delta^+$-maderian for all positive integer $k$.
\end{theorem}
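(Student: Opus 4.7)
The plan is to show the inequality $\mad_{\delta^+}(k\times F)\le \mad_{\delta^+}(F)+\phi(k)$ directly, where $\phi$ is the Erd\H{o}s-Pos\'a function associated with $F$. So I would fix an integer $k\ge 1$ and take any digraph $D$ with $\delta^+(D)\ge \mad_{\delta^+}(F)+\phi(k)$; the goal is to exhibit a subdivision of $k\times F$ in $D$.

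Apply the Erd\H{o}s-Pos\'a Property of $F$ to $D$ with parameter $k$. Two outcomes are possible. In the first, $D$ already contains a subdivision of $k\times F$, and there is nothing else to prove. In the second, there exists a vertex set $S\subseteq V(D)$ with $|S|\le \phi(k)$ such that $D-S$ contains no subdivision of $F$ at all. The aim will be to derive a contradiction from this second case using the $\delta^+$-maderian assumption on $F$.

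The key observation is the crude but essential out-degree bound: removing a set $S$ of vertices from $D$ decreases the out-degree of each remaining vertex by at most $|S|$, so
\[
\delta^+(D-S)\ \ge\ \delta^+(D)-|S|\ \ge\ \mad_{\delta^+}(F)+\phi(k)-\phi(k)\ =\ \mad_{\delta^+}(F).
\]
By definition of $\mad_{\delta^+}(F)$, this forces $D-S$ to contain a subdivision of $F$, contradicting the choice of $S$. Hence only the first outcome is possible, which proves that $k\times F$ is $\delta^+$-maderian with the stated bound.

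There is no genuine obstacle here; the proof is a one-shot application of the Erd\H{o}s-Pos\'a Property combined with the trivial out-degree estimate for vertex deletion. The only subtle point worth underlining is that one uses the Erd\H{o}s-Pos\'a dichotomy in its strong form (a single hitting set kills \emph{all} subdivisions of $F$ simultaneously), since this is exactly what allows the degree threshold to absorb the loss from deleting $S$. No induction on $k$ is needed, although one could phrase the argument inductively by peeling off one copy of $F$ at a time; the non-inductive version yields the cleaner bound $\mad_{\delta^+}(k\times F)\le \mad_{\delta^+}(F)+\phi(k)$.
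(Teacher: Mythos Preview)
Your proof is correct and essentially identical to the paper's: both fix $\delta^+(D)\ge \mad_{\delta^+}(F)+\phi(k)$, observe that deleting any set $S$ of at most $\phi(k)$ vertices leaves $\delta^+(D-S)\ge \mad_{\delta^+}(F)$ so $D-S$ still contains a subdivision of $F$, and conclude via the Erd\H{o}s--Pos\'a dichotomy that $D$ contains a subdivision of $k\times F$. The only cosmetic difference is that you phrase the second alternative as a contradiction, whereas the paper states directly that no small hitting set exists.
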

\begin{proof}
Let $F$ be a maderian digraph having the Erd\H{o}s-Pos\'a Property for some function $\phi$.
Let $D$ be a digraph with $\delta^+(D)\geq \phi(k) + \mad_{\delta^+}(F)$.
For every set~$S$ of size at most $\phi(k)$, $\delta^+(D-S)\geq \mad_{\delta^+}(F)$, so $D-S$ contains a subdivision of $F$. 
Thus, since $F$ has the  Erd\H{o}s-Pos\'a Property, $D$ contains a subdivision of~$k\times F$.
\end{proof}

Reed et al.~\cite{RRST96} proved that $\vec{C}_2$ has the Erd\"os-Pos\'a Property, and using the Directed Grid Theorem~\cite{KaKr15},  
Akhoondian et al.~\cite{AKKW16} showed that many digraphs have the Erd\"os-Pos\'a Property, 
in particular all  disjoint unions of directed cycles. 
Hence, Theorem~\ref{thm:erdos_posa_property} implies that disjoint unions of directed cycles are $\delta^+$-maderian.

\medskip

On the negative side,
Thomassen~\cite{Thomassen86} showed a construction of digraphs with arbitrarily large in- and out-degree and no directed cycles of even length (see also \cite{DMMS12}).
This gives a large class of digraphs that are  not $\delta^0$-maderian, namely: 
digraphs such that all its subdivisions have a directed cycle of even length. 
In particular, $\vec{K}_3$, the complete digraph on three vertices, belongs to this class. 
All  digraphs in this class have been characterized by Seymour and Thomassen~\cite{SeTh87}.
Morever, Devos~et~al.~\cite{DMMS12}
showed the existence of digraphs with large minimum out-degree and in-degree such that there exists no pair of vertices $u$ 
and $v$ with four internally disjoint directed paths between them, two from $u$ to $v$ and two from $v$ to $u$. 
In Theorem \ref{triplePath}, we show that if the minimum out-degree is large enough then we can find two vertices $u$ and
$v$ with three internally disjoint directed paths between them, one from $u$ to $v$ and two from $v$ to $u$.





Note that every graph with chromatic  number at least~$p$ has a subgraph with minimum degree at least~$p-1$. 
This implies, by Theorem~\ref{thm:mader-undirected}, that every graph with chromatic number at least $g(k)+1$ 
contains a subdivision of $K_k$. 
In the context of digraphs, there exist two natural analogues of the chromatic number.
Given a digraph~$D$, the \emph{chromatic number} of~$D$, denoted by~$\chi(D)$, is simply the chromatic number of its underlying graph. 
The \emph{dichromatic number} of~$D$, denoted by~$\dic (D)$, is the smallest integer~$k$ such that~$D$ admits a $k$-dicolouring. 
A \emph{$k$-dicolouring} is a $k$-partition $\{V_1, \dots , V_k\}$ of $V(D)$ such that~$D\langle V_i \rangle$ is acyclic 
for every $i \in [k]$.
Hence, it is natural to ask which digraphs are $\chi$-maderian and which ones are $\dic$-maderian.

Burr~\cite{Bur80} proved that every $(k-1)^2$-chromatic digraph contains every oriented forest of order $k$.
Later on, Addario-Berry et al.~\cite{AHL+13} slightly improved this value to~$k^2/2-k/2+1$.
This implies that every oriented forest is $\chi$-maderian.
Cohen et al.~\cite{CHLN} showed that for any positive integer $b$, there are digraphs of arbitrarily high chromatic number that contain no oriented cycles with less than $b$ blocks. This directly implies that if a digraph is not an oriented forest, then it is not $\chi$-maderian because it contains an oriented cycle, all subdivisions of which have the same number of blocks. 
\begin{theorem}
A digraph is $\chi$-maderian if and only if it is an oriented forest.
\end{theorem}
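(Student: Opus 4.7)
The plan is to prove the two directions of the equivalence separately, and both essentially reduce to results already cited in the introduction. For sufficiency, the theorem of Addario-Berry et al.\ asserts that every digraph of chromatic number at least $k^2/2 - k/2 + 1$ contains every oriented forest on $k$ vertices as a subdigraph. In particular, every oriented forest $F$ is $\chi$-maderian with $\mad_{\chi}(F) \leq |V(F)|^2/2 - |V(F)|/2 + 1$. No further work is needed for this direction.

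For necessity, I would argue by contrapositive: if a digraph $D$ is not an oriented forest, then it is not $\chi$-maderian. The first step is to extract from $D$ an oriented cycle $C$. Since $D$ is not a forest, its underlying graph contains a cycle, and the arcs of $D$ above this cycle form an oriented cycle $C$ (with some orientation of its edges). The critical observation, already highlighted in the paragraph preceding the theorem, is that subdividing any arc of an oriented cycle does not change its number of blocks; hence, writing $b$ for the block count of $C$, every subdivision of $C$ is again an oriented cycle with exactly $b$ blocks.

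To conclude, I would invoke the result of Cohen et al.\ with parameter $b+1$, obtaining for every integer $N$ a digraph $H_N$ with $\chi(H_N) \geq N$ whose oriented cycles all have at least $b+1$ blocks. Then $H_N$ contains no oriented cycle with $b$ blocks, hence no subdivision of $C$, and a fortiori no subdivision of $D$, because any subdivision of $D$ restricted to the vertices and subdivision-paths corresponding to $C$ would be a subdivision of $C$. Since $N$ is arbitrary, this shows that $D$ is not $\chi$-maderian.

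The main (and essentially only) subtlety is the block-preservation observation for subdivisions of oriented cycles, together with the fact that a subdivision of $D$ automatically contains a subdivision of every subdigraph of $D$. Both are immediate once one distinguishes clearly between an \emph{oriented cycle} (an orientation of an undirected cycle, which may have several blocks) and a \emph{directed cycle} (a single block), so no real obstacle is expected.
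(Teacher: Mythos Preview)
Your proposal is correct and follows exactly the argument the paper sketches in the paragraph immediately preceding the theorem: sufficiency via Burr/Addario-Berry et al., and necessity via Cohen et al.\ together with the observation that subdividing an oriented cycle preserves its number of blocks. The paper gives no further proof beyond that paragraph, so your write-up is simply a fleshed-out version of the same reasoning (the only cosmetic point is that if $D$ contains a digon its underlying \emph{simple} graph need not have a cycle, but then any acyclic tournament witnesses non-$\chi$-maderianness, and the paper's sketch is equally informal on this).
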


The $\chi$-maderian digraphs are known but determining $\mad_{\chi}$ for such digraphs is still open.
Burr~\cite{Bur80} made the following conjecture.

\begin{conjecture}[Burr~\cite{Bur80}]\label{conj:Burr}
Every digraph with chromatic number $2k-2$ contains every oriented tree of order $k$ as a subdigraph.
\end{conjecture}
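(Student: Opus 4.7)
The plan is to attack Burr's conjecture by induction on $k$. The base cases $k\le 2$ are immediate, since any digraph with $\chi\ge 2$ contains an arc, hence each of the oriented trees on at most $2$ vertices. For the inductive step, let $T$ be an oriented tree on $k$ vertices and pick a leaf $v$ with its unique neighbour $u$. Replacing $D$ with its converse if needed (which preserves $\chi$), I may assume the arc between them is $uv$. Set $T' = T - v$, an oriented tree on $k-1$ vertices. Given $D$ with $\chi(D)\ge 2k-2$, the induction hypothesis asserts that every digraph with chromatic number at least $2k-4$ contains $T'$. So the entire question is how to convert the surplus of $2$ in chromatic number into a free out-neighbour at the image of $u$.

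The core of the plan is therefore the following extension lemma: if $\chi(D)\ge 2k-2$, then there is an embedding $\phi\colon T'\hookrightarrow D$ such that $\phi(u)$ has an out-neighbour in $V(D)\setminus \phi(V(T'))$. Once it is in hand, the induction closes immediately by attaching $v$ to such a neighbour. The natural route to the lemma is a peeling argument: locate a set $S\subseteq V(D)$ of size bounded by a function of $k$ such that $\chi(D-S)\ge 2k-4$, apply the induction hypothesis inside $D-S$ to obtain a copy of $T'$, and use the interaction between this copy and $S$ to supply the missing out-neighbour. A promising intermediate target is a directed-BFS or DFS decomposition of $D$ producing two consecutive levels of chromatic number at least $k-1$ with dense bipartite-like cross-arcs, so that the recursion takes place on one side and the free out-neighbour is harvested from the other.

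The main obstacle, and the reason this conjecture has resisted attack since 1980, is that the budget of only $+2$ in chromatic number per peeled leaf is extraordinarily tight. A naive greedy extension along a long directed path produced by Gallai--Roy spends roughly $k-1$ units of chromatic number per leaf, which is exactly how Burr obtains his bound $(k-1)^2$; the more careful level-based analysis of Addario-Berry et al.\ pushes this down to $\tfrac12 k^2 - \tfrac12 k + 1$ but still loses a linear factor per leaf. Achieving the optimal $+2$ per peeling step seems to require a genuinely global structural theorem for digraphs of large chromatic number, symmetric between in-arcs and out-arcs, and substantially stronger than any currently available directed analogue of Gallai--Roy. I expect the real technical content of any successful plan to lie precisely in proving such a two-sided layered-decomposition theorem, from which the inductive peeling described above would then follow in a relatively clean manner.
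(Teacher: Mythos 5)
The statement you were asked to prove is Conjecture~\ref{conj:Burr} in the paper. It is stated as a \emph{conjecture}, attributed to Burr, and the paper contains no proof of it: it remains open. So there is no ``paper's own proof'' to compare against, and any complete argument you supplied would itself be a publishable result.

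Your proposal is not a proof, and to your credit you say so explicitly. The inductive framework (peel a leaf $v$ with neighbour $u$, set $T' = T - v$, embed $T'$, attach $v$) is the standard one going back to Burr, and your observation that the whole difficulty is paying only $+2$ units of chromatic number per peeled leaf is an accurate diagnosis of why the conjecture is hard. But the entire burden is carried by the ``extension lemma'' you isolate --- that $\chi(D)\ge 2k-2$ forces an embedding $\phi$ of $T'$ in which $\phi(u)$ has an out-neighbour outside $\phi(V(T'))$ --- and you give no proof of it; you only sketch a direction (a BFS/DFS levelling with two consecutive high-chromatic levels and dense cross-arcs) and then concede that this would require ``a genuinely global structural theorem \dots\ substantially stronger than any currently available directed analogue of Gallai--Roy.'' That sentence is correct, and it is exactly where the argument stops being an argument. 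The peeling step as you describe it (``locate $S$ with $\chi(D-S)\ge 2k-4$'') is trivially satisfiable by any $2$-element set $S$, so it carries no information; the nontrivial content would be choosing $S$ and the embedding of $T'$ coherently so that a vertex of $S$ can serve as $\phi(v)$, and that is precisely what is not supplied.

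In short: the statement is an open conjecture, the paper does not prove it, and your proposal identifies the right bottleneck but does not cross it. The best currently known bounds along these lines are Burr's $(k-1)^2$ and the improvement to $k^2/2 - k/2 + 1$ by Addario-Berry et al., both of which you correctly cite as losing a linear (rather than constant) factor per leaf; closing that gap would require the missing structural lemma, not a refinement of the inductive bookkeeping.
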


An interesting  step towards Burr's conjecture is to prove the following consequence of it.

\begin{conjecture}
If $T$ is an oriented tree of order $k$, then $\mad_{\chi}(T)\leq 2k-2$.
\end{conjecture}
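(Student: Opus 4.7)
The plan is to proceed by induction on $k$. The base case $k=2$ is immediate: $\chi(D) \geq 2$ forces $D$ to contain an arc, and any arc is a subdivision of the unique $2$-vertex oriented tree. For the inductive step, assume the statement for oriented trees of order less than $k$, and let $D$ be a digraph with $\chi(D) \geq 2k-2$. I would first pass to a $(2k-2)$-chromatic critical subdigraph $H \subseteq D$, so that every vertex of $H$ has underlying degree at least $2k-3$; equivalently, $d^+_H(v)+d^-_H(v) \geq 2k-3$ for every $v \in V(H)$. Pick a leaf $\ell$ of $T$ with unique neighbor $u$ and set $T' = T - \ell$. Since $\chi(H) = 2k-2 \geq 2(k-1)-2$, the induction hypothesis yields a subdivision $S$ of $T'$ in $H$, and the problem reduces to extending $S$ by a single directed path that starts or ends at the image $\phi(u)$ (according to the orientation of the edge $u\ell$ in $T$) and avoids $V(S)\setminus\{\phi(u)\}$.

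The key step is to choose $S$ extremally: for instance, among all $T'$-subdivisions in $H$, take one minimizing $|V(S)|$ or, more targeted, minimizing the quantity $|N^+_H(\phi(u)) \cap V(S)|$ in the case where the tree edge is $u \to \ell$. If the extension fails, every out-neighbor of $\phi(u)$ in $H$ lies in $V(S)\setminus\{\phi(u)\}$, and one attempts to reroute some branch of $S$ through one of those out-neighbors so as to free a vertex that can serve as $\phi(\ell)$, thereby contradicting extremality. Each branch of $S$ is itself a subdivision of an oriented path, which by Subsection~\ref{subset:path} is $\delta^+$-maderian with the tight bound $\mad_{\delta^+}(P) = |V(P)|-1$; Theorem~\ref{inTrees} adds analogous flexibility when $u$ is the root of an in-arborescence branch. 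Combined with the pigeonhole $\max\{d^+_H(\phi(u)), d^-_H(\phi(u))\} \geq k-1$ coming from the degree bound in $H$, these tools should provide enough local room to execute the rerouting in many cases.

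The step I expect to be the main obstacle, and the reason the statement is listed as a conjecture, is the fundamental gap between the chromatic hypothesis and the directed information actually required to place a leaf. Chromatic number controls only the underlying degree, whereas embedding a prescribed leaf asks for a directed extension in a specific direction at a specific vertex, and a $\chi$-critical subdigraph need not be critical for $\delta^+$ or $\delta^-$. In particular, any attempt to pass to a subdigraph with uniform $\delta^+ \geq k-1$ quickly runs into the still-open Conjecture~\ref{conj:tree}. A complete proof will therefore likely need either a new reduction turning the bound $d^+ + d^- \geq 2k-3$ into useful uniform directed-degree information on a well-chosen subdigraph, or a strengthened inductive statement tracking, at each image vertex, the set of admissible extensions compatible with the current partial subdivision; identifying such a strengthening is where I expect the bulk of the technical effort to go.
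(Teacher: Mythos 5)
This statement is not a theorem in the paper but an open conjecture: the paper introduces it with ``An interesting step towards Burr's conjecture is to prove the following consequence of it'' and gives no proof. There is therefore no paper argument to compare your attempt against, and your proposal correctly does not claim to close the problem.

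Your diagnosis of the obstruction is accurate and matches the reason the paper leaves this open. A $(2k-2)$-$\chi$-critical subdigraph $H$ gives only the underlying-degree bound $d^+_H(v)+d^-_H(v)\geq 2k-3$, which by pigeonhole yields $\max\{d^+_H(v),d^-_H(v)\}\geq k-1$ at each vertex, but not a subdigraph with uniform $\delta^+$ or uniform $\delta^-$; and any attempt to manufacture such a subdigraph runs precisely into Conjecture~\ref{conj:tree} (oriented trees being $\delta^+$-maderian), which is itself open. Your rerouting idea with an extremal choice of the $T'$-subdivision $S$ is the natural thing to try, but as stated it does not produce a contradiction: if all $k-1$ out-neighbours of $\phi(u)$ lie on $S$, nothing forces any branch of $S$ to be shortenable or redirectable, since subdivision edges of $S$ are directed paths whose internal vertices carry no usable degree guarantee in $H$. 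A couple of smaller points worth noting. First, the inductive hypothesis at step $k$ only gives a $T'$-subdivision when $\chi\geq 2(k-1)-2=2k-4$, which is weaker than what you actually have ($2k-2$); that slack of $2$ is the only directed leverage the induction hands you, and your sketch does not yet show how to convert it into a guaranteed leaf extension. Second, it is worth recording that oriented trees \emph{are} known to be $\chi$-maderian with a quadratic bound: Burr~\cite{Bur80} and Addario-Berry et al.~\cite{AHL+13} give $\mad_{\chi}(T)\leq k^2/2-k/2+1$ via finding $T$ as a subdigraph, so the open content of the conjecture is solely the improvement from quadratic to the linear bound $2k-2$, and any proof strategy should be measured against why it would beat the quadratic barrier. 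Your final paragraph, proposing either a reduction extracting uniform directed degree from $d^++d^-\geq 2k-3$ on a well-chosen subdigraph, or a strengthened induction tracking admissible extension directions at each image vertex, is a reasonable articulation of what is missing; but as written the proposal stops at identifying the gap rather than bridging it, which is consistent with the paper's decision to present the statement as a conjecture.
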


In Section~\ref{sec:dic}, we prove that every digraph is $\dic$-maderian.
Again determining $\mad_{\dic}$ for every digraph is still open. 
Since every digraph $D$ of order $n$ is a subdigraph of $\vec{K}_n$, the complete digraph of order $n$, and so $\mad_{\dic}(D) \leq \mad_{\dic}(\vec{K}_n)$, it is natural to focus on $\vec{K}_n$.
\begin{problem}
What is $\mad_{\dic}(\vec{K}_n)$ ?
\end{problem}

In Subsection~\ref{better-dic}, we show $\mad_{\dic}(\vec{K}_n) \leq 4^{n^2-2n+1}(n-1)+1$ and more generally that if $F$ is a digraph with $n$ vertices, $m$ arcs and $c$ connected components, then $\mad_{\dic}(F) \leq 4^{m-n+c}(n-1)+1$ (Corollary~\ref{cor:dic-Kn}).
We also give better upper bounds on $\mad_{\dic}$ for some particular digraphs.

\bigskip

To prove Theorem~\ref{thm:mader-undirected}, Mader showed a stronger result about the average degree. Recall that the {\it average degree} of a graph $G$ is $\bar{d}(G) = 2|E(G)|/|V(G)|$.
He proved that there exists a function $g'(k)$ such that every graph $G$ with at least $\bar{d}(G)\geq g'(k)$
contains a subdivision of $K_k$.  
The {\it average out-degree} of a digraph $D$ is   $\bar{d}^+(D)=|A(D)|/|V(D)|$.  (Note that this is equal to the average in-degree and half the average degree.)
A digraph is \emph{$\overrightarrow P_3$-free} if it does not contain $\overrightarrow P_3$ as a subdigraph, where $\overrightarrow P_3$ is the dipath on three vertices. 
There are bipartite graphs with arbitrarily large degree and arbitrarily large girth (recall that the {\it girth} of a graph is the length of a smallest cycle or $+\infty$ if it is a forest). 
Orienting edges of such graphs from one part to the other  result in  $\overrightarrow P_3$-free digraphs with arbitrarily large average out-degree and arbitrarily large girth (the girth of a digraph is the girth of its underlying graph). 
Consequently, a digraph is $\bar{d}^+$-maderian only  if it is an {\it antidirected forest}, 
that is, an oriented forest containing no $\overrightarrow P_3$ as a subdigraph. 
This simple necessary condition is  also sufficient.
Burr~\cite{Bur82} showed that all antidirected forests are $\bar d^+$-maderian: for every antidirected forest $F$, $\mad_{\bar d^+}(F)\leq 4|V(F)|-4$.

\begin{theorem}
A digraph is  $\bar d^+$-maderian if and only if it is an antidirected forest.
\end{theorem}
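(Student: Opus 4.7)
The plan is to prove both directions of the equivalence; since the essential ingredients are already laid out in the paragraph preceding the statement, the argument is largely bookkeeping.

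For \emph{necessity}, suppose $D$ is not an antidirected forest. Then either $D$ contains $\overrightarrow P_3$ as a subdigraph or the underlying graph of $D$ contains a cycle of some length $\ell \le |V(D)|$. Both properties are preserved under subdivision: when an arc is replaced by an internally disjoint dipath in the same direction, a pair of consecutive arcs forming a $\overrightarrow P_3$ at some vertex $v$ still yields a $\overrightarrow P_3$ involving $v$ and the two internal neighbours of $v$ on the new dipaths, while an underlying cycle of length $\ell$ is replaced by an underlying cycle of length at least $\ell$. Consequently, any $\overrightarrow P_3$-free digraph of girth greater than $|V(D)|$ contains no subdivision of $D$.

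The construction sketched in the excerpt then supplies such digraphs of arbitrarily large average out-degree: take bipartite graphs $H$ of arbitrarily large average degree and arbitrarily large girth (which exist by classical probabilistic or explicit constructions, e.g.\ Erd\H{o}s' random argument or Ramanujan graphs), and orient every edge of $H$ from one side of the bipartition to the other. The resulting digraph is $\overrightarrow P_3$-free, since no vertex has both an in-arc and an out-arc; it has the same girth as $H$; and its average out-degree is $\bar d(H)/2$. Thus $D$ cannot be $\bar d^+$-maderian.

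For \emph{sufficiency}, this is precisely the theorem of Burr~\cite{Bur82} recalled just above: every antidirected forest $F$ satisfies $\mad_{\bar d^+}(F)\le 4|V(F)|-4$, which is exactly the assertion we need. There is essentially no obstacle in the argument: necessity reduces to orienting well-known extremal bipartite graphs of large girth and large average degree, and sufficiency is a direct citation.
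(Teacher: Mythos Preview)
Your approach mirrors the paper's exactly: the paper also derives necessity from the existence of $\overrightarrow P_3$-free digraphs of arbitrarily large average out-degree and girth (obtained by orienting bipartite graphs from one side to the other), and obtains sufficiency by citing Burr's bound $\mad_{\bar d^+}(F)\le 4|V(F)|-4$.

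There is, however, a small logical gap in your necessity argument. In the case where $D$ itself is $\overrightarrow P_3$-free but its underlying graph contains a cycle (for instance, $D$ an antidirected $4$-cycle), your observation that ``an underlying cycle of length $\ell$ is replaced by an underlying cycle of length at least $\ell$'' is correct but points the wrong way: a subdivision of $D$ can have underlying cycles far longer than $|V(D)|$, so the girth hypothesis alone does not exclude it from the host. What actually closes the case is that subdividing any single arc already creates a $\overrightarrow P_3$; hence the only subdivision of such a $D$ that can sit inside a $\overrightarrow P_3$-free host is $D$ itself, and \emph{then} its short underlying cycle of length at most $|V(D)|$ is ruled out by the girth condition. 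The paper's one-word ``consequently'' glosses over this step too, but for a self-contained proof you should make the interplay between the two constraints explicit.
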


Addario-Berry et al.~\cite{AHL+13} conjectured that the bound $4|V(F)|-4$ in Burr's result is not tight.
\begin{conjecture}[Addario-Berry et al.~\cite{AHL+13}]\label{analog}
Let $D$ be a digraph.  If $|A(D)| > (k-2) |V(D)|$, 
then $D$ contains every antidirected tree of order $k$ as a subdigraph.
\end{conjecture}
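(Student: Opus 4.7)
The plan is to prove Conjecture~\ref{analog} by induction on $|V(D)|$, considering a minimum counterexample $D$ for a fixed antidirected tree $F$ of order~$k$. A standard peeling reduction lets us assume every vertex of $D$ has total degree at least $k-1$: if some $v$ satisfies $d^+_D(v)+d^-_D(v)\le k-2$, then
$$|A(D-v)|\ \ge\ |A(D)|-(k-2)\ >\ (k-2)|V(D)|-(k-2)\ =\ (k-2)|V(D-v)|,$$
so by minimality $D-v$ already contains $F$, a contradiction. Hence one may freely assume $D$ has minimum total degree at least $k-1$.

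The next step is to embed $F$ leaf by leaf. Write $F=(S\cup T,A(F))$ with sources $S$ and sinks $T$; every arc of $F$ goes from $S$ to $T$. Picking a leaf $v$ of $F$ adjacent to $u$ and setting $F'=F-v$, the aim is to embed $F'$ by induction (on $|A(D)|$, or in a suitable subdigraph) and then extend by choosing an out-neighbour of $\phi(u)$, if $u\in S$, or an in-neighbour of $\phi(u)$, if $u\in T$, outside the image. The difficulty is that the total-degree bound does not translate directly into a one-sided degree bound at $\phi(u)$, which is what the extension requires. A plausible refinement is to split $V(D)$ into $V^+=\{x:d^+_D(x)\ge d^-_D(x)\}$ and $V^-=V(D)\setminus V^+$, to steer the sources of $F$ into $V^+$ and the sinks into $V^-$, and to process $V(F)$ in an order (say, rooted at a centroid, treating vertices of large degree in $F$ first) that lets the one-sided degree budget be passed from the already-embedded part of $F$ to the new vertex. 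An auxiliary lemma extracting, from $\bar d^+(D)>k-2$, a subdigraph with a controlled quantity of ``large one-sided degree'' vertices is the natural analogue of the classical subgraph-of-large-minimum-degree extraction used to attack Erd\H{o}s-S\'os.

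The main obstacle is precisely that Conjecture~\ref{analog} is the natural digraph analogue of the Erd\H{o}s-S\'os conjecture on the containment of trees in graphs of large average degree, which has resisted full resolution for fifty years. Any complete proof should therefore either mirror the partial Erd\H{o}s-S\'os successes or exploit some feature specific to the antidirected setting; the bipartite structure $S\to T$ of $F$ is the obvious such feature and seems indispensable. A reasonable intermediate target is to verify the bound for structured subfamilies---antidirected paths, stars, spiders, and caterpillars---where explicit embeddings are tractable, and then to attempt a stability analysis: show that either $D$ contains many vertices of high one-sided degree so that the greedy embedding succeeds, or $D$ is close to the extremal configuration of disjoint copies of $\vec K_{k-1}$, which is ruled out by the strict inequality $|A(D)|>(k-2)|V(D)|$ since $\vec K_{k-1}$ has density exactly $k-2$.
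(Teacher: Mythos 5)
This statement is an open \emph{conjecture} in the paper (attributed to Addario-Berry et al.), not a theorem, so there is no proof in the paper for your attempt to be compared against. Indeed, as the paper itself observes, Conjecture~\ref{analog} restricted to symmetric digraphs is equivalent to the Erd\H{o}s--S\'os conjecture, which has resisted a full published proof for decades; you should not expect to prove it here, and to your credit your write-up is honest that it is only a plan.

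As a plan, it has one sound step and one genuine gap. The peeling step is correct: if some vertex $v$ has total degree $d^+(v)+d^-(v)\le k-2$, then deleting $v$ removes one vertex and at most $k-2$ arcs, so $|A(D-v)|>(k-2)|V(D)|-(k-2)=(k-2)|V(D-v)|$, and a minimal counterexample must have every total degree at least $k-1$. That is the standard analogue of the minimum-degree extraction in Erd\H{o}s--S\'os. The gap is the one you yourself name: the leaf-by-leaf extension needs a lower bound on the \emph{one-sided} degree of the already-embedded neighbour (out-degree if the new leaf is a sink, in-degree if it is a source), and total degree $k-1$ gives no such bound. The proposed repair---split $V(D)$ into $V^+$ and $V^-$ by comparing $d^+$ to $d^-$, steer sources into $V^+$ and sinks into $V^-$---is not an argument: you give no mechanism forcing the image of $S$ to land in $V^+$, no lemma saying $V^+$ or $V^-$ carries enough arcs, and no control on how the degree budget is ``passed'' along the tree. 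In particular, a digraph can satisfy $\bar d^+(D)>k-2$ while almost all vertices have tiny out-degree and a few have huge out-degree (or vice versa), in which case a greedy embedding respecting the $S\to T$ bipartition fails at the first step. You would need an actual analogue of the structural lemmas used in the partial Erd\H{o}s--S\'os results (e.g.\ extracting a subdigraph where one-sided degrees are simultaneously controlled), and none is supplied. The closing ``stability'' paragraph is also only a description of what a proof might look like. So: the reduction to total degree $\ge k-1$ is correct, but everything after that is a wish list rather than a proof, and since the target is a known hard open conjecture, that is to be expected.
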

The value $k-2$ in this conjecture would be best possible, since the oriented star $S^+_k$, consisting of a vertex dominating $k-1$ others, is not contained in any
digraph in which every vertex has out-degree $k-2$. It is also  
tight because the complete digraph $\vec{K}_{k-1}$ 
has $(k-2)(k-1)$ arcs but  trivially does not contain any oriented tree of order $k$.

As observed in~\cite{AHL+13}, Conjecture~\ref{analog} for oriented graphs implies Burr's conjecture (Conjecture~\ref{conj:Burr}) for antidirected trees and
Conjecture~\ref{analog} for symmetric digraphs is equivalent to the well-known Erd\H{o}s-S\'os
conjecture. 
\begin{conjecture}[Erd\H{o}s and S\'os~\cite{Erd65}]\label{erdossos}\rm 
Let $G$ be a graph.  If $|E(G)| > \frac{1}{2} (k-2) |V(G)|$, 
then $G$ contains every tree of order $k$ .
\end{conjecture}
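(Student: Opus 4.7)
The standard first reduction is to extract a subgraph $H \subseteq G$ with $\delta(H) \geq \lceil (k-1)/2 \rceil$, which exists because $|E(G)| > \frac{1}{2}(k-2)|V(G)|$ guarantees that iteratively deleting vertices of degree at most $(k-2)/2$ never empties the graph. It then suffices to find the desired tree $T$ inside $H$. A naive greedy embedding of $T$ rooted at any vertex, and processed in BFS order, would require $\delta(H) \geq k-1$, so we have only roughly half the degree we would need to simply plow through.

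The plan is therefore to root $T$ carefully and to balance the embedding against the structure of $T$: at high-degree vertices of $T$ we must use host vertices with many still-available neighbors, while at low-degree vertices of $T$ (internal vertices of long paths, and leaves) a modest degree suffices. Concretely, I would order the vertices of $T$ so that heavy branches are embedded first, into a ``core'' of $H$ where we can find vertices of degree noticeably larger than $(k-1)/2$; the remaining light branches would then be threaded into the rest of $H$ using only the $(k-1)/2$ budget. To create such a core, the key structural input is a dichotomy: either $H$ contains a subgraph $H'$ of noticeably larger minimum degree on a not-too-large vertex set (in which case we embed $T$ directly in $H'$ by the naive greedy), or $H$ is ``close'' to a disjoint union of cliques on roughly $k-1$ vertices, in which case one of those cliques is already large enough to contain $T$.

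The hard part is precisely this dichotomy. The extremal graphs, disjoint unions of $K_{k-1}$'s, attain the edge bound with equality and have minimum degree exactly $k-2$, so the conjecture is tight and the argument can afford no slack. The stability statement one would want, that ``subextremal'' graphs all contain a higher-degree subregion, is genuinely delicate and has only been made quantitative in a few special cases (paths, via Erd\H os-Gallai; spiders; caterpillars; trees of bounded diameter). For $k$ sufficiently large, Ajtai, Koml\'os, Simonovits, and Szemer\'edi carried out such a dichotomy using the Szemer\'edi regularity lemma and an absorbing argument; but the general case remains open, and any approach along the lines sketched above will have to confront the same extremal obstruction.
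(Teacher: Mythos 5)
This statement is labeled as a conjecture in the paper, not a theorem: the paper offers no proof of the Erd\H{o}s--S\'os conjecture and in fact explicitly records that only partial results are known, citing the Ajtai--Koml\'os--Simonovits--Szemer\'edi announcement of a proof for sufficiently large order. You correctly recognize this, and your proposal is not a proof but an honest sketch of the obstruction, which is the appropriate response here. Your technical remarks are accurate: the edge hypothesis $|E(G)| > \tfrac{1}{2}(k-2)|V(G)|$ does yield, by iterated deletion of low-degree vertices, a subgraph with minimum degree at least $\lceil (k-1)/2 \rceil$; a naive greedy embedding of the tree would need degree $k-1$, which is roughly twice that; the disjoint unions of $K_{k-1}$ are precisely the tight examples, having $\tfrac{1}{2}(k-2)$ edges per vertex and no room for a $k$-vertex tree, so there is genuinely no slack; and AKSS's regularity/absorption approach handles the case of large $k$. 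Since the paper proves nothing here, there is no authorial proof to compare against, and your assessment that the general case remains open is exactly what the paper itself says.
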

Their conjecture  has attracted a fair amount of attention over  the last decades. 
Partial solutions are given in~\cite{BrDo96, Haxell2001, SaWo97}. 
In the early 1990's, Ajtai, Koml\'os, Simonovits and Szemer\' edi  announced a proof of this result for sufficiently large~$m$.

\bigskip

Since $k$-connected and $k$-edge-connected graphs have  minimum degree at least  $k$,  Theorem~\ref{thm:mader-undirected}  implies that every graph $G$ with connectivity (resp. edge-connectivity) at least $g(k)$ contains a subdivision of $K_k$.
Let $\kappa(D)$, $\kappa'(D)$, be respectively the strong connectivity and the strong arc-connectivity of $D$. 

\begin{problem}
Are all digraphs $\kappa$-maderian?  $\kappa'$-maderian ?
\end{problem}

The following conjecture due to Thomassen~\cite{Thomassen89} implies that all digraphs are $\kappa$-maderian.

\begin{conjecture}[Thomassen~\cite{Thomassen89}]
There exists $f_1(k)$ such that if $\kappa(D)\geq f_1(k)$,
and $x_1, \dots, x_k$ and $y_1, \dots , y_k$ are distinct vertices of $D$, then $D$ contains $k$ disjoint dipaths
$P_1, \dots , P_k$ such that $P_i$ goes from $x_i$ to $y_i$ for all $i \in [k]$. 
\end{conjecture}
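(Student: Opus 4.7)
The plan is to proceed by induction on $k$, with the base case $k=1$ immediate from Menger's theorem (since $\kappa(D)\geq 1$ already guarantees a dipath from $x_1$ to $y_1$). For the inductive step, the naive idea is to find a dipath $P_1$ from $x_1$ to $y_1$ that avoids the other terminals, delete its internal vertices, and apply induction to the remaining $k-1$ pairs in $D - (V(P_1)\setminus\{x_2,\ldots,y_k\})$. This direct approach fails: removing even a single internal vertex of $P_1$ can collapse $\kappa$ by much more than one, and unlike in the undirected setting there is no local BFS/ball argument controlling the damage done to strong connectivity when a few vertices are removed.

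A more structural strategy is to use the large connectivity assumption to extract a ``universal router'' inside $D$. Concretely, I would try to show that $\kappa(D)\geq f_1(k)$ forces the existence of a large set $W$ of vertices that is \emph{$k$-well-linked}, in the sense that any two disjoint $k$-subsets $A,B\subseteq W$ are joined by $k$ internally disjoint dipaths from $A$ to $B$. Such a $W$ would act as a switching board: one links each $x_i$ to some $a_i\in W$ and each $b_i\in W$ to $y_i$ via disjoint dipaths using iterated applications of Menger's theorem on carefully chosen cuts, and then completes the linkage internally via the well-linkedness of $W$. A second layer of structure could be imported from the Directed Grid Theorem of Kawarabayashi and Kreutzer~\cite{KaKr15}: inside a sufficiently ``rich'' subdigraph, locate a cylindrical grid subdivision along which any prescribed $k$-linkage is trivially routable. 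The inductive parameters would then have to be chosen so that pulling $x_i$ and $y_i$ into the router consumes only a controlled amount of the connectivity budget.

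The principal obstacle is the extraction step. In undirected graphs, high connectivity forces high tree-width, which via the Grid Minor Theorem yields a large grid subdivision and hence linkage — this is, in essence, the heart of the Robertson--Seymour linkage theorem. In digraphs, high $\kappa$ does \emph{not} obviously force high directed tree-width, and the correspondence between directed connectivity and any notion of directed width is subtle; worse, directed $k$-linkage is NP-hard already for $k=2$, so the argument must use the connectivity hypothesis in an essentially global, non-algorithmic way. Thomassen's own resolution of the $k=2$ case proceeds by a delicate analysis of a minimum counterexample, and it is precisely the difficulty of iterating or generalising this argument — or, alternatively, of proving a structural dichotomy of the form ``either $D$ has small directed tree-width or it contains a useful routing gadget'' — that keeps the full conjecture open.
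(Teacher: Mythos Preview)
The statement you were asked to prove is a \emph{conjecture}, not a theorem: the paper states it as an open problem of Thomassen and provides no proof whatsoever. There is therefore nothing in the paper to compare your proposal against.

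Your write-up is not a proof either, and to your credit you say so yourself in the final paragraph. What you have produced is a survey of plausible attack lines --- induction on $k$, well-linked sets, routing through a directed grid via the Kawarabayashi--Kreutzer theorem --- together with an honest account of why each one stalls. That is a reasonable piece of expository writing, but it does not establish the conjecture, and you should not present it as a proof proposal. In particular, the central gap you identify is real and unresolved: high strong connectivity $\kappa(D)$ is not known to force any structure (large directed tree-width, a well-linked set of the required strength, or a routing gadget) that would let you complete an arbitrary $k$-linkage, and the NP-hardness of directed $2$-linkage already rules out any purely local or algorithmic argument of the Robertson--Seymour type.
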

This conjecture would also imply the following one due to Lov\'asz~\cite{Lovasz75}.
\begin{conjecture}
There exists a integer $p$ such that every $p$-strongly connected digraph has an even directed cycle.
\end{conjecture}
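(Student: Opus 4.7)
The plan is to assume Thomassen's linkage conjecture (stated just above) and use it to produce, around three vertices, five directed cycles whose lengths satisfy a linear identity forcing at least one of them to be even.

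Let $p$ be chosen large enough so that $p-3 \geq f_1(6)$, where $f_1$ is the function of Thomassen's conjecture, and also $p \geq 15$; let $D$ be a $p$-strongly connected digraph and pick three distinct vertices $u,v,w \in V(D)$. Since $D$ has minimum in- and out-degree at least $p$, a straightforward greedy argument yields twelve distinct auxiliary vertices in $V(D)\setminus\{u,v,w\}$: for every ordered pair $(x,y)$ of distinct vertices in $\{u,v,w\}$, an out-neighbour $a_{xy}$ of $x$ and an in-neighbour $b_{xy}$ of $y$, all twelve chosen vertices being pairwise distinct.

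Apply Thomassen's linkage conjecture inside $D - \{u,v,w\}$, which remains $(p-3)$-strongly connected, to the six pairs $(a_{xy},b_{xy})$. This yields six pairwise vertex-disjoint dipaths $Q_{xy}$. Prepending the arc $xa_{xy}$ and appending $b_{xy}y$ to each $Q_{xy}$ gives six dipaths $P_{xy}$ from $x$ to $y$ in $D$, pairwise internally disjoint and sharing only vertices in $\{u,v,w\}$.

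These six dipaths form five directed cycles in $D$: the two ``triangular'' cycles $C_1 = P_{uv}\cup P_{vw}\cup P_{wu}$ and $C_2 = P_{uw}\cup P_{wv}\cup P_{vu}$, and the three ``digons'' $B_{uv} = P_{uv}\cup P_{vu}$, $B_{vw} = P_{vw}\cup P_{wv}$, $B_{uw} = P_{uw}\cup P_{wu}$. Both $|C_1|+|C_2|$ and $|B_{uv}|+|B_{vw}|+|B_{uw}|$ equal the total length $\sum_{x\neq y}|P_{xy}|$, so they are equal. If all five of these dicycles were odd, the first sum would be even (sum of two odd numbers) while the second would be odd (sum of three odd numbers), a contradiction. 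Hence at least one of $C_1,C_2,B_{uv},B_{vw},B_{uw}$ is even, giving the desired even directed cycle.

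The only real technical obstacle in this reduction is the bookkeeping needed to choose the twelve auxiliary vertices distinctly and to verify that the six $P_{xy}$ are indeed pairwise internally disjoint; both are routine once $p$ is sufficiently large. The genuine difficulty lies one level below: the argument rests entirely on Thomassen's linkage conjecture, which is itself open. Unconditionally, the statement is only known through the deep structural characterisation of digraphs without even dicycles, proved by Robertson, Seymour and Thomas, and independently by McCuaig, by methods quite different from the linkage approach sketched here.
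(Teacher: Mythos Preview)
The statement you were asked to address is recorded in the paper as a \emph{conjecture} (attributed to Lov\'asz), and the paper offers no proof of it. What the paper does assert, in the sentence just before, is that Thomassen's linkage conjecture \emph{implies} this one---but that implication is stated without any argument. So there is no ``paper's proof'' to compare against; what you have supplied is a correct and clean justification of exactly the implication the paper leaves unproved.

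Your reduction is sound. The greedy selection of twelve distinct auxiliary vertices is routine once $p\geq 15$; removing $\{u,v,w\}$ drops the strong connectivity by at most~$3$, so Thomassen's conjecture with $k=6$ applies in $D-\{u,v,w\}$ and yields the six pairwise vertex-disjoint $Q_{xy}$; and since each $Q_{xy}$ already contains $a_{xy}$ and $b_{xy}$, the internal vertex sets of the $P_{xy}$ are precisely the $V(Q_{xy})$, hence disjoint and avoiding $\{u,v,w\}$. The parity identity
\[
|C_1|+|C_2| \;=\; \sum_{x\neq y}|P_{xy}| \;=\; |B_{uv}|+|B_{vw}|+|B_{uw}|
\]
then forces at least one of the five dicycles to be even, exactly as you say. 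This is essentially the observation that every subdivision of $\vec K_3$ contains an even directed cycle, combined with the fact (which the paper also notes) that Thomassen's conjecture would make every digraph $\kappa$-maderian.

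One caution on your final paragraph: the Robertson--Seymour--Thomas and McCuaig structure theorems resolve the \emph{algorithmic} even-dicycle problem (equivalently, the Pfaffian-orientation problem), but it is not evident that they directly yield the connectivity statement here. The paper---written well after those results appeared---still lists Lov\'asz's question as an open conjecture, so you should not present it as unconditionally settled without a precise reference that actually derives the connectivity bound.
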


\section{Subdivision in digraphs with large minimum out-degree}\label{sec:tree}

\subsection{Subdivisions of oriented paths}\label{subset:path}

Let $P=(x_1,x_2,\cdots, x_n)$ be an oriented path.
We say that $P$ is an {\it $(x_1,x_n)$-path}.
The vertex $x_1$ is the {\it initial vertex} of $P$ and $x_n$ its {\it terminal vertex}.
$P$ is a {\it directed path} or simply a {\it dipath}, if $x_i\ra x_{i+1}$ for all~$i \in [n-1]$. 

Let~$k_1$ be nonnegative integer and $k_2, \ldots, k_\ell$ be positive integers.
  We denote by~$P(k_1, k_2, \ldots, k_{\ell})$ the path obtained from an undirected path
  $(v_1 v_2 \ldots v_{\ell+1})$ by replacing, for every~$i \in [\ell]$, the edge~$(v_i, v_{i+1})$ by a directed path
  of length~$k_i$ from~$v_i$ to $v_{i+1}$ if~$i$ is odd, and from $v_{i+1}$ to $v_{i}$ if~$i$ is even.
(If $k_1=0$, then $v_1=v_2$.)

\begin{theorem}\label{paths}
 Let~$P(k_1, k_2, \ldots, k_\ell)$ be a path, and let~$D$ be a digraph with~$\delta^+(D)\geq \sum_{i=1}^\ell k_i$.
 For every~$v \in V(D)$, $D$ contains a path $P(k^\prime_1, k^\prime_2, \ldots, k^\prime_\ell)$ with initial vertex~$v$ such that
 $k^\prime_i \geq k_i$ if~$i$ is odd, and $k^\prime_i = k_i$ otherwise.
\end{theorem}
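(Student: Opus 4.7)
The plan is to establish Theorem~\ref{paths} by induction on $K := \sum_{i=1}^{\ell} k_i$, the total length of the target oriented path. The base case $K = 0$ is trivial: the path degenerates to a single vertex and $v$ itself serves as the required embedding.

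For the inductive step, the main case is $k_1 \geq 1$. Since $\delta^+(D) \geq K \geq 1$, the vertex $v$ has an out-neighbour $u$, and the subdigraph $D' := D - v$ satisfies $\delta^+(D') \geq K - 1$, because removing a single vertex decreases each remaining out-degree by at most one. Applying the induction hypothesis to the shorter path $P(k_1 - 1, k_2, \ldots, k_\ell)$, of total length $K - 1$, in $D'$ with initial vertex $u$ produces a path $P(k''_1, \ldots, k''_\ell)$ satisfying $k''_1 \geq k_1 - 1$, $k''_i \geq k_i$ for odd $i \geq 3$, and $k''_i = k_i$ for even $i$. Prepending the arc $v \to u$ to the initial forward segment extends it by one, and the resulting path $P(k''_1 + 1, k''_2, \ldots, k''_\ell)$ in $D$ begins at $v$ and satisfies the required bounds, since $k''_1 + 1 \geq k_1$.

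The remaining case $k_1 = 0$ is the main obstacle: segment~$1$ is empty, so $v = v_1 = v_2$ must serve as the endpoint of the backward segment~$2$ of length $k_2$, yet the out-degree hypothesis provides no control over the in-degree of $v$. I would exploit the theorem's flexibility $k'_1 \geq 0$ to extend segment~$1$ forward rather than leave it empty. Specifically, take a longest directed path $v = x_0, x_1, \ldots, x_p$ starting at $v$ in $D$; by maximality $N^+(x_p) \subseteq \{x_0, \ldots, x_{p-1}\}$, so $p \geq \delta^+(D) \geq K$ and $x_p$ has at least $K$ back-arcs into $\{x_0, \ldots, x_{p-1}\}$. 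A pigeonhole count shows that one such back-arc $x_p \to x_{j^\star}$ satisfies $j^\star \leq p - k_2$, and from the resulting directed cycle one reads off segment~$1$ as the prefix $x_0, \ldots, x_{j^\star}$ (yielding $k'_1 = j^\star \geq 0 = k_1$) together with segment~$2$ as the dipath $x_{p - k_2 + 1}, \ldots, x_p, x_{j^\star}$ of exact length $k_2$; internal disjointness follows from the bound on $j^\star$. The remaining segments $3, \ldots, \ell$ are then attached by a recursive application of the statement in the subdigraph obtained by deleting the internal vertices used so far. The principal technical difficulty is to guarantee that this residual subdigraph retains a large enough minimum out-degree to sustain the recursion; I expect this to force a careful choice of $j^\star$ (or a lexicographic induction on $(\ell, K)$) so that the out-degree loss absorbed by segments~$1$ and~$2$ is exactly compensated by the shrinking of the remaining target path.
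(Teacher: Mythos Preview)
Your reduction for $k_1 \geq 1$ is clean and correct: peel off one forward arc $v\to u$, delete $v$, and recurse on $P(k_1-1,k_2,\ldots,k_\ell)$ in $D-v$. The paper does the same thing, just in one shot (it walks forward $k_1$ steps at once rather than one at a time), so up to this point the two arguments agree.

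The real content is the case $k_1 = 0$, and there your plan has a genuine gap. The longest-path argument does produce a vertex $x_{j^\star}$ and a backward block of length exactly $k_2$ ending there, but the forward prefix $x_0,\ldots,x_{j^\star}$ that you take as segment~1 has length $j^\star$, and nothing bounds $j^\star$ in terms of the $k_i$: even the minimal back-arc target only satisfies $j^\star \le p - K$, which can be arbitrarily large. If you delete segment~1 before recursing on segments $3,\ldots,\ell$, you remove $j^\star + 1$ vertices and may drop $\delta^+$ far below $\sum_{i\ge 3} k_i$, so the induction hypothesis is lost. If instead you leave segment~1 in place and delete only the $k_2$ vertices of segment~2, the recursive path for segments $3,\ldots,\ell$ may re-enter segment~1, and then the concatenation is no longer a simple path. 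No ``careful choice of $j^\star$'', and no lexicographic reshuffling of the induction, repairs both problems simultaneously: you need to control where the later blocks can go, not just how many vertices you delete.

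The paper's fix is a different idea. After the forward walk of length $k_1$ to a vertex $u$ (possibly $u=v$ when $k_1=0$), it does \emph{not} take a longest path; it passes to a \emph{sink strong component} $H$ reachable from $u$ in $D$ minus the forward walk. Because $H$ is terminal, every vertex of $H$ keeps all of its out-neighbours inside $H$, so $\delta^+(H) \ge \delta^+(D) - k_1$. Inside the strong digraph $H$ one then finds a dipath of length exactly $k_2$ ending at the entry vertex (via a long directed cycle and strong connectivity), deletes its $k_2$ vertices, and recurses \emph{entirely within $H$} for the remaining $\ell - 2$ blocks. Confinement to a sink component is precisely what makes the out-degree loss match the number of deleted vertices and prevents the rest of the path from escaping back into segment~1. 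That confinement step is the ingredient missing from your proposal.
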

\begin{proof}
 By induction on~$\ell$.
 If~$\ell = 1$, then the result holds trivially.
 Assume now~$\ell \geq 2$, and suppose that, for every path $P(x_1, x_2, \ldots, x_{t})$ with~$t < \ell$ and every
 digraph~$G$ with~$\delta^+ (G) \geq \sum_{i=1}^t x_i$,
 ~$G$ contains a path $P(x^\prime_1, x^\prime_2, \ldots, x^\prime_{t})$ starting at any vertex of~$G$ such that
 $x^\prime_i \geq x_i$ if~$i$ is odd, and $x^\prime_i = x_i$ otherwise.

 Let~$v$ be a vertex of~$D$.
 Since~$\delta^+(D)\geq \sum_{i=1}^\ell k_i$, there exists a $(v,u)$-dipath~$P_{v, u}$ in~$D$  of length exactly~$k_1$, for some vertex~$u \in V(D)$.
 Let $D' =   D - (P_{v, u} - u)$, let $C$ be the connected component of~$D'$ containing~$u$,
and let~$H$ be a \emph{sink} strong component of~$C$ (i.e. a strong component without arcs leaving it) that is reachable by a
 directed path in~$C$ starting at~$u$.
 We denote by~$P_{u,x}$ a $(u,x)$-dipath in~$C$ such that $V(P_{u,x}) \cap V(  H) = \{x\}$.

 Note that no vertex of~$H$ dominates a vertex in~$V(P_{u,x})\setminus\{x\}$
 since~$H$ is a sink strong component.
 Thus, $\delta^+(H) \geq \delta^+(D) - k_1 \geq k_2$.
 As a consequence, ~$H$ contains a directed cycle of length
 at least~$k_2$.
 Using this and the fact that~$H$ is strongly connected,
 we conclude that there exists a dipath~$P_{y,x}$ in~$H$
 from a vertex~$y \in V(H)\setminus\{x\}$ to~$x$ of length exactly~$k_2$.
 Let $G=H - (P_{y,x} - y)$.
 One may easily verify that $\delta^+(G)$ is at least~$\delta^+(D) - k_1 - k_2 \geq \sum_{i=3}^\ell k_i$.

 Let~$  Q_{v,y} =   P_{v, u}   P_{u,x}   P_{y,x}$.
 Note that~$  Q_{v,y}$ is a path~$P(k^\prime_1, k_2)$ starting at~$v$ with~$k^\prime_1 \geq k_1$.
 Therefore, the result follows immediately if~$\ell = 2$.
 Suppose now that~$\ell \geq 3$.
 By the induction hypothesis, $G$ contains a path~$W_y := P(k^\prime_3, \ldots, k^\prime_\ell)$ with initial vertex~$y$
 such that $k^\prime_i \geq k_i$ if~$i$ is odd, and $k^\prime_i = k_i$ otherwise.
 Therefore, $ Q_{v,y} W_y$ is the desired path~$P(k^\prime_1, k^\prime_2, \ldots, k^\prime_\ell)$
 with initial vertex~$v$.
\end{proof}

Since $\sum_{i=1}^\ell k_i = |V(P(k_1, k_2, \ldots, k_\ell))|-1$, and that the complete digraph $\vec{K}_k$ on $k$ vertices has minimum out-degree $k-1$ and contains no path on more than $k$ vertices, we obtain the following corollary.
\begin{corollary}
$\mad_{\delta^+}(P) = |V(P)| -1$ for every oriented path $P$.
\end{corollary}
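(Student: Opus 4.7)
The plan is to prove the corollary as a direct consequence of Theorem~\ref{paths} together with a simple extremal example. Any oriented path $P$ can be encoded as $P(k_1, k_2, \ldots, k_\ell)$ for a unique decomposition into maximal directed sub-paths (its \emph{blocks}), and by construction $\sum_{i=1}^\ell k_i = |V(P)| - 1$. The first step is to observe that this encoding matches what Theorem~\ref{paths} delivers: a path of the form $P(k'_1, \ldots, k'_\ell)$ with $k'_i \geq k_i$ for odd $i$ and $k'_i = k_i$ for even $i$ is, in particular, a subdivision of $P$, since a subdivision of an oriented path is obtained precisely by extending some (possibly all) of its blocks while preserving their directions.

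For the upper bound $\mad_{\delta^+}(P) \leq |V(P)| - 1$, I would simply feed $P(k_1, \ldots, k_\ell)$ into Theorem~\ref{paths}: any digraph $D$ with $\delta^+(D) \geq \sum_{i=1}^\ell k_i = |V(P)|-1$ contains (even starting at any prescribed vertex) a path of the form $P(k'_1, \ldots, k'_\ell)$ meeting the $k'_i \geq k_i$ condition, hence a subdivision of $P$.

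For the matching lower bound, I would exhibit the standard obstruction. The complete digraph $\vec{K}_{|V(P)|-1}$ satisfies $\delta^+(\vec{K}_{|V(P)|-1}) = |V(P)| - 2$, yet every subdivision of $P$ has at least $|V(P)|$ vertices, so $\vec{K}_{|V(P)|-1}$ contains no such subdivision. This certifies $\mad_{\delta^+}(P) \geq |V(P)| - 1$, and combining the two inequalities yields the equality.

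There is no genuine obstacle at this stage: all the combinatorial content has been absorbed into Theorem~\ref{paths}. The only thing worth being careful about is the (mild) bookkeeping: checking that the relaxation $k'_i \geq k_i$ for odd $i$ and $k'_i = k_i$ for even $i$ really does yield a subdivision of $P(k_1, \ldots, k_\ell)$ rather than merely a homeomorphic image, and noting that the encoding $P(k_1, \ldots, k_\ell)$ captures an arbitrary oriented path up to choice of the initial endpoint (which is immaterial since Theorem~\ref{paths} holds for every starting vertex $v$).
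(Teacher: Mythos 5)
Your proof is correct and takes essentially the same route as the paper: the upper bound $\mad_{\delta^+}(P)\leq |V(P)|-1$ follows from Theorem~\ref{paths} after encoding $P$ as $P(k_1,\ldots,k_\ell)$ with $\sum_i k_i = |V(P)|-1$, and the lower bound is witnessed by $\vec K_{|V(P)|-1}$, which has minimum out-degree $|V(P)|-2$ but too few vertices to host any subdivision of $P$. The only (minor) difference is phrasing: the paper says $\vec K_k$ contains no path on more than $k$ vertices, while you count vertices directly; these are the same observation.
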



\subsection{Subdivision of in-arborescences}\label{subsec:inarb}

The aim of this subsection is to prove that the in-arborescences are $\delta^+$-maderian.
We need some preliminary results. The first one is the vertex and directed version of the celebrated Menger's theorem \cite{Menger27}. (See also \cite{Goring00} for a short proof.)

\begin{theorem}[Menger's theorem]\label{theorem:menger}
 Let $D$ be a digraph, and let $S, T  \subseteq V(D)$.
 The maximum number of vertex-disjoint $(S,T)$-dipaths is equal to the minimum size of an $(S,T)$-vertex-cut.
\end{theorem}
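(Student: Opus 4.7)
}

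The easy inequality, that the maximum number of vertex-disjoint $(S,T)$-dipaths is at most the minimum size of an $(S,T)$-vertex-cut, is immediate: for any collection $\mathcal{P}$ of vertex-disjoint $(S,T)$-dipaths and any $(S,T)$-vertex-cut $C$, every path in $\mathcal{P}$ must meet $C$, and distinct paths meet $C$ in distinct vertices, giving $|\mathcal{P}| \leq |C|$.

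For the reverse inequality I would proceed by induction on $|A(D)|$. Let $k$ be the minimum size of an $(S,T)$-vertex-cut; I want to exhibit $k$ vertex-disjoint $(S,T)$-dipaths. If $A(D) = \emptyset$, every $(S,T)$-dipath is a single vertex of $S \cap T$, and both sides equal $|S\cap T|$. Otherwise, pick an arc $e = uv$ and examine $D - e$ with its minimum cut $k'$. Deleting one arc lowers $k$ by at most one, so $k' \in \{k-1, k\}$. If $k' = k$, the induction hypothesis yields $k$ vertex-disjoint paths in $D-e$, which are also paths of $D$, and we are done.

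Assume $k' = k-1$ and fix a minimum $(S,T)$-vertex-cut $C'$ of $D-e$ of size $k-1$. A short check shows that $C_u := C' \cup \{u\}$ and $C_v := C' \cup \{v\}$ are both $(S,T)$-vertex-cuts of $D$ of size exactly $k$. Now split $D$: let $D_1$ be the subdigraph on vertices reachable from $S$ without crossing $C_u$ (with $C_u$ itself included as the new target set), and let $D_2$ be the symmetric downstream subdigraph of $C_v$ (with $C_v$ as the new source set). Both have strictly fewer arcs than $D$ because the arc $e$ lies on only one side of each cut and the other side is non-trivial. Minimality of $k$ in $D$ forces the minimum $(S, C_u)$-cut in $D_1$ to be exactly $|C_u| = k$, and likewise for $D_2$; applying induction gives $k$ vertex-disjoint $(S, C_u)$-dipaths in $D_1$ and $k$ vertex-disjoint $(C_v, T)$-dipaths in $D_2$. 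Matching them via the common vertices of $C'$, together with the arc $e$ to join the path ending at $u$ with the one starting at $v$, produces $k$ vertex-disjoint $(S,T)$-dipaths in $D$.

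The main obstacle is the bookkeeping for this last case: verifying that $D_1$ and $D_2$ really have fewer arcs than $D$ (so that induction applies), arguing that the minimum $(S, C_u)$-cut in $D_1$ is exactly $k$ rather than smaller (which would contradict minimality of cuts in $D$) or larger (impossible since $C_u$ itself has size $k$), and checking that the splicing through $C'$ and $e$ produces internally vertex-disjoint paths. Degenerate configurations such as $S \cap T \neq \emptyset$, or cuts coinciding with $S$ or $T$, deserve a separate brief treatment at the start of the inductive step.
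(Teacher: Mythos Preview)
The paper does not prove Theorem~\ref{theorem:menger}; it quotes Menger's theorem as a classical result and refers the reader to \cite{Menger27} and to G\"oring~\cite{Goring00} for a short proof. So there is no ``paper's own proof'' to compare against.

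Your outline is the standard arc-deletion induction and is essentially correct as a sketch; it is in the same spirit as the short proof of G\"oring that the paper cites. The caveats you raise at the end are exactly the places where real work is needed. Two points deserve care if you want to turn this into a complete argument. First, to guarantee that $D_1$ and $D_2$ have strictly fewer arcs than $D$, one usually does not just restrict to a vertex subset but rather \emph{contracts} the far side of the cut to a single vertex (so $D_1$ is obtained from $D$ by identifying everything on the $T$-side of $C_u$ with a new sink $t$, and symmetrically for $D_2$); one then argues that neither contraction is trivial, typically after first disposing of the easy case where the chosen arc $e=uv$ already has $u\in S$ or $v\in T$. Second, the claim that every minimum $(S,C_u)$-cut in $D_1$ has size at least $k$ rests on showing that any such cut is already an $(S,T)$-cut in $D$; this uses that from each vertex of $C_u$ one can reach $T$ in $D$ (via $e$ for $u$, and via the fact that $C'$ does not separate $S$ from $T$ in $D$ for the vertices of $C'$). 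Once these two points are nailed down, the splicing through $C'$ together with the arc $e$ goes through as you describe.
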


\begin{lemma}\label{lemma:path_system}
  Let~$D$ be a digraph, let~$S\subseteq V(D)$ be a nonempty subset of vertices of in-degree~0 in~$D$,
  and let~$T \subseteq V(D)$ such that $T\cap S = \emptyset$.
  If $d^+(v) \geq \Delta^-(D)$ for all~$v \in V(D)\setminus T$,
  then there exist $|S|$ vertex-disjoint $(S,T)$-dipaths  in~$D$.
 \end{lemma}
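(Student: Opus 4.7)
The plan is to apply Menger's theorem (Theorem~\ref{theorem:menger}) and obtain a contradiction by double counting arcs. Suppose, for contradiction, that $D$ admits no family of $|S|$ pairwise vertex-disjoint $(S,T)$-dipaths. Then there is an $(S,T)$-vertex-cut $C \subseteq V(D)$ with $|C| \leq |S|-1$.

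I would then define $R$ to be the set of vertices reachable from $S$ by a dipath in $D-C$, and collect three structural facts. First, $R \cap C = \emptyset$ by construction, so $|R \cup C| = |R|+|C|$. Second, $R \cap T = \emptyset$, since any element of $R \cap T$ would exhibit an $(S,T)$-dipath in $D-C$, contradicting that $C$ is a cut. Third, $S \subseteq R \cup C$: every $s \in S \setminus C$ belongs to $R$ via the trivial dipath of length $0$ starting at $s$. In particular $|R| \geq |S|-|C| \geq 1$.

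The main step is a double-counting of arcs. By the definition of $R$, every arc of $D$ whose tail lies in $R$ has its head in $R \cup C$ (an arc leaving $R \cup C$ would extend a path witnessing reachability, forcing its head into $R$). Hence
\[
\sum_{v \in R} d^+_D(v) \;\leq\; \sum_{v \in R \cup C} d^-_D(v).
\]
Because $R \subseteq V(D)\setminus T$, the hypothesis bounds the left-hand side below by $|R|\cdot \Delta^-(D)$. Because $S \subseteq R \cup C$ and every vertex of $S$ has in-degree $0$ in $D$, only the $|R|+|C|-|S|$ vertices of $(R \cup C)\setminus S$ contribute to the right-hand side, giving an upper bound of $(|R|+|C|-|S|)\cdot \Delta^-(D)$. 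Assuming $\Delta^-(D) \geq 1$, dividing out yields $|S| \leq |C|$, which contradicts the choice of $C$. The degenerate case $\Delta^-(D) = 0$ forces $D$ to be arc-less and is therefore irrelevant in the intended applications of the lemma.

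There is no real obstacle in this argument. The only care required is in verifying the three inclusions $R \cap C = \emptyset$, $R \cap T = \emptyset$, and $S \subseteq R \cup C$, together with the fact that arcs with tail in $R$ remain in $R \cup C$; each of these follows directly from how $R$ is defined, and then the counting is automatic.
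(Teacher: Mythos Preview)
Your proof is correct and follows essentially the same approach as the paper's: assume via Menger that a small $(S,T)$-cut exists, take the set of vertices reachable from $S$ outside the cut, and double-count arcs using the in-degree-$0$ condition on $S$ together with the out-degree lower bound on vertices outside $T$. The paper phrases the counting slightly differently (it bounds the number of arcs from the reachable set into the cut, rather than comparing $\sum_{v\in R} d^+(v)$ with $\sum_{v\in R\cup C} d^-(v)$), and, like you, it tacitly relies on $\Delta^-(D)>0$.
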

 \begin{proof}

  Suppose to the contrary that there do not exist $|S|$ vertex-disjoint $(S,T)$-dipaths  in~$D$.
  By Menger's theorem,  there exists a set of vertices~$X \subseteq V(D)$ with cardinality~$|X| < |S|$
  which is an $(S,T)$-vertex-cut.
  Let~$C$ be the set of vertices in~$D - X$ that are reachable in~$D$ by a dipath with initial vertex in~$S\setminus X$.
  Set $k=|X \cap S|$. Observe that  $k < |S|$.

Let us count the number $a(C,X)$ of arcs with tail in~$C$ and head in~$X$.
Since the vertices in~$S$ have in-degree~0 and every vertex in~$C$ has out-degree at least~$\Delta^-(D)$,
$$a(C,X) \geq |C| \cdot \Delta^-(D) - [|C| - (|S| - k)]\cdot \Delta^-(D) = (|S| -k)\cdot \Delta^-(D).$$
 Moreover, $a(C,X)$ is at most the number of arcs with head in~$X$ which is at most~$(|X|-k) \cdot \Delta^-(D)$, because the vertices in $S\cap X$ have in-degree $0$.
Hence $(|S| -k)\cdot \Delta^-(D) \leq a(C,X) \leq (|X|-k) \cdot \Delta^-(D)$. This is a contradiction to $|X|<|S|$.
 \end{proof}

 Let~$k$ and~$\ell$ be positive integers.
 The {\it $\ell$-branching in-arborescence of depth~$k$}, denoted by $B(k,\ell)$, is defined by induction as follows.
 \begin{itemize}
 \item $B(0,\ell)$ is the in-arborescence with a single vertex, which is the root and the leaf of $B(0,\ell)$.
 \item $B(k,\ell)$ is obtained from $B(k-1,\ell)$ by taking each leaf of $B(k-1,\ell)$ in turn and adding $\ell$ new vertices dominating this leaf. The root of $B(k,\ell)$ is the one of $B(k-1,\ell)$, and the leaves of $B(k,\ell)$ are the newly added vertices, that is, those not in $V(B(k-1,\ell))$.
 \end{itemize}
 The number of vertices of $B(k,\ell)$ is denoted by~$b(k,\ell)$; so $b(k,\ell) = \sum_{i=0}^{k} \ell^i=\frac {1-\ell^{k+1}} {1-\ell}$.

Observe that every in-arborescence $T$ is a subdigraph of $B(k,\ell)$ with $k=\Delta^-(T)$ and
$\ell$ the maximum length of a dipath in $T$.
Therefore to prove that in-arboresences are $\delta^+$-maderian, it suffices to show that $B(k,\ell)$ is $\delta^+$-maderian for all $k$ and $\ell$.

 We define a recursive function~$f \colon \N \to \N$ as follows. For all positive integers~$k$ and~$\ell$
 such that~$\ell\geq 2$,
 $f(1, \ell) = \ell$ and, for~$k\geq 2$, we define
\[f(k, \ell)   = t(k,\ell) \cdot (\ell-1) \cdot k + t(k,\ell),\] where
 $t(k,\ell) := f(k-1, \: b(k-1,\ell)\cdot(\ell-1) +1)\cdot b(k-1, \ell)$.


If ${\cal D}$ is a family of digraphs, a {\it packing} of elements of ${\cal D}$ is the disjoint union of copies of elements of~${\cal D}$.

\begin{theorem}\label{inTrees}
 Let~$k$ and~$\ell\geq 2$ be positive integers, and let $D$ be a digraph with~$\delta^+(D) \geq f(k,\ell)$.
 Then~$D$ contains a subdivision of $B(k,\ell)$.
\end{theorem}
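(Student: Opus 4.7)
The plan is to prove the theorem by induction on $k$, with $\ell\geq 2$ arbitrary. For the base case $k=1$, we have $f(1,\ell)=\ell$; since $\sum_v d^+(v)=\sum_v d^-(v)$, the assumption $\delta^+(D)\geq \ell$ implies that some vertex $v$ of $D$ has $d^-(v)\geq \ell$, and $v$ together with $\ell$ of its in-neighbors is the desired subdivision of $B(1,\ell)$.

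For the inductive step, assume the theorem holds for $k-1$ and all $\ell'\geq 2$, and let $D$ be a digraph with $\delta^+(D)\geq f(k,\ell)$. A first useful observation is the arithmetic identity $b(k-1,\ell)(\ell-1)+1=\ell^k$, which follows from the closed form $b(k-1,\ell)=(\ell^k-1)/(\ell-1)$. Thus $t(k,\ell)=f(k-1,\ell^k)\cdot b(k-1,\ell)$, and the inner inductive call will be made with the larger branching parameter $\ell^k$ rather than $\ell$.

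My intended construction proceeds in two phases. In \textbf{Phase 1} I apply the induction hypothesis (using $\delta^+(D)\geq f(k-1,\ell^k)$, which follows from $f(k,\ell)\geq t(k,\ell)\geq f(k-1,\ell^k)$) to find a subdivision $T_0$ of the over-branched in-arborescence $B(k-1,\ell^k)$. In \textbf{Phase 2} I extract from $T_0$ a subdivision $T_1$ of $B(k-1,\ell)$ by retaining $\ell$ branches out of every $\ell^k$ at each internal node, and then extend each of its $\ell^{k-1}$ leaves by $\ell$ internally disjoint in-dipaths to obtain a subdivision of $B(k,\ell)$. The attachment is done via Lemma~\ref{lemma:path_system}, possibly in several rounds: one restricts to a subdigraph in which $\Delta^-$ is bounded and which has a set $S$ of in-degree-$0$ vertices of cardinality at least the number of new paths required, after which the lemma yields the needed vertex-disjoint $(S,T)$-dipaths ending in the prescribed leaves.

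The main obstacle is the careful bookkeeping of out-degree losses. Each restriction of the subdigraph (to bound $\Delta^-$ or to create in-degree-$0$ sources) and each removal of used vertices (the interior of $T_1$ and of previously placed dipaths) costs out-degree. The formula $f(k,\ell)=t(k,\ell)\cdot[(\ell-1)k+1]$ is calibrated so that these losses add up correctly: the summand $t(k,\ell)$ preserves the Phase 1 inductive call, while $t(k,\ell)\cdot(\ell-1)\cdot k$ covers $k$ levels of attachment with at most $\ell-1$ extra prunings per level. A subtler point I expect to be delicate is enforcing that the $\ell^k$ extra in-dipaths terminate at the correct leaves \emph{with the correct multiplicity $\ell$ per leaf} rather than being freely distributed among them; this can be handled either by iterating Lemma~\ref{lemma:path_system} leaf-by-leaf (asking for $\ell$ disjoint in-dipaths to one leaf at a time) or by an auxiliary construction with duplicated target vertices, in either case keeping the residual minimum out-degree above $f(k-1,\ell^k)$ throughout.
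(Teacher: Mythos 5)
Your proposal diverges from the paper's proof in a fundamental structural way, and the divergence introduces a gap that I don't believe can be patched with the given function $f$. The paper does \emph{not} begin by applying the inductive hypothesis to $D$ itself. Instead it first fixes a maximal packing $\mathcal{F}$ of $\ell$-branching in-arborescences in $D$, observes that each member has at most $b(k-1,\ell)$ vertices (else it already contains $B(k,\ell)$), and then builds an auxiliary digraph $H$ on the set $R$ of roots. The induction is applied to $H$, producing a subdivision of $B(k-1,p)$ with $p=b(k-1,\ell)(\ell-1)+1$ \emph{inside the contracted world of roots}. The maximality of $\mathcal{F}$ is exactly what gives the two facts your sketch needs but never establishes: it bounds $\Delta^-$ of the auxiliary structure (each uncovered vertex has at most $\ell-1$ in-neighbours in $U$ and at most $\ell-1$ roots of arborescences of each depth dominating it, so $\Delta^-(G)\le(\ell-1)k$), and it keeps every object being deleted or rerouted of \emph{bounded size} (at most $b(k-1,\ell)$ vertices per arborescence), so the out-degree bookkeeping closes.

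Your Phase 1 produces a subdivision $T_0$ of $B(k-1,\ell^k)$ directly in $D$. Such a subdivision has no a priori bound on its number of vertices --- the subdivided arcs can be arbitrarily long dipaths. Consequently, after you delete the interior of $T_1\subseteq T_0$ to make room for the $\ell^k$ attachment paths, the residual minimum out-degree can be \emph{zero}, no matter how large $f(k,\ell)$ was. The calibration you cite, "$t(k,\ell)\cdot(\ell-1)\cdot k$ covers $k$ levels of attachment," does not engage with this: the loss you need to cover is $|V(T_1)|$, which is unbounded, not something proportional to $t(k,\ell)$. The same unboundedness defeats the leaf-by-leaf iteration of Lemma~\ref{lemma:path_system}: each round removes paths of uncontrolled length. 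Finally, even setting size aside, you never say how you would produce a subdigraph in which $\Delta^-$ is bounded while keeping $\delta^+$ large; in the paper this bound is a structural consequence of maximality of the packing, not something one can impose by "restricting to a subdigraph" in general. Your arithmetic observation $b(k-1,\ell)(\ell-1)+1=\ell^k$ is correct, and Phase 1 is legitimate as far as it goes, but the overall plan is missing the packing-and-contraction idea that makes the vertex budget closeable, and I do not see how to repair it without essentially reintroducing that idea.
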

\begin{proof}
  We prove the result by induction on~$k$ and~$\ell$.
  If~$k=1$, then~$\delta^+(D) \geq \ell$.
  Thus, $\Delta^-(D) \cdot |V(D)| \geq \sum_{v \in V(D)} d^-(v) = \sum_{v \in V(D)} d^+(v) \geq \ell\cdot|V(D)|$. Hence
 there is a vertex with in-degree at least~$\ell$ in~$D$ and, consequently, the result follows when~$k=1$.
  Assume now~$k\geq 2$, and suppose that, for every positive integers~$k^\prime<k$ and~$\ell^\prime$, and every digraph~$H$
  with~$\delta^+(H) \geq f(k^\prime, \ell^\prime)$, $H$ contains a subdivision of
  the  $\ell^\prime$-branching in-arborescence of depth~$k^\prime$.

  Let~$\mathcal{F}$ be a packing of $\ell$-branching in-arborescences subdigraphs 
  of any non-zero depth in $D$ that covers the maximum number of vertices.
  We denote by~$U \subseteq V(D)$ the set of vertices not covered by~$\mathcal{F}$, that is,
  $U = V(D) \setminus \bigcup_{ A \in \mathcal{F}} V(  A)$.
  Let~$r_{  A}$ denote the root of the in-arborescence~$  A$, for each~$  A \in \mathcal{F}$,
  and let~$R=\{r_{  A} \in V(D) \colon   A \in \mathcal{F}\}$ be the set of the roots of the arborescences in~$\mathcal{F}$.

  We now construct the digraph~$H$ with vertex set~$R$ such that there exists an arc~$(r_{  A}, r_{  B})$ in~$H$ if and only if $r_{A}$ dominates some vertex of~$V(B)$ in~$D$.

\begin{claim}\label{claim:induction_step}
  If~$\delta^+(H) \geq t(k,\ell)/b(k-1, \ell)$,
  then $D$ contains a subdivision of  $B(k,\ell)$.
\end{claim}
\begin{subproof}
Let $p=b(k-1,\ell)\cdot(\ell-1) +1$. 
By the induction hypothesis,~$  H$ contains  a subdivision~$T$ of $B(k-1, p)$.
  Let~$R^\prime$ be the set of branching vertices of~$  T$, that is, $R' = \{ r \in T \colon d^-_{  T}(r) = p\}$.
  We assume that each in-arborescence of $\mathcal F$ has at most~$b(k-1,\ell)$ vertices, as any larger arborescence would yield the theorem.
  Thus, for each $r\in R^\prime$, there exists a vertex~$h_r$ in the in-arborescence rooted at~$r$
  such that~$h_r$ is dominated in~$D$ by $\ell$ vertices of~$V(T)$.
  Similarly, for each~$r \in V(T)$ with in-degree~1,   there exists a vertex~$h_r$ in the in-arborescence rooted at~$r$
  such that~$h_r$ is dominated in~$D$ by a vertex of~$V(T)$.
  Using these remarks, we next define a procedure to obtain a subdigraph of~$T$
  that is a subdivision of $B(k-1,\ell)$.

  For each~$r \in R^\prime$, we remove from~$  T$ all arcs with head~$r$ but exactly~$\ell$ arcs from vertices in~$V(T)$
  that dominate~$h_r$ in~$D$.
  We denote by~$  T^\prime$ the component of the subdigraph of~$T$ obtained by applying the above-described procedure 
  and that contains the root of~$  T$.
  One may easily verify that~$T^\prime$ is a subdivision of $B(k-1,\ell)$.
  Let~$P_r$ be the path from $h_r$ to~$r$ in the in-arborescence corresponding to~$r$, for every~$r \in V(T^\prime)$
  such that either~$r \in R^\prime$ or~$r$ has in-degree~1.

  Let~$Q$ be the in-arborescence
  obtained from~$T^\prime$  in the following way.
  For each~$r \in V(T^\prime)$ such that either~$r \in R^\prime$ or~$r$ has in-degree~1, we add~$h_r$ to~$T^\prime$,
  and we add an arc from every in-neighbour of~$r$ in~$T^\prime$ to~$h_r$.
  Additionally, we remove all arcs with head~$r$ in~$T^\prime$, and
  link~$h_r$ to~$r$ by using the dipath~$P_r$.
  Finally, for each~$r \in V(T^\prime)$ that is a leaf, we replace~$r$ by its corresponding in-arborescence belonging to~$\mathcal{F}$.

  By this construction, we have that~$Q$ is a subdigraph of~$D$ such that every internal vertex
  has either in-degree~$\ell$ or~1.
  Furthermore, it has depth at least~$k$.
  Therefore, by possibly pruning some levels of~$Q$, we obtain a subdivision of $B(k,\ell)$.
\end{subproof}

  Suppose now~$\delta^+(  H) < t(k,\ell)/b(k-1, \ell)$.
  Observe that, for every~$v \in R$ such that~$d^+_H(v) < t(k,\ell)/b(k-1, \ell)$,
  we have, in the digraph~$D$, that~$d^+_U(v) \geq t(k,\ell) \cdot (\ell-1) \cdot k$
  since~$\delta^+(D) \geq t(k,\ell) \cdot (\ell-1) \cdot k + t(k,\ell)$.
  We define \[X = \{v \in R \colon d^+_U(v) \geq t(k,\ell) \cdot (\ell-1) \cdot k\}.\]

  Let~$D^\prime$ be the digraph obtained from~$D[U \cup X]$ by removing all arcs with head in~$X$.
  From~$D^\prime$, we construct a digraph~$G$ by replacing every vertex~$v \in X$
  by $t(k,\ell)$ new vertices~$v_1, \ldots, v_{t(k,\ell)}$, and adding, for each~$i\in [t(k,\ell)]$,
  at least~$(\ell-1) \cdot k$ 
  arcs from~$v_i$ to~$N^+_{D^\prime}(v)$ in such a way
  that~$d^-_{D^\prime}(u) = d^-_G(u)$, for all~$u \in N^+_{D^\prime}(v)$.
  In other words, we ``redistribute'' the out-neighbours of~$v$ in~$D^\prime$ among
  its~$t(k,\ell)$ copies in~$G$ so that every copy has out-degree 
  at least~$(\ell-1) \cdot k$, and
  the in-degrees of vertices belonging to~$U$ are not changed.
  Let~$S \subseteq V(G)$ be the set of vertices that replaced those of $X$, that is,
  $S = \bigcup_{v \in X } \; \{v_1,\dots,v_{t(k,\ell)}\}$.
  Let~$T$ be the set of vertices in~$U$ that have large out-degree outside~$U$ in the digraph~$D$, more formally,
  $T = \{ v \in U \colon d^+_{V(D)\setminus U}(v) \geq t(k,\ell)+1 \}$.

  For every~$i \in [k-1]$, let~$\mathcal{F}_i = \{A \in \mathcal{F} \colon A \text{ has depth exactly } i\}$.
  Note that $\{\mathcal{F}_i\}_{i \in [k-1]}$ forms a partition of the packing~$\mathcal{F}$.
  Additionally, observe that, due to the maximality of~$\mathcal{F}$,
  every vertex in~$U$ is dominated by at most~$\ell-1$ vertices belonging to~$U$, and by at most~$\ell-1$ 
  roots of in-arborescences in~$\mathcal{F}_i$, for each~$i\in [k-1]$.
  Thus, the in-degree in~$G$ of every vertex belonging to~$U$ is at most~$(\ell-1) + (\ell-1) \cdot (k-1) = (\ell-1) \cdot k$.
Therefore, we have~$\Delta^-(G) \leq (\ell-1) \cdot k$.
Moreover, since~$\delta^+(D)\geq t(k,\ell) \cdot (\ell-1) \cdot k + t(k,\ell)$,
we have~$d^+_G(v) \geq t(k,\ell) \cdot (\ell-1) \cdot k$ for every~$v \in U\setminus T$.
Hence, $d^+_G(v) \geq (\ell-1) \cdot k$, for every~$v \in V(G) \setminus T$.
  By Lemma~\ref{lemma:path_system}, there exists a set~$\mathcal{P}$ of~$|S|$ vertex-disjoint paths from~$S$ to~$T$  in~$G$.

  Note that, in~$D$, every vertex belonging to~$T$ has at least~$t(k,\ell)+1$ out-neighbours in~$V(D)\setminus U$.
  Therefore one can greedily extend each path of ${\cal P}$ with an out-neighbour of its terminal vertex in $V(D)\setminus U$ in order to obtain a set~$\mathcal{P}^\prime$ of $|S|$ vertex-disjoint paths from $S$ to $V(D)\setminus U$ such that for any $v\in X$ all the paths in ${\mathcal P}'$ with initial vertex $v$ have distinct terminal vertices (and different from $v$).
  


  We now construct the digraph~$M$ on the vertex set~$R$ where there exists an
  arc from $v=r_A$ to $r_B$ in~$M$ whenever
  \begin{itemize}
  \item[] either $r_A$ dominates some vertex of $V(B)$ in $D$,
  \item[] or
  there is a dipath from some $v_i$ to $V(B)$ in ${\mathcal P}'$.
 \end{itemize}   
  Since, for each~$v \in X$, all vertices in~$\{v_i\}_{i \in [t(k,\ell)]}$ are the initial vertices of vertex-disjoint dipaths in~$\mathcal{P}^\prime$,
  we obtain~$\delta^+(M) \geq t(k,\ell)/b(k-1, \ell)$.
  Therefore, the result follows by Claim~\ref{claim:induction_step} with~$M$ playing the role of~$H$.
\end{proof}


\subsection{Cycles with two blocks}

We denote by $C(k_1, k_2)$ the digraph which is the union of two internally disjoint dipaths, 
one of length $k_1$ and one of length $k_2$ with the same initial vertex and same terminal vertex.
$C(k_1,k_2)$ may also be seen as an oriented cycles with two blocks, one of length $k_1$ and one of length $k_2$. Recall that the {\it blocks} of an oriented cycle are its maximal directed subpaths.

\begin{theorem}\label{C(k,k)}
Let $D$ be a digraph with $\delta^+(D) \geq 2(k_1+k_2)-1$. Then $D$ contains a subdivision of $C(k_1,k_2)$.
\end{theorem}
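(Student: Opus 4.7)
My plan is to analyze a longest dipath of $D$ and exploit the back-arc structure at its endpoint. Let $P = v_0 \to v_1 \to \cdots \to v_\ell$ be a longest dipath in $D$. A standard greedy argument gives $\ell \geq \delta^+(D) \geq 2(k_1+k_2)-1$, and the maximality of $P$ forces every out-neighbor of $v_\ell$ to lie in $V(P)\setminus\{v_\ell\}$. Hence $v_\ell$ has at least $d := 2(k_1+k_2)-1$ back-arcs, say $v_\ell \to v_{j_1}, \ldots, v_\ell \to v_{j_d}$ with $j_1 < \cdots < j_d < \ell$; in particular the cycle $v_{j_1} \to \cdots \to v_\ell \to v_{j_1}$ has length at least $2(k_1+k_2)$.

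I will then choose indices $s < t$ in $\{1,\ldots,d\}$ to produce two internally disjoint dipaths from $x := v_{j_t}$ to $y := v_{j_s}$ of lengths $\geq k_2$ and $\geq k_1$ respectively. A pigeonhole on the positions $j_1,\ldots,j_d$ shows that at most $k_2-2$ of them lie in the interval $[\ell-k_2+2,\ell-1]$, so there are at least $2k_1+k_2+1$ back-arcs satisfying $j_i \leq \ell-k_2+1$; pick $t$ to be the largest such index, leaving $\geq 2k_1+k_2$ candidates for $s$. The first dipath is the natural one through $v_\ell$,
\[Q_1 \;:\; v_{j_t} \to v_{j_t+1} \to \cdots \to v_\ell \to v_{j_s},\]
of length $\ell - j_t + 1 \geq k_2$, with internal vertex set $\{v_{j_t+1},\ldots,v_\ell\}$.

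The second dipath $Q_2$ must avoid that internal set, so I search for it in the subdigraph $D' = D - \{v_{j_t+1},\ldots,v_\ell\}$; removing at most $k_2-1$ vertices yields $\delta^+(D') \geq 2k_1+k_2$. A greedy longest-dipath argument in $D'$ starting at $v_{j_t}$ then produces a dipath $R$ of length at least $2k_1+k_2$. If the terminal of $R$ already is some $v_{j_s}$ with $s<t$, setting $Q_2 := R$ (or a suitable truncation) finishes the construction. Otherwise, the terminal $z$ of $R$ has all its $D'$-out-neighbors on $R$ by the maximality of $R$, producing on $R$ a second-level back-arc structure analogous to the one $v_\ell$ provides on $P$; I would combine those new back-arcs with the original ones at $v_\ell$ in order to force $Q_2$ to land on some $v_{j_s}$ with $s<t$. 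I expect this ``landing step'' to be the technical heart of the argument, and plan to close it either by a second delicate pigeonhole on the back-arcs at $z$, or by a minimal-counterexample reduction in which one removes an arc incident to a minimum-out-degree vertex and re-invokes the hypothesis on the smaller digraph.
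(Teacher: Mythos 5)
Your proposal is not a complete proof, and it contains a concrete error before it reaches the part you flag as unfinished.

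The claim that ``removing at most $k_2-1$ vertices yields $\delta^+(D') \geq 2k_1+k_2$'' is false. The removed set $\{v_{j_t+1},\ldots,v_\ell\}$ has exactly $\ell - j_t$ vertices, and your choice of $t$ only guarantees $j_t \leq \ell-k_2+1$, i.e.\ $\ell - j_t \geq k_2-1$; it gives no upper bound. If, say, all the back-arcs of $v_\ell$ point far back on $P$ (for instance $j_1,\ldots,j_d$ all much smaller than $\ell - k_2+1$), then $\ell - j_t$ is large, the removal destroys most of the out-degree, and $\delta^+(D')$ can even drop to $0$. Nothing in the longest-path setup prevents this, so the whole subsequent greedy search for a length-$\geq 2k_1+k_2$ dipath from $v_{j_t}$ in $D'$ is unsupported. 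Beyond that, the ``landing step'' you describe as the technical heart is entirely open: you give no reason why the endpoint $z$ of $R$ should have a back-arc structure that forces an arrival at some $v_{j_s}$ with $s<t$, and the two suggested closing strategies (a second pigeonhole or a minimal-counterexample reduction) are not carried out. What you have is a plausible-sounding opening move, not an argument.

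The paper sidesteps exactly the difficulty your approach runs into. Instead of a longest dipath with all structure piled at one endpoint, it maximizes an $(\ell,k_1,k_2)$-\emph{fork}: a stem $A=(a_0,\ldots,a_\ell)$ from which two branches $B^1, B^2$ of \emph{fixed} lengths $k_1-1$ and $k_2-1$ emanate at $a_\ell$. Because the branch lengths are frozen while only $\ell$ grows, the ``off-stem'' part of the configuration always has bounded size (roughly $k_1+k_2$), so the out-degree hypothesis always leaves fresh vertices to extend into; the case analysis on where the branch tips (and their extensions $C^1,\ldots,C^4$) send their out-neighbours then either contradicts the maximality of $\ell$ or produces the two internally disjoint paths directly. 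Your version, by contrast, lets a single interval of $P$ grow without bound inside the obstacle set, and it is precisely that loss of control that breaks the out-degree bookkeeping. If you want to salvage the longest-path idea, you would need to replace the removal of $\{v_{j_t+1},\ldots,v_\ell\}$ by a device that keeps the forbidden set of size $O(k_1+k_2)$, which is essentially what the fork formalism achieves.
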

\begin{proof}
Let us assume, without loss of generality, that $k_1 \geq k_2$. 
Let $\ell$ be a positive integer.
An {\it $(\ell,k_1,k_2)$-fork} is a digraph obtained from the union of three disjoint  dipaths $A=(a_0,a_1,\cdots,a_{\ell})$, $B^1=(b^1_1,\cdots,b^1_{k_1-1})$ and $B^2=(b^2_1,\cdots,b^2_{k_2-1})$ by adding the arcs $(a_{\ell},b^1_1)$ and $(a_{\ell},b^2_1)$.

Since a $(1,k_1,k_2)$-fork has $k_1+k_2+1$ vertices and $\delta^+(D) \geq k_1+k_2+1$, 
then $D$ contains a $(1,k_1,k_2)$-fork as a subdigraph.
Let $\ell \geq 1$ be the largest integer such that $D$ contains an $(\ell,k_1,k_2)$-fork as a subdigraph.
Let $F$ be such a fork. For convenience, we denote its subpaths and vertices by their labels in the above definition.

If there exist $i,j \in [\ell]\cup \{0\}$, where~$i\leq j$ (resp. $j \leq i $),
such that $a_i \in N^+(b^1_{k_1-1})$ and $a_j \in N^+(b^2_{k_2-1})$, 
then the union of the dipaths $(a_{\ell},B^1,a_i,\cdots,a_j)$ and $(a_{\ell},B^2,a_j)$ (resp.  $(a_{\ell},B^1,a_i)$ and $(a_{\ell},B^2,a_j,\cdots,a_i)$)  is a subdivision of $C(k_1,k_2)$.

Suppose now that~$b^1_{k_1-1}$ has no out-neighbour in $\{a_0,\cdots,a_{\ell-1}\}$, that is,
$N^+(b^1_{k_1-1}) \cap (A \setminus \{a_{\ell}\}) = \emptyset$ (the case $N^+(b^2_{k_2-1}) \cap (A \setminus \{a_{\ell}\}) = \emptyset$ is similar). Since $|B^1\cup B^2 \cup \{a_{\ell}\}|=k_1+k_2-1$ and $\delta^+(D) \geq 2(k_1+k_2-1)+1$, $b^1_{k_1-1}$ has two distinct out-neighbours, say $c^1_1$ and $c^2_1$, not in $F$. 

Let $i_1 \geq 1$ be the largest integer such that there exist two disjoint dipaths $C^1$ and $C^2$ in $D-F$ with initial 
vertex $c^1_1$ and $c^2_1$, respectively, and length $i_1$ and $i_2=\min \{k_2,i_1\}$.
Set $C^1=(c^1_1,\cdots,c^1_{i_1})$ and $C^2=(c^2_1,\cdots,c^2_{i_2})$. 
By maximality of $\ell$, if $i_1\geq k_2$,  then $i_1 <k_1-1$. 
Otherwise, the union  of $A \cup B^1$, $C^1$, $C^2$, $(b^1_{k_1-1}, c^1_1)$ and
$(b^1_{k_1-1}, c^2_1)$ would contain an ($\ell+k_1-1,k_1,k_2)$-fork, contradicting the maximality of~$\ell$.

Suppose to the contrary that both $c^1_{i_1}$ and $c^2_{i_2}$ have no out-neighbour in $A \setminus  \{a_{\ell}\}$.
Since  $|V(B^1) \cup V(B^2) \cup \{a_{\ell}\}\cup V(C^1) \cup V(C^2)|=k_1+k_2+i_1+i_2-1<2(k_1+k_2)-2$ 
(since $i_1+i_2<k_1+k_2-1$) and $\delta^+(D)\geq 2(k_1+k_2-1)+1$, 
then there exist $c^1_{i_1+1}, c^2_{i_2+1} \in V(D -(F \cup C^1\cup C^2))$ 
such that $(c^1_{i_1},c^1_{i_1+1}), (c^2_{i_2},c^2_{i_2+1}) \in A(D)$ and $c^1_{i_1+1}\neq  c^2_{i_2+1}$. 
This contradicts the maximality of $i_1$.
Henceforth, we assume that  $c^1_{i_1}$ has an out-neighbour $a_j \in A \setminus \{a_{\ell}\}$ for some $0 \leq j < \ell$. 
The case  in which~$c^2_{i_2}$  has an out-neighbour in $A \setminus \{a_{\ell}\}$ is similar.

If $b^2_{k_2-1}$ has also an out-neighbour $a_m \in A \setminus \{a_{\ell}\}$, then the union of the dipaths $(a_{\ell},B^1,C^1,a_j,\dots,a_m)$ and $(a_{\ell},B^2,a_m)$ (if $m \geq j$) or of the dipaths $(a_{\ell},B^1,C^1,a_j)$ and $(a_{\ell},B^2,a_m,\dots,a_j)$ (if $m<j$) is a subdivision of $C(k_1,k_2)$.

If $b^2_{k_2-1}$ has an out-neighbour $z \in V(C^1\cup C^2)$, say $z=c^1_h$ for some $h \leq i_1$
(the case in which $z \in V(C^2)$ is similar), 
then the union of the dipaths $(a_{\ell},B^2,c^1_h)$ and $(a_{\ell},B,c^1_1,\cdots,c^1_h)$ is a subdivision of $C(k_1,k_2)$.

So, we may assume that $b^2_{k_2-1}$ has no out-neighbour in $A \setminus \{a_{\ell}\} \cup C^1\cup C^2$. 
Hence, $b^2_{k_2-1}$ has two distinct out-neighbours, say $c^3_1$ and $c^4_1$, not in $F \cup C^1\cup C^2$. 
Let $i_3 \geq 1$ be the largest integer such that
there exist two disjoint dipaths $C^3$ and $C^4$ in $D-(F\cup C^1 \cup C^2)$ with initial vertex $c^3_1$ and $c^4_1$, respectively,
and length $i_3$ and $i_4=\min \{k_2,i_3\}$.  
By the maximality of $\ell$, if $i_4 \geq k_2$ then $i_3 <k_1-1$ since otherwise the union of
$A \cup B^2$, $C^3$, $C^3$, $(b^2_{k_2-1}, c^3_1)$ and
$(b^2_{k_2-1}, c^4_1)$ would contain an ($\ell+k_1-1,k_1,k_2)$-fork, contradicting the maximality of $\ell$.

 For sake of contradiction, assume that both $c^3_{i_3}$ and $c^4_{i_4}$ have no out-neighbour in $(A \setminus  \{a_{\ell}\}) \cup C^1\cup C^2$. Because  $|V(B^1) \cup V(B^2) \cup V(C^3) \cup V(C^4) \cup  \{a_{\ell}\}|=k_1+k_2+i_3+i_4-1<2(k_1+k_2)-2$ (since $i_3+i_4 < k_1+ k_2-1$) and $\delta^+(D)\geq 2(k_1+k_2-1)+1$, then there exist distinct vertices $c^3_{i_3+1}, c^4_{i_4+1} \in V(D -(F \cup C^1 \cup C^2 \cup  C^3 \cup C^4))$ (if $i_3 \geq k_2$, we only define $c_{i_3+1}$) such that $(c^3_{i_3},c^3_{i_3+1}), (c^4_{i_4},c^4_{i_4+1}) \in A(D)$. This contradicts the maximality of $i_3$.

So one of $c^3_{i_3}, c^4_{i_4}$ has an out-neighbour in $(A \setminus  \{a_{\ell}\}) \cup C^1\cup C^2$.
We assume that it is $c^3_{i_3}$; the case when it is $c^4_{i_4}$ is similar.

If  $c^3_{i_3}$ has an out-neighbour $a_q \in A \setminus  \{a_{\ell}\}$ (for some $q < \ell$), then  the union of either the dipaths $(a_{\ell},B^1,C^1,a_j,\ldots,a_q)$ and $(a_{\ell},B^2,C^3,a_q)$ (if $q \geq j$), or the dipaths $(a_{\ell},B^1,C^1,a_j)$ and $(a_{\ell},B^2,C^3,a_q,\cdots,a_j)$ (if $q<j$), is a subdivision of $C(k_1,k_2)$.

If  $c^3_{i_3}$ has an out-neighbour $c^1_h \in V(C^1)$  for some $1\leq h \leq i_1$, then the union of the dipaths $(a_{\ell},B^1,c^1_1,\cdots,c^1_h)$ and $(a_{\ell},B^2,C^3,c^1_h)$ is a subdivision of $C(k_1,k_2)$.
Similarly, we find a  subdivision of $C(k_1,k_2)$ if $c^3_{i_3}$ has an out-neighbour in $C^2$.
\end{proof}

Theorem~\ref{C(k,k)} shows that an oriented cycle with two blocks $C$ is maderian and
\mbox{$\mad_{\delta^+}(C)\leq 2|V(C)|-1$}. A natural question is to ask whether this upper bound is tight or not.

\begin{problem}
What it the value of $\mad_{\delta^+}(C(k_1,k_2))$ ?
\end{problem}

\begin{proposition}
For any positive integer $k$, $\mad_{\delta^+}(C(k,1))=\mad_{\delta^0}(C(k,1))=k$.
\end{proposition}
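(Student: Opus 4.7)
My plan is to establish the two bounds separately.

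\textbf{Lower bound.} The complete digraph $\vec{K}_k$ on $k$ vertices satisfies $\delta^0(\vec{K}_k)=k-1$. Since $C(k,1)$ itself has $k+1$ vertices, every subdivision of it also has at least $k+1$ vertices, so $\vec{K}_k$ cannot contain any subdivision of $C(k,1)$. This yields $\mad_{\delta^0}(C(k,1))\ge k$, and since $\mad_{\delta^+}\ge\mad_{\delta^0}$ in general, also $\mad_{\delta^+}(C(k,1))\ge k$.

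\textbf{Upper bound.} It suffices to show that every digraph $D$ with $\delta^+(D)\ge k$ contains a subdivision of $C(k,1)$. I would take a longest directed path $P=v_0 v_1\dots v_\ell$ in $D$. Since $P$ cannot be extended, every out-neighbour of $v_\ell$ must lie in $V(P)\setminus\{v_\ell\}$; combining this with $d^+(v_\ell)\ge k$ gives $\ell\ge k$ and guarantees at least $k$ back-arcs $v_\ell\to v_{i_1},\ldots,v_\ell\to v_{i_k}$ with $0\le i_1<i_2<\dots<i_k\le\ell-1$. In particular, $i_k-i_1\ge k-1$.

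The key observation is that these back-arcs immediately produce the required subdivision. Between $u=v_\ell$ and $v=v_{i_k}$ I can exhibit, on the one hand, the arc $v_\ell\to v_{i_k}$ (length~$1$), and on the other hand the dipath $v_\ell\to v_{i_1}\to v_{i_1+1}\to\cdots\to v_{i_k}$ obtained by first using the earliest back-arc $v_\ell\to v_{i_1}$ and then following $P$ from $v_{i_1}$ up to $v_{i_k}$; its length is $1+(i_k-i_1)\ge k$. These two $(u,v)$-dipaths are internally disjoint, since the arc has no internal vertices, so together they form a subdivision of $C(k,1)$.

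There is essentially no obstacle once one chooses the specific pair $(u,v)=(v_\ell,v_{i_k})$: the constraint that $v_\ell$'s out-neighbours are confined to $V(P)$ provides the entire back-arc pattern, and the pigeonhole spacing $i_k-i_1\ge k-1$ is precisely what forces the long dipath to have length at least~$k$. The only routine detail I would verify is the simplicity of the long dipath, which follows from $v_\ell\notin\{v_{i_1},\ldots,v_{i_k}\}$ together with the distinctness of the indices $i_1,i_1+1,\ldots,i_k$.
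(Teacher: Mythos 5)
Your proof is correct and follows essentially the same approach as the paper: the same complete-digraph counting for the lower bound, and for the upper bound the same choice of a longest dipath, the observation that the terminal vertex's out-neighbours are confined to the path, and then splicing the first and last back-arcs with the path segment between them. The paper phrases it via the first and last out-neighbours $v,w$ with $P[v,w]$ of length at least $k-1$, which is exactly your $i_k - i_1 \ge k-1$ pigeonhole observation.
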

\begin{proof}
The complete digraph on $k$ vertices has minimum in- and out-degree $k-1$, and it trivially contains no subdivision of $C(k,1)$ because it has less vertices than $C(k,1)$. Hence $\mad_{\delta^+}(C(k,1))\geq \mad_{\delta^0}(C(k,1))\geq k$.

Consider now a digraph $D$ with $\delta^+(D)\geq k$.
Let $P$ be a longest dipath in $D$ and let $u$ be its terminal vertex.
Necessarily $N^+(u)\subseteq V(P)$. Let $v$ (resp. $w$) be the first (resp. last) vertex of $N^+(u)$ along~$P$.
The path $P[v,w]$ contains all vertices of $N^+(u)$, so it has length at least $k-1$. Hence the union of $(u,v)\cup P[v,w]$ and
$(u,w)$ is a subdivision of $C(k,1)$.
\end{proof}

A step further towards Conjecture~\ref{mader} would be to prove that every oriented cycle is $\delta^+$-maderian. We even conjecture that  for every oriented cycle $C$ $\mad_{\delta^+}(C)\leq 2|V(C)|-1$.

\begin{conjecture}
Let $D$ be a digraph with $\delta^+(D) \geq 2k-1$. Then $D$ contains a subdivision of any oriented cycle of order $k$
\end{conjecture}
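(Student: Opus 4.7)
The plan is to argue by induction on the number of blocks $b$ of the oriented cycle $C$ of order $k$ with block lengths $k_1,\ldots,k_b$ (so $\sum_i k_i = k$ and $b$ is even). The base case $b=2$ is exactly Theorem~\ref{C(k,k)}, which gives $\mad_{\delta^+}(C(k_1,k_2)) \leq 2(k_1+k_2)-1 = 2k-1$.

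For the inductive step with $b \geq 4$, I would generalize the fork technique of Theorem~\ref{C(k,k)}. The aim is to build, at an extremal configuration, a structure consisting of a long oriented ``trunk'' realizing the first $b-1$ blocks, together with two disjoint dipaths of length $k_b$ hanging off an appropriate endpoint of the trunk in the direction of block~$k_b$. Concretely, fix a subdivision $W$ of the path $P(k_1^\prime, k_2, \ldots, k_{b-1})$ obtained from $C$ by deleting its last block (the existence of such a trunk starting at any vertex is guaranteed by Theorem~\ref{paths} since $\delta^+(D)\geq 2k-1 \geq \sum_{i<b} k_i$), and chosen so that the extension-plus-closure structure is extremal. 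Then grow from the appropriate endpoint of $W$ a two-pronged forest in the spirit of the $(\ell,k_1,k_2)$-fork. At the far end of each prong, either some arc closes back to the trunk in a way that completes a subdivision of $C$, or we can extend the prongs further, contradicting extremality.

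The critical observation, as in the 2-block proof, is that $\delta^+(D)\geq 2k-1$ leaves, after accounting for the vertices of the trunk, the endpoint's neighborhood and both prongs, a surplus of out-degree of order $k$, so that one can always either extend or close. The case analysis then branches on whether each prong-endpoint sends an arc to the trunk, to the other prong, or to a fresh vertex outside the current structure, paralleling the argument with $c^1_{i_1},c^2_{i_2},c^3_{i_3},c^4_{i_4}$ in Theorem~\ref{C(k,k)}.

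The main obstacle, I expect, is controlling how the closure interacts with \emph{all} of the $b-1$ previously placed blocks rather than just two. In the 2-block case, the trunk was a single dipath, so any arc from a prong endpoint back into it could be used. For $b\geq 4$ the trunk zigzags, and when a prong sends an arc into a block of the ``wrong'' orientation or position the resulting oriented closed walk may fail to be a subdivision of $C$ with the prescribed block sequence. A natural attempt is to ``straighten'' the problem by contracting each internal block, reducing to a 2-block cycle in an auxiliary digraph in which out-degree is preserved up to an additive constant depending only on $k$; whether such a reduction can be made to preserve the sharp bound $2k-1$ (rather than blow it up by a factor depending on $b$) is the delicate point, and the reason the conjecture remains open.
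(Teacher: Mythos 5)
This statement is labelled a \emph{conjecture} in the paper; the authors give no proof of it, so there is no argument to compare yours against. What the paper actually proves is only the base of your intended induction: Theorem~\ref{C(k,k)} gives $\mad_{\delta^+}(C(k_1,k_2))\leq 2(k_1+k_2)-1$ for two-block oriented cycles, and the proposition following it determines $\mad_{\delta^+}(C(k,1))=k$ exactly. (Directed cycles are also easy, $\mad_{\delta^+}(\vec{C}_k)=k-1$.) The general oriented-cycle case is left open, and the text explicitly says so.

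Your proposal is an outline rather than a proof, and you say as much. You have put your finger on the right obstruction to scaling up the fork argument: once the trunk realizes $b-1$ blocks it zigzags, and a back-arc from a prong tip into the interior of the trunk need not close an oriented walk with the block pattern of $C$, whereas in the two-block case the trunk $A$ is a single dipath so every back-arc is usable. Your contraction idea founders for the reason you note: collapsing blocks perturbs out-degrees by amounts that depend on block lengths, so the reduction cannot recover a bound linear in $k$.

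There is also a second difficulty your sketch passes over. The counting step in Theorem~\ref{C(k,k)} works because the only ``unusable'' destinations for an arc leaving a prong tip are the prongs themselves together with $a_\ell$, a set of size $k_1+k_2-1$; any arc into $A\setminus\{a_\ell\}$ closes a two-block cycle. For $b\geq 4$ the set of destinations that do \emph{not} close a copy of $C$ includes most of a trunk whose length is $\sum_{i<b} k_i$, so the budget argument must exclude a set whose size grows with $k$ and not merely with $k_b$, and it is no longer clear that out-degree $2k-1$ leaves enough slack to force an extension at each step. So the conjecture faces both a closure-pattern problem and a degree-budget problem; you have correctly diagnosed the former, and neither is resolved by your sketch, which is consistent with the statement's status as an open conjecture rather than a theorem.
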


\subsection{Three dipaths between two vertices}

A slight adaptation of the proof of Theorem~\ref{C(k,k)} leads to a stronger result.
Let $k_1,k_2,k_3$ be positive integers. Let $P(k_1,k_2;k_3)$ be the digraph formed by three internally disjoint paths between two vertices $x,y$, two $(x,y)$-dipaths, one of size at least $k_1$, the other of size at least $k_2$, and one $(y,x)$-dipath of size at least $k_3$.
When we want to insist on the vertices $x$ and $y$, we denote it by $P_{xy}(k_1,k_2;k_3)$.

\begin{theorem}\label{triplePath}
Let $k_1,k_2,k_3$ be  positive integers with $k_1 \ge k_2$.
Let $D$ be a digraph with $\delta^+(D) \geq 3k_1+2k_2+k_3-5$. Then $D$ contains  $P(k_1,k_2;k_3)$.
\end{theorem}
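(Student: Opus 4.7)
The proof adapts the extremal fork argument of Theorem~\ref{C(k,k)}. Recall that there, one takes a maximum $(\ell, k_1, k_2)$-fork $F$ with base dipath $A = (a_0, \ldots, a_\ell)$ and branching dipaths $B^1, B^2$ of lengths $k_1-1, k_2-1$ from $a_\ell$, and extracts a subdivision of $C(k_1, k_2)$ through a case analysis that either finds out-neighbours $a_i, a_j$ of the tips $b^1_{k_1-1}, b^2_{k_2-1}$ on $A \setminus \{a_\ell\}$ closing the cycle via $A[a_i, a_j]$, or extends the tips by auxiliary dipaths $C^1, C^2$ (and possibly $C^3, C^4$) and closes the cycle via out-neighbours of these extensions.

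The key observation for the stronger statement is that in every sub-case of Theorem~\ref{C(k,k)}'s proof, the constructed $C(k_1,k_2)$-subdivision has common initial vertex $x = a_\ell$ and a common terminal vertex $y$ that is either some $a_q$ on $A$ or some $c^h_m$ on an extension path, and the complement of the two $(x,y)$-dipaths within $F \cup C^1 \cup C^2 \cup \cdots$ already admits a natural $(y,x)$-dipath. When $y = a_q$ lies on $A$, the tail segment $A[a_q, a_\ell]$ of length $\ell - q$ is such a dipath; when $y = c^1_h$ lies on $C^1$ (in the sub-case where $c^1_{i_1}$ has an out-neighbour $a_j \in V(A) \setminus \{a_\ell\}$), the concatenation $(c^1_h, c^1_{h+1}, \ldots, c^1_{i_1}, a_j) \cup A[a_j, a_\ell]$ of length $i_1 - h + 1 + \ell - j$ is such a dipath, and analogous dipaths exist in the remaining sub-cases.

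Therefore my plan is to revisit the case analysis of Theorem~\ref{C(k,k)} and, in each case, declare the last $k_3 - 1$ positions $\{a_{\ell - k_3 + 1}, \ldots, a_{\ell - 1}\}$ of $A$ to be \emph{forbidden} as join destinations: any out-arc from a tip or extension vertex whose head lies there (or which would produce a natural back-path of length less than $k_3$) is treated exactly as an out-arc landing outside the current structure, and the extremal/extension step is invoked instead. The extra slack $(3k_1+2k_2+k_3-5) - (2k_1+2k_2-1) = k_1 + k_3 - 4$ in the degree bound is precisely intended to absorb the $k_3 - 1$ forbidden vertices and the additional margin needed in the two most demanding extension steps, so that the counting arguments of Theorem~\ref{C(k,k)} (contradicting maximality of $\ell$, or of $i_1$, or of $i_3$) continue to yield the desired contradiction.

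The main obstacle will be the bookkeeping in the sub-cases where $y$ does not lie on $A$, such as when $b^2_{k_2-1}$ has an out-neighbour on $C^1$: there the natural back-path passes through both an extension tail and a segment of $A$, and verifying its length exceeds $k_3$ requires combining the lower bounds on $i_1$ enforced by $\ell$-maximality with the new forbidden-zone constraint on the $A$-join index $j$. Since each such sub-case contributes only a constant adjustment to the relevant inequalities, I expect the modified analysis to close with the given degree bound after a careful but routine verification.
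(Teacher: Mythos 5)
Your proposal takes a genuinely different structural route from the paper. Where you keep the same $(\ell,k_1,k_2)$-fork as in Theorem~\ref{C(k,k)} and graft a ``forbidden zone'' of $k_3-1$ positions onto its base path $A$, the paper instead changes the fork itself: it works with an $(\ell,k_3;k_1,k_2)$-fork, namely a base dipath $P=(p_1,\ldots,p_\ell)$, followed by a dedicated stem $A=(a_1,\ldots,a_{k_3-1})$, followed by the two branches $B^1,B^2$ hanging off $a_{k_3-1}$. Because the stem sits between $P$ and the branching vertex, \emph{any} landing of a branch tip (or of the tip of an extension path) on $P$ automatically yields a back-dipath of length at least $k_3$, namely the tail of $P$ concatenated with $A$. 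No forbidden-zone bookkeeping is needed, and all the out-degree counting steps are literally the same as in Theorem~\ref{C(k,k)} except that the ``cost'' of the fork grows by the $k_3-1$ stem vertices. That is where the extra $k_3$ in the degree bound comes from, and it is transparent.

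Two more concrete points on your sketch. First, the paper's proof of Theorem~\ref{triplePath} actually \emph{drops} the sub-case ``$b^2_{k_2-1}$ has an out-neighbour on $C^1\cup C^2$'' that appears in the proof of Theorem~\ref{C(k,k)}. With the looser bound $3k_1+2k_2+k_3-5$ there is enough out-degree slack to simply pick $c^3_1,c^4_1$ outside $F\cup C^1$ and never enter that sub-case. Your plan retains that sub-case and explicitly flags it as your ``main obstacle''; indeed, in your setting, if $b^2_{k_2-1}$ lands at $c^2_h$ on $C^2$, the forward paths end at $c^2_h$ but there is no guarantee that $c^2_{i_2}$ connects back to $A$, so the back-path you describe does not exist. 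The fix is not to fight through that sub-case but to avoid it, as the paper does. Second, your forbidden-zone idea forces you to subtract $k_3-1$ from every out-degree count (arcs into the forbidden zone are wasted: they neither land usefully nor extend), and the zone is vacuous when $\ell<k_3$, so the early stages of the extremal argument behave slightly differently from the $C(k,k)$ case. Your slack estimate $(3k_1+2k_2+k_3-5)-(2k_1+2k_2-1)=k_1+k_3-4$ is in the right ballpark, but you have not run the inequalities, and the final phrase ``I expect the modified analysis to close\ldots after a careful but routine verification'' is precisely the part that is missing. In short: plausible alternative idea, but incomplete as written; the paper's stem-fork makes the whole issue disappear and is the cleaner choice.
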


\begin{proof}
Let $\ell$ be an integer.
 An {\it $(\ell,k_3;k_1,k_2)$-fork} is a digraph obtained from the union of four disjoint  directed paths $P=(p_1, \dots, p_{\ell})$, $A=(a_1,\cdots,a_{k_3-1})$, $B^1=(b^1_1,\cdots,b^1_{k_1-1})$ and $B^2=(b^2_1,\cdots,b^2_{k_2-1})$ by adding the arcs $(p_{\ell},a_1)$, $(a_{k_3-1},b^1_1)$ and $(a_{k_3-1},b^2_1)$.

Since a $(1,k_3;k_1,k_2)$-fork has $k_1+k_2+k_3-2$ vertices and $\delta^+(D) \geq k_1+k_2+k_3-2$, then $D$ contains a $(1,k_3;k_1,k_2)$-fork as a subdigraph.
So, let $\ell \geq 1$ be the largest integer such that $D$ contains an $(\ell,k_3;k_1,k_2)$-fork as a subdigraph. Let $F$ be such a fork.
For convenience, we denote its subpaths and vertices by their labels in the above definition.

If there exist $ i, j \in [\ell]$, with $i \leq j$,
such that $p_i \in N^+(b^1_{k_1-1})$ and $p_j \in N^+(b^2_{k_2-1})$ or $p_i \in N^+(b^2_{k_2-1})$ and $p_j \in N^+(b^1_{k_1-1})$, then $F$ contains a $P_{a_{k_3-1}p_j}(k_1,k_2;k_3)$.

So, let us assume that $b^1_{k_1-1}$ has no out-neighbour in $P$ (the case where $b^2_{k_2-1}$ has no out-neighbour in $P$ is similar). 
Since $|A \cup B^1\cup B^2|=k_1+k_2+k_3-3$ and $\delta^+(D) \geq k_1+k_2+k_3-1$, $b^1_{k_1-1}$ has two distinct out-neighbours, say $c^1_1$ and $c^2_1$, not in $F$.

Let $i_1 \geq 1$ be the largest integer such that there exist two disjoint directed paths $C^1$ and $C^2$ in $D-F$ with initial vertex $c^1_1$ and $c^2_1$ respectively and length $i_1$ and $i_2=\min\{k_2-1,i_1\}$.
If $i_1 \ge k_1-1$, then $i_2 \ge k_2-1$, and  thus  $P \cup A \cup B^1 \cup C_1 \cup C_2$ would contain a fork that contradicts the maximality of $\ell$.
Hence we may assume that $i_1 \le  k_1-2$ (and in particular $|V(C_1) \cup V(C_2)| \le k_1+k_2-3$).

For sake of contradiction, assume that both $c^1_{i_1}$ and $c^2_{i_2}$ have no out-neighbour in $P$.
Since  $|V(A) \cup V(B^1) \cup V(B^2)  \cup V(C^1) \cup V(C^2)|\le  2k_1+2k_2+ k_3-6< \delta^+(D)-2$,
then there exist $c^1_{i_1+1}, c^2_{i_2+1} \in V(D -(F \cup C^1\cup C^2))$ such that $(c^1_{i_1},c^1_{i_1+1}), (c^2_{i_2},c^2_{i_2+1}) \in A(D)$ and $c^1_{i_1+1}\neq  c^2_{i_2+1}$. This contradicts the maximality of $i_1$.
Henceforth, we assume that  $c^1_{i_1}$ has an out-neighbour $p_i \in P$ 
(the case  in which $c^2_{i_2}$  has an out-neighbour in $P$ is similar).

If $b^2_{k_2-1}$ has also an out-neighbour $p_j \in P$,
then $F \cup C_1$ contains a $P_{a_{k_3}p_j}(k_1,k_2;k_3)$ if $i\le j$, and a  $P_{a_{k_3}p_i}(k_1,k_2;k_3)$ if $j \le i$.

So, we may assume that $b^2_{k_2-1}$ has no out-neighbour in $P$.
Hence, $b^2_{k_2-1}$ has two distinct out-neighbours, say $c^3_1$ and $c^4_1$, not in $F \cup C^1$.
Let $i_3 \geq 1$ be the largest integer such that
there exist two disjoint dipaths $C^3$ and $C^4$ in $D-(F\cup C^1 )$ with initial vertex $c^3_1$ and $c^4_1$ respectively and length $i_3$ and $i_4=\min \{k_2-1,i_3\}$.
If $i_3 \ge k_1$, then $i_4 \ge k_2-1$ and thus $P \cup A \cup B^2 \cup C_3 \cup C_4$ contains a fork that contradicts the maximality of $F$.
Thus, we may assume that $i_3 \le k_1-2$. In particular $|V(C_3) \cup V(C_4)| \le k_1+k_2-3$.

 Suppose to the contrary that both $c^3_{i_3}$ and $c^4_{i_4}$ have no out-neighbour in $P$, where $c^3_{i_3}$ and $c^4_{i_4}$ are the last vertices of $C^3$ and $C^4$.
  Note that  $|V(A) \cup V(B^1) \cup V(B^2) \cup V(C^1) \cup V(C^3) \cup V(C^4)|\le 3k_1+2k_2+k_3-7 \le \delta^+(D)-2$.
 Hence, there exist distinct vertices $c^3_{i_3+1}, c^4_{i_4+1} \in V(D -(F \cup C^1  \cup  C^3 \cup C^4))$  such that $(c^3_{i_3},c^3_{i_3+1}), (c^4_{i_4},c^4_{i_4+1}) \in A(D)$.
This contradicts the maximality of $i_3$.

Therefore, one of $c^3_{i_3}, c^4_{i_4}$ has an out-neighbour in $p_j$ in $P$.
We assume that it is $c^3_{i_3}$; the case when it is $c^4_{i_4}$ is similar.
We conclude that $F \cup C_1 \cup C_3$ contains a  $P_{a_{k_3}p_j}(k_1,k_2;k_3)$ if $i<j$, and a $P_{a_{k_3}p_i}(k_1,k_2;k_3)$ if $j<i$.
\end{proof}

\section{Subdivisions in digraphs with large dichromatic number}\label{sec:dic}

Recall that a \emph{$k$-dicolouring} is a $k$-partition $\{V_1, \dots , V_k\}$ of $V(D)$ such that~$D\langle V_i \rangle$ is acyclic 
for every $i \in [k]$, and that the {\it dichromatic number} of $D$ is the minimum $k$ such that $D$ admits a $k$-dicolouring.
In this section, we first prove that every digraph is $\dic$-maderian.
We need some preliminaries. The first one is an easy lemma, whose proof is left to the reader.

\begin{lemma}\label{lem:dic-strong}
The dichromatic number of a digraph is the maximum of the dichromatic numbers of its strong components.
\end{lemma}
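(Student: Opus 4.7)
The plan is to show the two inequalities $\dic(D) \geq \max_i \dic(C_i)$ and $\dic(D) \leq \max_i \dic(C_i)$, where $C_1, \ldots, C_p$ are the strong components of $D$.

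For the first inequality, I would observe that each $C_i$ is an induced subdigraph of $D$. Given any $\dic(D)$-dicolouring $\{V_1, \ldots, V_{\dic(D)}\}$ of $D$, its restriction $\{V_1 \cap V(C_i), \ldots, V_{\dic(D)} \cap V(C_i)\}$ to $V(C_i)$ is still a dicolouring, since any directed cycle in $D\langle V_j \cap V(C_i)\rangle$ would also be a directed cycle in $D\langle V_j \rangle$. Hence $\dic(C_i) \leq \dic(D)$ for every $i$, and taking the maximum gives the desired bound.

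For the reverse inequality, set $k = \max_i \dic(C_i)$. For each $i \in [p]$, choose a $\dic(C_i)$-dicolouring of $C_i$ using colours from the set $[k]$ (possible since $\dic(C_i) \leq k$). Combine these into a single colouring $c \colon V(D) \to [k]$ by assigning to each vertex its colour in the dicolouring of its own strong component. I claim $c$ is a dicolouring of $D$. The key observation is that every directed cycle of $D$ is contained in a single strong component: indeed, any two vertices lying on a common directed cycle reach each other, and therefore belong to the same strong component. Thus, any monochromatic directed cycle of $c$ would be a monochromatic directed cycle in some $C_i$, contradicting the fact that $c$ restricted to $V(C_i)$ is a dicolouring. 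Hence $\dic(D) \leq k$.

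There is no real obstacle: the argument rests only on the two standard facts that induced subdigraphs inherit dicolourings and that directed cycles are confined to strong components, both of which are immediate from the definitions.
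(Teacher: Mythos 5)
Your proof is correct. The paper explicitly leaves this lemma's proof to the reader, and your two-inequality argument (restricting a dicolouring to each strong component, and conversely combining component dicolourings using the fact that every directed cycle lies inside a single strong component) is the standard argument the authors evidently had in mind.
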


Our proof is based on levelling. 
Forthwith, we  introduce the necessary definitions.
Given a digraph~$D$, the {\it distance} from a vertex $x$ to another $y$, denoted by $\dist_D(x,y)$ or simply  $\dist(x,y)$ when $D$ is clear from the context, is the minimum length of an $(x,y)$-dipath or $+\infty$ if no such dipath exists.
An {\it out-generator} in~$D$ is a vertex $u$ such that, for every $x\in V(D)$, there exists an $(u,x)$-dipath in~$D$.
Analogously, an \emph{in-generator} in~$D$ is a vertex $u$ such that, for every $x\in V(D)$, there exists an $(x,u)$-dipath in~$D$.
For simplicity, we call a vertex \emph{generator} if it is an in- or out-generator.
 Observe that every vertex in a strong digraph is an in- and out-generator.


Let $D$ be a digraph. 
Let $w,u$ be in- and out-generators of $D$, respectively.
We remark that $w$ and $u$ are not necessarily different.
For every nonnegative integer $i$,  the {\it $i$th out-level  from $u$} in $D$ is the set
$L^{u,+}_i=\{v \in V(D) \mid \dist_D(u,v)=i\}$, and the {\it $i$th in-level  from $w$} in $D$ is the set
$L^{w,-}_i=\{v \in V(D) \mid \dist_D(v,w)=i\}$.
Note that $\bigcup_{i} L^{u, +}_i =\bigcup_{i} L^{w, -}_i =V(D)$.

An {\it out-Breadth-First-Search Tree}  or 
{\it out-BFS-tree} $T^+$ with {\it root~$u$}, is a subdigraph of $D$ spanning $V(D)$ such that $T^+$ is an oriented tree and,
for every $v \in V(D)$, $dist_{T^+}(u,v) =  \dist_D(u,v)$.
Similarly, an {\it in-Breadth-First-Search Tree}  or 
{\it in-BFS-tree} $T^-$ with {\it root~$w$}, is a subdigraph of $D$ spanning $V(D)$ such that $T^-$ is an oriented tree and,
for every $v \in V(D)$, $dist_{T^-}(v,w) =  \dist_D(v,w)$.



It is well-known that if~$D$ has an out-generator, then there exists an out-BFS-tree rooted at this vertex.
Likewise, if~$D$ has an in-generator, then there exists an in-BFS-tree rooted at this generator.


Let $T$ denote an in- or out-BFS-tree rooted at~$u$.
For any vertex $x$ of $D$, there is a single $(u,x)$-dipath in $T$ if~$T$ is an out-BFS-tree, and a single $(x,u)$-dipath in $T$ if~$T$ is an in-BFS-tree.
The {\it ancestors} or \emph{successors} of $x$ in~$T$ are naturally defined.
If $y$ is an ancestor of $x$, we denote by $T[y,x]$ the $(y,x)$-dipath in $T$.
If $y$ is a successor of $x$, we denote by $T[x,y]$ the $(x,y)$-dipath in $T$.

\begin{lemma}\label{lem:dic-level}
Let $D$ be a strong digraph and let $T$ be an in- or out-BFS-tree in $D$.
There is a level $L$ such that $\dic(D\langle L \rangle) \geq \dic(D)/2$.
\end{lemma}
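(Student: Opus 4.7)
The plan is to exploit the fact that BFS-levels impose a very restrictive arc structure, and then combine dicolourings of the even-indexed levels and odd-indexed levels separately.

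First I would observe the key structural fact about BFS-trees. Suppose $T$ is an out-BFS-tree rooted at $u$, so the levels are $L_i = L_i^{u,+} = \{v : \dist_D(u,v)=i\}$. Since $D$ is strong, $u$ is an out-generator, so these levels partition $V(D)$. For any arc $xy$ of $D$ with $x\in L_i$, the triangle inequality gives $\dist_D(u,y)\leq i+1$, so $y\in L_j$ with $j\leq i+1$. (The case where $T$ is an in-BFS-tree is symmetric: arcs go from $L_i$ to $L_j$ with $j\geq i-1$.)

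Next I would split the levels into two sets by parity. Let $D_{\text{even}} = D\langle \bigcup_i L_{2i}\rangle$ and $D_{\text{odd}} = D\langle \bigcup_i L_{2i+1}\rangle$. Any arc of $D_{\text{even}}$ from $L_{2i}$ to $L_{2j}$ must satisfy $2j\leq 2i+1$, hence $j\leq i$; therefore arcs in $D_{\text{even}}$ are either within a single even level or go strictly downwards between even levels. Consequently, every directed cycle of $D_{\text{even}}$ is contained in a single level $L_{2i}$, and thus
\[
\dic(D_{\text{even}}) \;=\; \max_i \dic\bigl(D\langle L_{2i}\rangle\bigr).
\]
The same argument gives $\dic(D_{\text{odd}}) = \max_i \dic(D\langle L_{2i+1}\rangle)$.

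Finally I would show $\dic(D)\leq \dic(D_{\text{even}})+\dic(D_{\text{odd}})$. Take optimal dicolourings of $D_{\text{even}}$ and $D_{\text{odd}}$ using disjoint sets of colours. Each resulting colour class of $D$ is entirely contained in $D_{\text{even}}$ or $D_{\text{odd}}$, so it induces an acyclic subdigraph there, and hence in $D$ as well. This yields the claimed bound, and therefore
\[
\max\bigl\{\dic(D_{\text{even}}),\,\dic(D_{\text{odd}})\bigr\}\;\geq\;\dic(D)/2.
\]
Picking a level $L$ attaining this maximum gives the desired conclusion. The only step requiring care is the arc-structure lemma for BFS-trees; everything else is a direct calculation. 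No real obstacle is expected beyond handling the two BFS-tree orientations uniformly, which is done by symmetry.
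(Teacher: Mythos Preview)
Your proof is correct and follows essentially the same approach as the paper: split the levels by parity, use the BFS-level arc structure to show that directed cycles in each parity class lie within a single level (the paper phrases this via strong components and Lemma~\ref{lem:dic-strong}), and conclude by combining the two dicolourings. The only cosmetic difference is that you argue the inequality $\dic(D)\leq \dic(D_{\text{even}})+\dic(D_{\text{odd}})$ explicitly via disjoint colour sets, whereas the paper states it directly.
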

\begin{proof}
First, let us suppose, without loss of generality, that~$T$ is an out-BFS-tree in~$D$. 
The proof when~$T$ is an in-BFS-tree is analogous.

Let $D_1$ and $D_2$ be the subdigraphs of $D$ induced by the vertices of odd and even levels, respectively.
Since there is no arc from $L_i$ to $L_j$ for every $j\geq i+2$, the strong components of $D_1$ and $D_2$ are contained in the levels.
Hence, by Lemma~\ref{lem:dic-strong}, $\dic(D_1) = \max \{\dic(D\langle L_i \rangle) \mid i \text{ is odd} \}$ and $\dic(D_2) = \max \{\dic(D\langle L_i \rangle) \mid i \text{ is even}\}$.
Moreover, note that $V(D_1)\cup V(D_2) =V(D)$ because $D$ is strong. 
Therefore, $\dic(D) \leq \dic(D_1)+\dic(D_2) \leq 2\cdot \max  \{\dic(D\langle L_i \rangle) \mid i\in \mathbb{N}\}$.
\end{proof}

\begin{lemma}\label{lem:recur-dic}
Let $F$ be a digraph  and let $a=xy$ be an arc in $A(F)$.
If $F - a$ is $\dic$-maderian, then $F$ is $\dic$-maderian, and $\mad_{\dic}(F) \le 4\cdot \mad_{\dic}(F - a)-3$
\end{lemma}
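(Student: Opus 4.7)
The plan is to extend the BFS-levelling idea of Lemma~\ref{lem:dic-level} to two dimensions simultaneously, using an out-BFS-tree and an in-BFS-tree rooted at the same vertex. Let $N := \mad_{\dic}(F-a)$, and let $D$ be any digraph with $\dic(D) \geq 4N - 3$. By Lemma~\ref{lem:dic-strong}, we may assume $D$ is strong, replacing it if necessary by a strong component of maximum dichromatic number. Fix any $u \in V(D)$; since $D$ is strong, $u$ is both an in- and an out-generator, so we build an out-BFS-tree $T^+$ and an in-BFS-tree $T^-$ of $D$, both rooted at~$u$. For $i, j \geq 0$, set $L_{i, j} := L^{u, +}_i \cap L^{u, -}_j$.

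Next, partition $V(D)$ into four classes according to the parities of the out- and in-level. I claim that within each class, every strong component lies in a single $L_{i, j}$. Indeed, along a directed cycle contained in one parity class, the BFS-property forces each arc to change the out-level by at most $+1$ and the in-level by at most $-1$; but within a fixed parity class such $\pm 1$ changes are ruled out, and summing differences around the cycle must give~$0$, so all out- and in-levels are constant on the cycle. Applying Lemma~\ref{lem:dic-strong} to each of the four classes and summing therefore gives
\[
\dic(D) \leq 4 \max_{i, j \geq 0} \dic(D\langle L_{i, j}\rangle).
\]
Hence some $L_{i, j}$ satisfies $\dic(D\langle L_{i, j}\rangle) \geq N$. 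The only non-empty level-intersection with $i = 0$ or $j = 0$ is $L_{0, 0} = \{u\}$, of dichromatic number~$1$, so for $N \geq 2$ we may take $i, j \geq 1$ (the case $N = 1$ is degenerate). A strong component $D' \subseteq L_{i, j}$ with $\dic(D') \geq N$ then contains, by hypothesis, a subdivision $S$ of $F - a$; let $x', y'$ be the branching vertices of $S$ corresponding to $x, y \in V(F)$.

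It remains to produce an $(x', y')$-dipath internally disjoint from~$S$; adjoined to~$S$ this will give the desired subdivision of~$F$. Consider the walk $W := T^-[x', u] \cdot T^+[u, y']$ in~$D$. The internal vertices of $T^-[x', u]$ have in-level strictly less than~$j$, hence lie outside $L_{i, j}$ and so outside $V(S) \subseteq V(D') \subseteq L_{i, j}$; symmetrically, the internal vertices of $T^+[u, y']$ have out-level below~$i$ and are also outside $V(S)$; and $u$ itself is at $(0, 0) \neq (i, j)$, hence outside~$V(S)$. Therefore $V(W) \cap V(S) = \{x', y'\}$, and extracting an $(x', y')$-dipath $P \subseteq W$ (by shortcutting any repeated vertices) gives a dipath internally disjoint from~$S$, so that $S \cup P$ is a subdivision of~$F$ in~$D$. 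The main obstacle I anticipate is the two-dimensional refinement giving the factor of~$4$: verifying via the parity argument that strong cycles within each class must preserve \emph{both} BFS-levels simultaneously. Once that inequality is in place, the closing dipath through~$u$ is essentially forced by the structure of the two BFS-trees.
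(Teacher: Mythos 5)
Your proof is correct, and it takes a genuinely different route from the paper's. The paper applies Lemma~\ref{lem:dic-level} twice as a black box: first with an out-BFS-tree on all of~$D$ to find a level~$L^u$ with $\dic(D\langle L^u\rangle)\geq 2c-1$, then it extracts a strong component~$C$ of~$D\langle L^u\rangle$, picks a shortest $(C,u)$-dipath~$P$ (using strongness of~$D$), roots an in-BFS-tree of~$C$ at the endpoint $v$ of~$P$ on~$C$, and applies Lemma~\ref{lem:dic-level} again inside~$C$; the closing arc is then $T_v[x,v]\cup P\cup T_u[u,y]$. You instead prove a genuine two-dimensional refinement of Lemma~\ref{lem:dic-level}: rooting both an out- and an in-BFS-tree at the \emph{same} vertex~$u$, partitioning by the two parities, and checking via the telescoping argument (each arc changes $\dist(u,\cdot)$ by at most $+1$ and $\dist(\cdot,u)$ by at least $-1$, and within a parity class these even differences must vanish around a closed walk) that strong components within a class fall into a single~$L_{i,j}$. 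This gives $\dic(D)\leq 4\max_{i,j}\dic(D\langle L_{i,j}\rangle)$ in one stroke, and the closing walk $T^-[x',u]\cdot T^+[u,y']$ is internally disjoint from $V(S)\subseteq L_{i,j}$ automatically because internal vertices lie at out-level $<i$ or in-level $<j$ and $u$ sits at $(0,0)\neq(i,j)$. What your route buys is symmetry and economy: you avoid the intermediate extraction of a strong component and the auxiliary path~$P$, and your 2D levelling inequality is a reusable statement in its own right. What the paper's route buys is modularity: it only ever uses the 1D lemma, which it has already proved, at the cost of the extra bookkeeping. One small housekeeping remark: you should say a word to rule out $N=1$ (it cannot occur when $a$ has distinct endpoints, since then $F-a$ has at least two vertices and $K_1$ witnesses $\mad_{\dic}(F-a)\geq 2$), rather than merely calling it degenerate.
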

%
\begin{proof}
Let $c=\mad_{\dic}(F-a)$ and let~$D$ be a digraph with $\dic(D)\geq 4c-3$.
We shall prove that $D$ contains a subdivision of $F$.

By Lemma~\ref{lem:dic-strong}, we may assume that $D$ is strong.
Let $u$ be a vertex in $D$ and $T_u$ an out-BFS-tree with root $u$.
By Lemma~\ref{lem:dic-level}, there is a level $L^u$ such that  $\dic(D\langle L^u\rangle) \geq 2c-1$. 
By Lemma~\ref{lem:dic-strong}, there is a strong component $C$ of $D\langle L^u\rangle$ such that
$\dic(C) = \dic(D\langle L^u\rangle) \geq 2c-1$.
Since $D$ is strong, there is a shortest $(v, u)$-dipath~$P$ in~$D$ such that~$V(P)\cap V(C) = \{v\}$. 
Let $T_v$ be an  in-BFS-tree in $C$ rooted at~$v$.
By Lemma~\ref{lem:dic-level}, there is a level $L^v$ of $T_v$ such that  $\dic(D\langle L^v\rangle) \geq c$.
Now since $\mad_{\dic}(F - a)=c$, $D\langle L^v\rangle$ contains a subdivision $S$ of $F- a$. 
With a slight abuse of notation, let us call $x$ and $y$ the vertices in~$S$ corresponding to the vertices $x$ and $y$ of $F$.
Now $T_v[x,v]\cup P \cup T_u[u,y]$ is a directed $(x,y)$-walk with no internal vertex in $L^v$.
Hence it contains an $(x,y)$-dipath $Q$ whose internal vertices are not in $S$.
Therefore, $S\cup Q$ is a subdivision of $F$ in~$D$.
\end{proof}

\begin{theorem}\label{thm:dic-maderian}
Every digraph $F$ is $\dic$-maderian.
More precisely, $\mad_{\dic}(F)\leq 4^m (n-1) + 1$, where $m=|A(F)|$ and $n=|V(F)|$.
\end{theorem}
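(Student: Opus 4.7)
The plan is to proceed by induction on $m = |A(F)|$, using Lemma~\ref{lem:recur-dic} as the inductive engine.

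\textbf{Base case} ($m=0$). Here $F$ is the edgeless digraph on $n$ vertices, and a subdivision of $F$ is just any $n$-tuple of vertices. The claim reduces to $\mad_{\dic}(F) \le n$, which holds because every digraph $D$ satisfies $\dic(D) \le |V(D)|$; so $\dic(D) \ge n$ forces $|V(D)| \ge n$, and any $n$ vertices of $D$ form a subdivision of $F$. (If $n=1$ the bound $4^m(n-1)+1 = 1$ is obvious.)

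\textbf{Inductive step}. Assume the statement for all digraphs with strictly fewer than $m$ arcs. Let $F$ have $m \ge 1$ arcs, $n$ vertices, and pick any arc $a \in A(F)$. The digraph $F - a$ has $n$ vertices and $m-1$ arcs, so by the induction hypothesis it is $\dic$-maderian with
\[
\mad_{\dic}(F-a) \;\le\; 4^{m-1}(n-1)+1.
\]
Applying Lemma~\ref{lem:recur-dic} to $F$ and the arc $a$, we conclude that $F$ is $\dic$-maderian with
\[
\mad_{\dic}(F) \;\le\; 4\cdot\mad_{\dic}(F-a) - 3 \;\le\; 4\bigl(4^{m-1}(n-1)+1\bigr) - 3 \;=\; 4^m(n-1)+1,
\]
which closes the induction.

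\textbf{Where the work really sits.} There is no obstacle in this argument itself: once Lemma~\ref{lem:recur-dic} is in hand, the theorem is a one-line induction, and the exponent base $4$ in the bound is exactly the factor $4$ coming from that lemma (the constant shift $-3$ absorbs the additive $+1$ in each step, so the recursion is clean). The only mild care needed is in the base case, to handle the edgeless $F$ directly rather than trying to invoke Lemma~\ref{lem:recur-dic} when no arc is available. The genuine content of the theorem is entirely concentrated in the BFS-levelling/strong-component argument behind Lemma~\ref{lem:recur-dic}; the present theorem is merely its iterated form.
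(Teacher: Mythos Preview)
Your proof is correct and follows exactly the same approach as the paper: induction on $m$ with the empty digraph as base case and Lemma~\ref{lem:recur-dic} driving the inductive step. The arithmetic $4(4^{m-1}(n-1)+1)-3 = 4^m(n-1)+1$ is the same, and your added justification of the base case (via $\dic(D)\le |V(D)|$) is a welcome bit of explicitness over the paper's ``trivially''.
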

\begin{proof}
We prove the result by induction on $m$.
If $m=0$, then $F$ is an empty digraph which is trivially $\dic$-maderian and $\mad_{\dic}(F)=n$.
If $m>0$, then consider an arc~$a \in A(F)$.
By Lemma~\ref{lem:recur-dic}, we obtain $\mad_{\dic}(F) \leq 4 \cdot  \mad_{\dic}(F - a) -3$.
By the induction hypothesis, $\mad_{\dic}(F - a)\leq 4^{m-1} (n-1) + 1$.
Therefore, $\mad_{\dic}(F)\leq 4^m (n-1) + 1$.
\end{proof}

Observe that Theorem~\ref{thm:dic-maderian} generalizes the consequence of Theorem~\ref{thm:mader-undirected}, stating that every graph with sufficiently large chromatic number contains a subdivision of $K_k$. In fact this statement corresponds to the case of symmetric digraphs of Theorem~\ref{thm:dic-maderian}.

\subsection{Better bounds on $\mad_{\dic}$}\label{better-dic}

The bound on $\mad_{\dic}$ given in Theorem~\ref{thm:dic-maderian} is not optimal.
The aim of this subsection is to find better upper bounds.

A digraph is {\it $k$-$\dic$-critical} if $\dic(D) =k$ and $\dic (D')<k$ for every proper subdigraph $D'$ of $D$.

\begin{proposition}
If $D$ is $k$-$\dic$-critical, then $\delta^0(D)\geq k-1$.
\end{proposition}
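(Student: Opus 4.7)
The plan is a direct analogue of the classical fact that a $k$-chromatic-critical graph has minimum degree at least $k-1$, with acyclicity of dicolour classes playing the role of properness. I will argue by contradiction, handling the out-degree and in-degree cases symmetrically.

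Suppose first that some vertex $v \in V(D)$ satisfies $d^+(v) \leq k-2$. By the criticality of $D$, the proper subdigraph $D-v$ satisfies $\dic(D-v) \leq k-1$, so it admits a dicolouring with colour classes $V_1,\ldots,V_{k-1}$, each $D\langle V_i\rangle$ acyclic. Since $v$ has at most $k-2$ out-neighbours in $D$, by pigeonhole at least one class, say $V_j$, contains no out-neighbour of $v$. I claim that $D\langle V_j \cup \{v\}\rangle$ is acyclic: any directed cycle in this subdigraph must pass through $v$ (as $D\langle V_j\rangle$ is acyclic), hence must use some arc $vw$ with $w \in V_j$, contradicting the choice of $V_j$. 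Reassigning $v$ to $V_j$ thus yields a $(k-1)$-dicolouring of $D$, contradicting $\dic(D)=k$.

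The case $d^-(v) \leq k-2$ is entirely symmetric: use criticality to obtain a $(k-1)$-dicolouring of $D-v$, pick a class $V_j$ containing no in-neighbour of $v$, and observe that any directed cycle in $D\langle V_j \cup \{v\}\rangle$ would have to enter $v$ via an arc $uv$ with $u \in V_j$, again a contradiction. (Alternatively, one may apply the out-degree case to the converse digraph $\overline{D}$, since $\dic(\overline{D})=\dic(D)$ and $\overline{D}$ is also $k$-$\dic$-critical.)

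Since both cases lead to contradictions, every vertex satisfies $d^+(v) \geq k-1$ and $d^-(v) \geq k-1$, hence $\delta^0(D) \geq k-1$. No serious obstacle is expected here; the only point worth emphasising is the verification that adding a vertex with no out-neighbour (resp.\ no in-neighbour) in a colour class preserves acyclicity, which is immediate because any new cycle would have to traverse the newly added vertex through an arc of the forbidden direction.
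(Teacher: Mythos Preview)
Your proof is correct and follows essentially the same approach as the paper: both use criticality to obtain a $(k-1)$-dicolouring of $D-v$ and then observe that adding $v$ to a colour class creates a directed cycle through $v$ precisely when $v$ has both an in- and an out-neighbour in that class. The paper phrases this directly (each class forces an in- and out-neighbour, giving $k-1$ of each), whereas you phrase it by contradiction via pigeonhole, but the underlying argument is identical.
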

\begin{proof}
Let $v$ be a vertex of $D$.
Since $D$ is $k$-$\dic$-critical, $\dic(D-v) \leq  k-1$, so $D-v$ admits a $(k-1)$-dicolouring $\{V_1, \dots , V_{k-1}\}$.
Thus, for each $ i \in [k-1]$, $D\langle V_i \cup \{v\}\rangle$  has a directed cycle that contains~$v$.
Therefore, $v$ has an in-neighbour and an out-neighbour in each $V_i$.
\end{proof}

\begin{corollary}
$\mad_{\dic}(F) \leq \mad_{\delta^0}(F) +1$ for all digraph $F$.
\end{corollary}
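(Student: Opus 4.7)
The plan is to derive the corollary as an almost immediate consequence of the preceding proposition via a standard criticality argument. Given any digraph $F$, assume $F$ is $\delta^0$-maderian (otherwise the bound is vacuous) and let $D$ be an arbitrary digraph with $\dic(D) \geq \mad_{\delta^0}(F) + 1$; the goal is to produce a subdivision of $F$ in $D$.

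First, I would pass to a $\dic$-critical subdigraph. Namely, among all subdigraphs $D'$ of $D$ satisfying $\dic(D') = \dic(D)$, choose one that is minimal (with respect to the subdigraph order). By construction, $D'$ is $k$-$\dic$-critical where $k = \dic(D') = \dic(D) \geq \mad_{\delta^0}(F) + 1$. This step is the only place one has to be slightly careful, since the notion ``$\dic$-critical'' in the paper refers to proper subdigraphs, but this matches exactly what the minimality of $D'$ guarantees.

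Next, I would apply the preceding proposition to $D'$, which yields
\[
\delta^0(D') \;\geq\; \dic(D') - 1 \;\geq\; \mad_{\delta^0}(F).
\]
By the very definition of $\mad_{\delta^0}(F)$, the digraph $D'$ must contain a subdivision of $F$; since $D' \subseteq D$, so does $D$. As $D$ was an arbitrary digraph with $\dic(D) \geq \mad_{\delta^0}(F) + 1$, this establishes $\mad_{\dic}(F) \leq \mad_{\delta^0}(F) + 1$.

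There is essentially no real obstacle here: the content is already packed into the previous proposition, and the corollary is a one-line reduction via $\dic$-criticality. The only thing worth spelling out in a polished write-up is the existence of the critical subdigraph, and the observation that the minimum semidegree (rather than just the minimum out-degree) of $D'$ is controlled, which is exactly what allows the bound against $\mad_{\delta^0}$ rather than $\mad_{\delta^+}$.
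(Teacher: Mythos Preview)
Your argument is correct and is exactly the intended one: the paper states the corollary immediately after the proposition on $\dic$-critical digraphs without proof, and your criticality-plus-proposition reduction is precisely the one-line derivation that makes it a corollary. There is nothing to add or fix.
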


\begin{corollary}\label{cor:dic-forest}
$\mad_{\dic}(F) =|V(F)|$ for all oriented forest $F$.
\end{corollary}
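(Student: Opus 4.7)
The plan is to establish matching upper and lower bounds, both of which follow almost immediately from material developed earlier in the paper. Set $n=|V(F)|$.

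For the upper bound $\mad_{\dic}(F)\leq n$, I would invoke the preceding corollary, which yields $\mad_{\dic}(F)\leq \mad_{\delta^0}(F)+1$. It then suffices to verify that $\mad_{\delta^0}(F)\leq n-1$ for every oriented forest on $n$ vertices, which is an easy extension to forests of the greedy embedding argument cited in the introduction for oriented trees. Concretely, order the vertices of $F$ as $v_1,\ldots,v_n$ so that the components of $F$ appear in consecutive blocks and, within each block, every non-root vertex has exactly one already-listed neighbour (a BFS or DFS order inside each tree). Given $D$ with $\delta^0(D)\geq n-1$ (so in particular $|V(D)|\geq n$), embed the $v_i$ one at a time: map each root of a tree to an arbitrary still-unused vertex of $D$, and map each non-root $v_i$ to an appropriately oriented in- or out-neighbour of its parent's image, avoiding the at most $n-1$ already-used vertices. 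Since $\delta^0(D)\geq n-1$, this step always succeeds. As $F$ is (trivially) a subdivision of itself, $D$ contains a subdivision of $F$.

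For the lower bound $\mad_{\dic}(F)\geq n$, I would take $\vec{K}_{n-1}$ as a witness. On one hand, $\dic(\vec{K}_{n-1})=n-1$: every pair of vertices induces a directed $2$-cycle, so each acyclic colour class is a singleton. On the other hand, any subdivision of $F$ has at least $|V(F)|=n$ vertices, while $\vec{K}_{n-1}$ has only $n-1$; hence $\vec{K}_{n-1}$ contains no subdivision of $F$. Combining, $\mad_{\dic}(F)>n-1$, as desired.

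There is no genuine obstacle here: the proof is a straightforward packaging of the previous corollary, the tree-embedding greedy procedure mentioned in the introduction, and the elementary properties of $\vec{K}_{n-1}$. The only point worth flagging is that the greedy argument must be stated for forests rather than trees, but handling the roots of the additional components costs nothing since $\delta^0(D)\geq n-1$ already forces $|V(D)|\geq n$.
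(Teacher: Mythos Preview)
Your proposal is correct and follows exactly the approach the paper intends: the corollary is stated without proof in the paper, but the implicit argument is precisely the one you give --- combine the preceding corollary $\mad_{\dic}(F)\le \mad_{\delta^0}(F)+1$ with the greedy embedding (mentioned in the introduction) giving $\mad_{\delta^0}(F)\le |V(F)|-1$, and use $\vec{K}_{n-1}$ for the matching lower bound. Your extension of the greedy argument from trees to forests is the only detail to fill in, and you handle it correctly.
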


Let us denote by $\cc(F)$ the number of connected components of $F$, that are the connected components of the underlying graph.

\begin{corollary}\label{cor:dic-Kn}
For every digraph $F$, we have
$\mad_{\dic}(F) \leq 4^{m-n+\cc(F)}(n-1)+1$, where $m=|A(F)|$ and $n=|V(F)|$.
\end{corollary}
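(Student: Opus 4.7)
The plan is to interpolate between Corollary~\ref{cor:dic-forest} (the forest case) and Theorem~\ref{thm:dic-maderian} (the general case) by building $F$ from a spanning forest one arc at a time, invoking Lemma~\ref{lem:recur-dic} at each step. The key observation is that $m - n + \cc(F)$ is exactly the cyclomatic number of the underlying multigraph of $F$, i.e.\ the minimum number of arcs one must delete from $F$ to obtain an oriented forest.

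More concretely, I would first choose a spanning oriented forest $F_0 \subseteq F$. Since the underlying graph of $F$ has $c := \cc(F)$ connected components, $F_0$ has exactly $n - c$ arcs. By Corollary~\ref{cor:dic-forest}, $\mad_{\dic}(F_0) \leq n$. I would then enumerate the remaining $k := m - (n - c) = m - n + c$ arcs of $A(F) \setminus A(F_0)$ as $a_1, \ldots, a_k$ and define $F_i := F_0 \cup \{a_1, \ldots, a_i\}$, so that $F_k = F$.

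Applying Lemma~\ref{lem:recur-dic} to each step yields the recurrence $\mad_{\dic}(F_i) \leq 4\,\mad_{\dic}(F_{i-1}) - 3$. Unwinding this recurrence starting from $\mad_{\dic}(F_0) \leq n$ gives $\mad_{\dic}(F) \leq 4^k(n-1) + 1$ via a short geometric-series computation, which is precisely the desired bound.

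No step here is genuinely difficult: everything follows mechanically from the tools already established. The only mild subtlety is that Lemma~\ref{lem:recur-dic} formally requires $F_{i-1}$ to be $\dic$-maderian before it can be applied to produce the bound for $F_i$, but this is guaranteed inductively since each $F_{i-1}$ has already been shown to be $\dic$-maderian in the previous step (with base case provided by Corollary~\ref{cor:dic-forest}).
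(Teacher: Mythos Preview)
Your proposal is correct and takes essentially the same approach as the paper: start the induction at a spanning forest (where Corollary~\ref{cor:dic-forest} gives $\mad_{\dic}=n$) rather than the empty digraph, and apply Lemma~\ref{lem:recur-dic} for each of the $m-n+\cc(F)$ remaining arcs. The paper's proof is a one-line sketch saying exactly this; your write-up simply fills in the details and unwinds the recurrence explicitly.
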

\begin{proof}
The proof is identical to the one of Theorem~\ref{thm:dic-maderian}, but instead of starting the induction with empty digraphs, we start it with a forest that is the union of spanning trees of the connected components. 
\end{proof}

Corollary~\ref{cor:dic-Kn} implies that $\mad_{\dic}(\vec{K}_n) \leq 4^{n(n-2)+1}(n-1)+1$.
On the other hand, we have  $\mad_{\dic}(\vec{K}_n) \geq \Omega(\frac{ n^2}{\log n})$. 
Indeed,  consider a tournament $T$ on $p$ vertices with a subdivision $S$ of $\vec{K}_n$. 
For every two distinct vertices $u,v$ of $\vec{K}_n$, at least one of the arcs $(u,v)$, $(v,u)$ is subdivided in $S$. 
Hence, $S$ has at least $n + \binom{n}{2}=\binom{n+1}{2}$ vertices, so $p\geq \binom{n+1}{2}$.  
Erd\H{os} and Moser~\cite{erdos64} proved that for every integer $p$, 
there exists a tournament $T_p$ on $p$ vertices  with no transitive tournament of order $2\log p +1$. 
Thus $\dic(T_p) \geq \frac{p}{2\log p}$.
Now set $p = \binom{n+1}{2}-1$. The tournament $T_p$ contains no subdivision of $K_n$ and $\dic(T_p) \geq \frac{p}{2\log p}$.
Hence $\mad_{\dic}(\vec{K}_n) \geq  \frac{p}{2\log p} + 1 \geq \Omega(\frac{ n^2}{\log n})$.

\medskip

A {\it $k$-source} in a digraph is a vertex $x$ with in-degree $0$ and out-degree at most $k$; a {\it $k$-sink} in a digraph is a vertex $x$ with out-degree $0$ and in-degree at most $k$.
A digraph is {\it $k$-reducible} if it can be reduced to the empty digraph  by repeated deletion of $k$-sources or
$k$-sinks. For instance, the $1$-reducible digraphs are the oriented forests.

\begin{lemma}
Let $F$ be a digraph having a $2$-source $x$.
Then $\mad_{\dic}(F) \leq 2\mad_{\dic}(F-x) -1$.
\end{lemma}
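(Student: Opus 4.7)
The plan is to adapt the argument of Lemma~\ref{lem:recur-dic}, but with only one application of the levelling lemma instead of two. The saving will come from the fact that $x$ has in-degree $0$ in $F$, so in a subdivision of $F$ one does not need to reach $x$, only to reach its at most two out-neighbours from a single new vertex.

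Setting $c = \mad_{\dic}(F-x)$ and letting $y_1,\ldots,y_p$ (with $0\leq p\leq 2$) be the out-neighbours of $x$ in $F$, I would take a digraph $D$ with $\dic(D)\geq 2c-1$ and aim to find a subdivision of $F$ in it. First I would invoke Lemma~\ref{lem:dic-strong} to replace $D$ by a strong component of maximum dichromatic number, so that $D$ is strong. I would then fix any vertex $u\in V(D)$, build an out-BFS-tree $T_u$ of $D$ rooted at $u$, and apply Lemma~\ref{lem:dic-level} to $T_u$ to obtain a level $L$ with $\dic(D\langle L\rangle)\geq \lceil (2c-1)/2\rceil = c$. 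By definition of $c$, the subdigraph $D\langle L\rangle$ then contains a subdivision $S$ of $F-x$; I would identify each $y_i$ with its image in $S$, so that $y_1,\ldots,y_p\in V(S)\subseteq L$ all lie at the same distance $\ell$ from $u$.

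In the main case $p=2$, I would let $w$ be the last common ancestor of $y_1$ and $y_2$ in the out-arborescence $T_u$. Since $y_1\neq y_2$, the vertex $w$ is at distance strictly smaller than $\ell$ from $u$, so $w\notin L$ and thus $w\notin V(S)$; likewise the internal vertices of each $T_u[w,y_i]$ lie at levels strictly between $\dist_D(u,w)$ and $\ell$, so they also avoid $L$ and hence $V(S)$. By the choice of $w$, the dipaths $T_u[w,y_1]$ and $T_u[w,y_2]$ meet only at $w$. Combining $S$ with $w$ in the role of $x$ and these two internally disjoint dipaths produces the required subdivision of $F$. The sub-cases $p\leq 1$ are handled in the same way, using $u$ itself as $x$ and $T_u[u,y_1]$ if $p=1$; here $u\notin L$ because $\ell\geq 1$ whenever $c\geq 2$ (the $0$-th level $\{u\}$ has dichromatic number $1<c$), while the degenerate case $c=1$ is trivial.

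The main obstacle will be verifying that the new vertex playing the role of $x$, together with the dipaths from it to the $y_i$, is internally disjoint from the already-built subdivision $S$ and pairwise internally disjoint. Both requirements follow from two structural facts used in tandem: BFS levelling places all strict ancestors of $L$-vertices at lower levels and hence outside $V(S)\subseteq L$, and the last-common-ancestor construction in a tree automatically yields internally disjoint dipaths to two targets. That is why a single out-BFS tree suffices here, in contrast to Lemma~\ref{lem:recur-dic}, where the added arc's two endpoints already lie in the subdivision and both an out-BFS and an in-BFS tree are needed, accounting for the factor $4$ there versus the factor $2$ in the present bound.
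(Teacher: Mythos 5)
Your proof is correct and follows essentially the same route as the paper's: reduce to a strong digraph, take a single out-BFS-tree, apply the levelling lemma once to find a level $L$ with $\dic(D\langle L\rangle)\geq c$, embed a subdivision $S$ of $F-x$ there, and then attach the least common ancestor of $y_1,y_2$ in the tree together with the two tree paths down to them. Your write-up is somewhat more explicit than the paper's (which assumes $p=2$ and leaves the disjointness checks implicit), but the underlying argument is the same.
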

\begin{proof}
Suppose that $F-x$ is $(\dic \geq c)$-maderian.
We shall prove that $F$ is $(\dic \geq 2c-1)$-maderian.

Let $D$ be a digraph with $\dic(D)\geq 2c-1$.
By Lemma~\ref{lem:dic-strong}, we may assume that $D$ is  strong.
Let $u$ be a vertex in $D$, and let $T$ be a BFS-tree with root $u$.
By Lemma~\ref{lem:dic-level}, there is a level such that $\dic (D\langle L\rangle) \geq c$.
Consequently, $D\langle L \rangle$ contains a subdivision $S$ of $F-x$.
Let $y_1$ and $y_2$ be the vertices in~$S$ corresponding to the two out-neighours of $x$ in $F$.
Let $v$ be the least common ancestor of $y_1$ and $y_2$ and, for~$i \in \{1,2\}$, let $P_i$ be the $(v,y_i)$-dipath in $T$.
Therefore, we conclude that the digraph $S\cup P_1 \cup P_2$ is a subdivision of $F$ in~$D$.
\end{proof}

\begin{corollary}
The following statements hold.
\begin{enumerate}[(a)]
\item\label{stat:reducible} $\mad_{\dic} (F) \leq 2^{|V(F)|-2}+1$ for every $2$-reducible digraph $F$ or order at least $2$.
\item\label{stat:oriented_cycle} $\mad_{\dic} (C) \leq 2\cdot |V(C)| -3$ for every oriented cycle $C$ of order at least $3$.
\end{enumerate}
\end{corollary}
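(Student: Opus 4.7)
The plan is to prove both parts by induction using the preceding lemma together with its symmetric counterpart for 2-sinks. The symmetric version is immediate: the dichromatic number is invariant under arc reversal, so applying the lemma to the converse of $F$ (in which 2-sinks become 2-sources) yields $\mad_{\dic}(F)\leq 2\mad_{\dic}(F-y)-1$ whenever $y$ is a 2-sink of $F$.

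For part~(a), I would induct on $n=|V(F)|$. In the base case $n=2$, a 2-reducible digraph on two vertices cannot be a digon, for then both vertices would have in- and out-degree $1$ and no source or sink would exist; hence $F$ is an oriented forest on two vertices, and Corollary~\ref{cor:dic-forest} gives $\mad_{\dic}(F)=2=2^{0}+1$. For the inductive step $n\geq 3$, pick a 2-source or 2-sink $x$ of $F$; then $F-x$ remains 2-reducible and has $n-1\geq 2$ vertices, so by the induction hypothesis $\mad_{\dic}(F-x)\leq 2^{n-3}+1$. An application of the lemma (or its dual) then gives $\mad_{\dic}(F)\leq 2(2^{n-3}+1)-1=2^{n-2}+1$.

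For part~(b), let $n=|V(C)|\geq 3$, and split on whether $C$ is a directed cycle. If $C=\vec{C}_{n}$, then a subdivision of $C$ is exactly a dicycle of length at least $n$. For any digraph $D$ with $\dic(D)\geq n$, pass to an $n$-$\dic$-critical subdigraph $D'$; the proposition preceding Corollary~\ref{cor:dic-forest} ensures $\delta^{0}(D')\geq n-1$, and the standard longest-dipath argument then produces a dicycle of length at least $n$ in $D'$. This shows $\mad_{\dic}(\vec{C}_{n})\leq n\leq 2n-3$. Otherwise, $C$ has more than one block and hence contains a vertex $x$ where two blocks meet; such a vertex has degree $2$ and is either a source or a sink of $C$, i.e.\ a 2-source or 2-sink. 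Removing $x$ leaves an oriented path on $n-1$ vertices, so Corollary~\ref{cor:dic-forest} yields $\mad_{\dic}(C-x)=n-1$, and a single application of the lemma (or its dual) gives $\mad_{\dic}(C)\leq 2(n-1)-1=2n-3$.

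The only genuine subtlety is the symmetric handling of 2-sources versus 2-sinks, cleanly dispatched by the arc-reversal argument at the outset; everything else is routine bookkeeping of the induction with the bounds supplied by Corollary~\ref{cor:dic-forest} and the preceding lemma.
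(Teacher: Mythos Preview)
Your proof is correct and follows essentially the same approach as the paper's: induction via the 2-source lemma for part~(a), and the case split on directed versus non-directed cycles for part~(b). You make explicit two points the paper glosses over---the dual lemma for 2-sinks (needed because 2-reducibility allows removing either kind) and the details of the base case and the directed-cycle bound---but the underlying argument is the same.
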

\begin{proof}
Statement~\eqref{stat:reducible} follows by induction on~$|V(F)|$. 
Observe that the result trivially holds when $|V(F)|=2$.

To prove statement~\eqref{stat:oriented_cycle}, consider the following two complementary cases.
If  $C$ is directed, say $C=\vec{C}_k$, then $\mad_{\dic}(C)\leq \mad_{\delta^0}(C)+1\leq \mad_{\delta^+}(C)\leq k\leq 2k-3$. 
If $C$ is not directed, then it contains a $2$-source $x$. 
Hence $C-x$ is an oriented path, and, by Corollary~\ref{cor:dic-forest}, 
it follows that $\mad_{\dic}(C-x)=|C-x| = |C|-1$.
\end{proof}

\begin{conjecture}
 $\mad_{\dic} (C) \leq |C|$ for every oriented cycle $C$.
\end{conjecture}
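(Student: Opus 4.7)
The proof splits naturally into two cases according to whether $C$ is a directed cycle or not. If $C = \vec{C}_k$ is a directed cycle, the bound follows immediately from the corollary $\mad_{\dic}(F) \le \mad_{\delta^0}(F) + 1$ (established just above) combined with the bound $\mad_{\delta^0}(\vec{C}_k) \le \mad_{\delta^+}(\vec{C}_k) = k - 1$ from the introduction, yielding $\mad_{\dic}(\vec{C}_k) \le k$. Hence the real content of the conjecture concerns non-directed oriented cycles, which have at least two blocks.

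For $C$ non-directed with blocks $B_1, \ldots, B_{2t}$ of lengths $\ell_1, \ldots, \ell_{2t}$ summing to $k := |C|$, the plan is to exhibit a subdivision of $C$ inside any digraph $D$ with $\dic(D) \ge k$. After restricting via Lemma~\ref{lem:dic-strong} to a strong $k$-$\dic$-critical subdigraph (in which $\delta^0 \ge k-1$ by the proposition above), I would build the subdivision block by block. Pick a vertex $v_1 \in V(D)$ to play the role of some 2-source of $C$ and iteratively realize $B_1, B_2, \ldots, B_{2t}$ as internally disjoint dipaths of the prescribed lengths and orientations, each time applying the out- and in-BFS-tree levelling used in the proof of Lemma~\ref{lem:recur-dic}. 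The essential ingredient needed for the plan to succeed is a tighter accounting than that lemma affords: closing one block $B_i$ should consume only $\ell_i$ units of dichromatic number rather than the factor-$4$ blowup per arc given by Lemma~\ref{lem:recur-dic}, so that after $2t$ iterations the total cost is $\sum_i \ell_i = |C|$.

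Concretely, the key technical step I would target is a refinement of Lemma~\ref{lem:dic-level} stating that, in a strong digraph $D$ with an in- or out-BFS-tree rooted at a chosen vertex $u$, consecutive bands of $\ell$ levels can be found carrying most of the dichromatic number. Such a statement would let me, at each block $B_i$, locate the next junction vertex $v_{i+1}$ at BFS-distance exactly $\ell_i$ from $v_i$ inside a strong subdigraph whose dichromatic number remains sufficient for the remaining blocks, thereby closing $B_i$ as a single BFS-tree dipath of length $\ell_i$. Alternating between out- and in-BFS-trees at 2-sources and 2-sinks of $C$ matches the alternating directions of the blocks.

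The main obstacle, in my view, is the base case $C = C(k_1, k_2)$ with exactly two blocks. Even here, the best bound via minimum out-degree arguments is $\mad_{\delta^+}(C(k_1,k_2)) \le 2(k_1+k_2)-1$ from Theorem~\ref{C(k,k)}, which combined with the $\delta^0$-corollary gives only $\mad_{\dic}(C(k_1,k_2)) \le 2(k_1+k_2)$, a factor two above the target. Closing this gap for $C(k_1, k_2)$ must genuinely exploit the dichromatic hypothesis beyond what the critical minimum out-degree provides, and is, I believe, the heart of the difficulty: any plan along the lines sketched above will stand or fall depending on whether the refined BFS-level dichromatic bound can actually be proved.
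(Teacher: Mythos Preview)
The statement you are addressing is a \emph{conjecture} in the paper, not a theorem: the paper does not prove it. What the paper does prove (in the corollary immediately preceding the conjecture) is the weaker bound $\mad_{\dic}(C)\le 2|V(C)|-3$, and then records $\mad_{\dic}(C)\le |C|$ as an open problem. So there is no paper proof to compare your proposal against.

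Your handling of the directed case is correct and matches what the paper's corollary already implies: $\mad_{\dic}(\vec C_k)\le \mad_{\delta^0}(\vec C_k)+1\le \mad_{\delta^+}(\vec C_k)+1=k$. For the non-directed case, however, your proposal is not a proof but a programme, and you yourself identify this honestly. The ``refined levelling lemma'' you posit --- that bands of $\ell$ consecutive BFS levels retain essentially all the dichromatic number minus $\ell$ --- is not established anywhere in the paper and is not obviously true; Lemma~\ref{lem:dic-level} only guarantees a single level with half the dichromatic number, and iterating it is precisely what produces the exponential loss in Lemma~\ref{lem:recur-dic}. Your observation that the two-block case $C(k_1,k_2)$ already resists the available tools (Theorem~\ref{C(k,k)} plus the $\delta^0$-corollary giving only $2(k_1+k_2)$) is exactly right and is presumably why the authors left this as a conjecture. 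In short: your directed-cycle argument is fine, but the rest is an outline of where the difficulty lies rather than a proof, and the paper offers no proof either.
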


\bibliographystyle{plain}
\bibliography{maderian,bibliography}

\end{document}